\patchcmd{\subsection}{-.5em}{.5em}{}{}
\patchcmd{\subsubsection}{-.5em}{.5em}{}{}
\numberwithin{equation}{section}
\newcommand{\cA}{\mathcal{A}}
\newcommand{\cC}{\mathcal{C}}
\newcommand{\cL}{\mathcal{L}}
\newcommand{\cP}{\mathcal{P}}
\newcommand{\cR}{\mathcal{R}}
\newcommand{\bN}{\mathbb{N}}
\newcommand{\bP}{\mathbb{P}}
\newcommand{\bR}{\mathbb{R}}
\newcommand{\bZ}{\mathbb{Z}}
\newcommand{\ra}{\rightarrow}
\newcommand{\qor}{\quad \textrm{or} \quad}
\newcommand{\qand}{\quad \textrm{and} \quad}
\newcommand\subsetsim{\mathrel{%
\ooalign{\raise0.2ex\hbox{$\subset$}\cr\hidewidth\raise-0.8ex\hbox{\scalebox{0.9}{$\sim$}}\hidewidth\cr}}}
\newcommand{\eps}{\varepsilon}
\DeclareMathOperator{\Hom}{Hom}
\DeclareMathOperator{\pr}{pr}
\DeclareMathOperator{\supp}{supp}
\DeclareMathOperator{\Prob}{Prob}
\DeclareMathOperator{\Z}{Z}
\DeclareMathOperator{\R}{\bR}
\renewcommand{\phi}{\varphi}
\definecolor{lichtgrijs}{gray}{0.95}
\theoremstyle{theorem}
\newtheorem{theorem}{Theorem}[section]
\newtheorem{corollary}[theorem]{Corollary}
\newtheorem{proposition}[theorem]{Proposition}
\newtheorem{lemma}[theorem]{Lemma}
\theoremstyle{task}
\theoremstyle{definition}
\newtheorem{definition}[theorem]{Definition}
\newtheorem{remark}[theorem]{Remark}
\newtheorem{construction}[theorem]{Construction}
\newtheorem{example}[theorem]{Example}
\newtheorem{setting}[theorem]{Setting}
\DeclareMathSymbol{\shortminus}{\mathbin}{AMSa}{"39}
\begin{document}
\bibliographystyle{plain} % Choose Phys. Rev. style for bibliography

\title[Hulls of quasimorphisms]{Dynamics on spaces of quasimorphisms and applications to approximate lattice theory}
%  Author I information
\author{Michael Bj\"orklund}
\address{Department of Mathematics, Chalmers, Gothenburg, Sweden}
\email{micbjo@chalmers.se}
\thanks{}

%    Author II information
\author{Tobias Hartnick}
\address{Institut f\"ur Algebra und Geometrie, KIT, Karlsruhe, Germany}
\curraddr{}
\email{tobias.hartnick@kit.edu}
\thanks{}

\begin{abstract} We study the dynamics of countable groups on their respective spaces of quasimorphisms. For cohomologically non-trivial quasimorphisms we show that there are no invariant measures and classify stationary measures. Within the equivalence class of any given quasimorphism we find both uniquely stationary orbit closures which are in fact boundaries and orbit closures with uncountably many ergodic stationary probability measures. We apply these results to study hulls of uniform approximate lattices which arise from twists by quasimorphisms. We show that these hulls do not admit invariant probability measures (extending results by Machado and Hrushovski) and classify stationary probability measures on these hulls.
\end{abstract}
\maketitle

\section{Introduction}
This article is concerned with the classification of stationary measures on spaces of quasimorphisms. It is motivated from a problem in the theory of approximate lattices, and our results have applications to Delone dynamical systems arising in this context.

\subsection{Uniform approximate lattices without invariant measures}
To state our original motivation we recall that a subset $\Lambda$ of a locally compact group $G$ is called an \emph{approximate subgroup} \cite{Tao} if it is symmetric, contains the identity and satisfies $\Lambda \Lambda \subset \Lambda F$ for some finite subset $F \subset G$. It is called a \emph{uniform approximate lattice} \cite{BH18} if it is moreover a Delone set in $G$, i.e.\ uniformly discrete and relatively dense. Typical examples of uniform approximate lattices include uniform lattices as well as a large class of mathematical quasicrystals known as Meyer set.

With every uniform lattice $\Gamma \subset G$ one can associate a dynamical system $G \curvearrowright G/\Gamma$, and this system admits a unique $G$-invariant probability measure. A natural generalization of this dynamical system to a uniform approximate lattice $\Lambda \subset G$ is the so called \emph{hull system} $G \curvearrowright \Omega_{\Lambda}$, where $\Omega_\Lambda$ denotes the orbit closure of $\Lambda$ in the Chabauty space of closed subsets of $G$. By analogy with the lattice case we asked in \cite{BH18} whether the hull of a uniform approximate lattice in $G$ always admits a $G$-invariant probability measure.

A negative answer to this question follows from recent work of Hrushovski and Machado. More precisely, Hrushovski provided in \cite[Section 7.9]{H20} an explicit example of a uniform approximate lattice which is non-laminar, and it was later shown by Machado \cite{Machado} that the hull of a non-laminar uniform approximate lattice cannot admit an invariant probability measure. While this provides examples of uniform approximate lattices without invariant measures, the proof of this fact in \cite{Machado} is very indirect. 

In the present article we provide further examples of uniform approximate lattices whose hulls do not admit invariant probability measures, based on the following construction.
\begin{definition}
Let $P \subset \R$ be a closed subset, let $\Gamma$ be a countable group and let $\phi: \Gamma \to \bR$ be a real-valued quasimorphism. Then the \emph{$\phi$-twist} of $P$ is defined as the subset of $\Gamma \times \bR$ given by
\[
P(\phi) := \{(\gamma, t) \in \Gamma \times \bR \mid \phi(\gamma) + t \in P\}.
\]
\end{definition}
By elementary ergodic theoretic methods we can show:
\begin{proposition}\label{PropIntroA} Assume that $P \subset \R$ is closed and that $\R$ acts freely on $\Omega_P$. If $\phi: \Gamma \to \R$ is a quasimorphism with $\phi(e) = 0$ which is not at bounded distance from a homomorphism, then the orbit closure $\Omega_{P(\phi)}$ does not admit a $(\Gamma \times \R)$-invariant probability measure.
\end{proposition}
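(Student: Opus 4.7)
The plan is to establish the stronger statement that $\Omega_{P(\phi)}$ admits no $\Gamma$-invariant probability measure (which of course rules out $(\Gamma \times \bR)$-invariant ones). The strategy is cohomological: from such a measure we will extract a homomorphism $\bar\phi: \Gamma \to \bR$ at bounded distance from $\phi$, contradicting the hypothesis. The key gadget is a slice-shift cocycle on $\Omega_{P(\phi)}$ whose $\mu$-average will be the desired homomorphism.

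For $Q \in \Omega_{P(\phi)}$ and $\gamma \in \Gamma$, let $\sigma_\gamma(Q) := Q \cap (\{\gamma\} \times \bR) \subset \bR$ denote the $\gamma$-slice. A direct computation on the orbit of $P(\phi)$ gives
\[
\sigma_\gamma((\gamma_0, t_0) \cdot P(\phi)) = P - \phi(\gamma_0^{-1}\gamma) + t_0;
\]
in particular, all slices of a given orbit element lie on a common $\bR$-orbit inside $\Omega_P$. Since $\bR$ acts freely on $\Omega_P$ (so, in particular, $\emptyset \notin \Omega_P$ and the shift between two slices is unambiguous), there is a unique $\phi_Q(\gamma) \in \bR$ with $\sigma_{\gamma^{-1}}(Q) = \sigma_e(Q) + \phi_Q(\gamma)$. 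On the orbit this simplifies to $\phi_Q(\gamma) = \phi(\gamma_0^{-1}) - \phi(\gamma_0^{-1}\gamma^{-1})$, and the quasimorphism inequality combined with $\phi(e) = 0$ gives the uniform estimate $|\phi_Q(\gamma) - \phi(\gamma)| \leq B$ for a constant $B$ depending only on the defect of $\phi$.

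This uniform bound, together with Chabauty-continuity of the slice map, lets us extend $Q \mapsto \phi_Q(\gamma)$ continuously from the orbit to all of $\Omega_{P(\phi)}$: for $Q_n \to Q$ with $Q_n$ in the orbit, the numbers $\phi_{Q_n}(\gamma)$ are trapped in $[\phi(\gamma) - B, \phi(\gamma) + B]$, and any accumulation point must realize the unique $\bR$-shift between the limiting slices $\sigma_e(Q)$ and $\sigma_{\gamma^{-1}}(Q)$. Writing $S_\gamma Q := (\gamma, 0) \cdot Q$ for the $\Gamma$-action, one checks $\sigma_e(S_\gamma Q) = \sigma_{\gamma^{-1}}(Q)$, which together with $S_{\gamma_1\gamma_2} = S_{\gamma_1} S_{\gamma_2}$ yields the cocycle relation
\[
\phi_Q(\gamma_1 \gamma_2) = \phi_Q(\gamma_2) + \phi_{S_{\gamma_2} Q}(\gamma_1).
\]

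Given a $\Gamma$-invariant probability measure $\mu$, set $\bar\phi(\gamma) := \int_{\Omega_{P(\phi)}} \phi_Q(\gamma)\, d\mu(Q)$; this is well-defined since $\phi_Q(\gamma)$ lies in a bounded interval. Integrating the cocycle identity and using the invariance of $\mu$ under $S_{\gamma_2}$ to rewrite $\int \phi_{S_{\gamma_2}Q}(\gamma_1)\, d\mu(Q) = \bar\phi(\gamma_1)$, we obtain $\bar\phi(\gamma_1\gamma_2) = \bar\phi(\gamma_2) + \bar\phi(\gamma_1)$, so $\bar\phi$ is a homomorphism; the pointwise bound integrates to $|\bar\phi(\gamma) - \phi(\gamma)| \leq B$ for all $\gamma$, contradicting the hypothesis on $\phi$. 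The main obstacle is the extension step, where one must exclude degenerations of slices under Chabauty limits (empty slices, the limiting slices landing on different $\bR$-orbits, or the shift becoming ill-defined). Each of these is precluded by the freeness of the $\bR$-action on $\Omega_P$ together with the uniform bound on the orbit, after which the remaining steps are essentially formal.
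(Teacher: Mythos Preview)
Your argument is correct and is essentially the paper's proof carried out in one pass rather than modularly. The paper factors the result into two pieces: Proposition~\ref{PropTrivial} (the quasimorphism hull $\Delta_\phi \subset \mathscr{F}_o(\Gamma)$ carries no $\Gamma$-invariant probability measure, by exactly the averaging-to-a-homomorphism trick you use) and Proposition~\ref{PropA} (the hull $\Omega_{P(\phi)}$ is $(\Gamma\times\bR)$-equivariantly homeomorphic to the skew product $\Delta_\phi \times_c \Omega_P$, hence has $\Delta_\phi$ as a continuous $\Gamma$-factor). Your slice-shift assignment $Q \mapsto \phi_Q$ is precisely (up to a sign and an inversion in the argument) the first coordinate of the inverse of the skew-product homeomorphism, i.e.\ the factor map $\Omega_{P(\phi)} \to \Delta_\phi$; your cocycle relation is the canonical cocycle $c(\gamma,\varphi)=\varphi(\gamma)$ on $\Delta_\phi$ pulled back; and your integration step is verbatim the proof of Proposition~\ref{QMTrivialities}(iii).

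The only place your write-up is genuinely more hands-on is the extension step, where you argue directly via Chabauty limits of slices that $\phi_Q$ is defined and continuous on the whole hull. The paper instead proves the full homeomorphism $\Delta_\phi \times_c \Omega_P \cong \Omega_{P(\phi)}$ (Proposition~\ref{SkewRealization} and Corollary~\ref{Critb}), which yields the factor map for free but requires a bit more infrastructure. Your direct route is perfectly adequate for the proposition at hand; the paper's modular decomposition pays off later, since the same skew-product structure is reused to classify stationary measures (Proposition~\ref{PropB} and Theorem~\ref{FinalThm}).
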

It is easy to see that if $P \subset \R$ happens to be a uniform approximate lattice and $\phi: \Gamma \to \Z$ is integer-valued and anti-symmetric, then $P(\phi)$ is a uniform approximate lattice in $\Gamma \times \bR$ (see Corollary \ref{GetUAL}). This produces plenty of examples of uniform approximate lattices whose hulls do not support invariant probability measure and motivates a more detailed study of 
dynamical systems of the form $\Gamma \times \R \curvearrowright \Omega_{P(\phi)}$.

\subsection{Stationary measures on hulls of twist sets}\label{StaMeas}
For the following results, let $\Gamma$ be a countable group, let $P \subset \bR$ be a closed subset such that the action of $\R$ on $\Omega_{\R}$ is free and uniquely ergodic. Moreover, let $\phi: \Gamma \to \bR$ be a quasimorphism with $\phi(e) = 0$, which is not at bounded distance from a homomorphism. 

We fix an absolutely continuous symmetric probability measure $p$ on $\Gamma \times \R$ such that $\phi$ is $p$-integrable. We then denote by $p_\Gamma$ the pushforwards of $p$ to $\Gamma$ and consider the space $\Prob_p(\Omega_{P(\phi)})$ of $p$-stationary probability measures on $\Omega_{P(\phi)}$. We write $\mathcal L_p(\phi)$ for the space of left-$p$-harmonic quasimorphisms which are at bounded distance from $\phi$. 

\begin{theorem}\label{ThmB} If $\phi$ is left-$p$-harmonic, then $\Omega_{P(\phi)}$ admits a unique $p$-stationary probability measure $\nu$ and $(\Omega_{P(\phi)}, \nu)$ is a relatively measure-preserving extension over a non-trivial $(\Gamma, p_\Gamma)$-boundary.
\end{theorem}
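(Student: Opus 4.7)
The plan is to realize $(\Omega_{P(\phi)}, \Gamma \times \bR)$ as an extension of a boundary built from harmonic quasimorphisms, with fibers governed by the unique ergodicity of the $\bR$-action on $\Omega_P$. The first step is an explicit computation in the Chabauty space: writing the group as a direct product, the translate $(\gamma_0, t_0) \cdot P(\phi)$ equals $P^{(s)}(\tilde\psi)$ with $\tilde\psi(\gamma) := \phi(\gamma_0^{-1}\gamma) - \phi(\gamma_0^{-1})$, $s := \phi(\gamma_0^{-1}) - t_0$, and $P^{(s)} := P - s$. Here $\tilde\psi$ is the left translate of $\phi$ normalized to vanish at $e$, and the defect bound for $\phi$ yields $\|\tilde\psi - \phi\|_\infty \le 2D$. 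Thus every point in the $(\Gamma \times \bR)$-orbit of $P(\phi)$ encodes a left translate of $\phi$ together with a shifted copy of $P$, and Chabauty limits encode the same data up to bounded perturbation.

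Next I would construct a $(\Gamma \times \bR)$-equivariant continuous factor map $\pi : \Omega_{P(\phi)} \to \cL_p(\phi)$ by passing to the limit in the quasimorphism component. Since the $\bR$-factor only shifts $P$, it acts trivially on the target, and $\pi$ descends to a $\Gamma$-equivariant map whose image lies in the space of left-$p$-harmonic quasimorphisms at bounded distance from $\phi$. By the theory of dynamics on spaces of quasimorphisms developed in the body of the paper, the hypothesis that $\phi$ is left-$p$-harmonic equips $\cL_p(\phi)$ with a canonical $p_\Gamma$-stationary probability measure $\nu_B$, making $(\cL_p(\phi), \nu_B)$ a $(\Gamma, p_\Gamma)$-boundary; non-triviality follows from $\phi$ not being at bounded distance from a homomorphism, as the opposite hypothesis would force $\cL_p(\phi)$ to collapse to a point.

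For the fiber analysis, the identification $\pi^{-1}(\psi) \cong \Omega_P$ (via the $\bR$-action) is immediate from the orbit description above. Let $\nu$ be any $p$-stationary probability measure on $\Omega_{P(\phi)}$. Disintegrating $\nu = \int \nu_\psi \, d(\pi_*\nu)(\psi)$ over $\pi$, I would argue that each fiber measure $\nu_\psi$ must be $\bR$-invariant: the $\bR$-marginal of $p$ fixes $\psi$ while acting on the fiber by translation, so stationarity of $\nu$ together with $p$-invariance of the disintegration forces $\nu_\psi$ to be invariant under this $\bR$-action. Unique ergodicity of $\bR \curvearrowright \Omega_P$ then yields $\nu_\psi = \mu$ almost surely, which simultaneously gives the relatively measure-preserving property of $\pi$ and determines $\nu$ uniquely from $\pi_*\nu$. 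Since $(\cL_p(\phi), \nu_B)$ is a $(\Gamma, p_\Gamma)$-boundary, any $p_\Gamma$-stationary measure on it coincides with $\nu_B$, so $\pi_*\nu = \nu_B$, and uniqueness of $\nu$ follows.

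The main technical obstacle will be making the construction of $\pi$ fully rigorous: verifying that the limit quasimorphism is well-defined and continuous under the Chabauty topology, and that its target $\cL_p(\phi)$ really is a $(\Gamma, p_\Gamma)$-boundary with a unique $p_\Gamma$-stationary measure. Both points rely on the earlier development of harmonic quasimorphism spaces. In particular, separating the quasimorphism component from the $P$-component in a Chabauty limit of the form $P^{(s_n)}(\tilde\psi_n)$ requires careful use of the defect estimate together with the free $\bR$-action on $\Omega_P$, since the shifts $s_n$ and the normalization constants $\phi(\gamma_n^{-1})$ enter symmetrically and must be disentangled.
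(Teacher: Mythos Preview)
Your overall strategy coincides with the paper's: realize $\Omega_{P(\phi)}$ as a skew product $\Delta_\phi \times_c \Omega_P$ (the paper's Proposition~\ref{PropA}/\ref{SkewRealization}), push any $\widetilde p$-stationary measure down to the base and across the fibers, use unique ergodicity of $\bR \curvearrowright \Omega_P$ to pin down the fiber measures (Lemma~\ref{StationaryDown}), and invoke the result that the hull of a left-$p$-harmonic quasimorphism is a uniquely $p$-stationary non-trivial boundary (Corollary~\ref{LeftHarmonicCase}). So the architecture is right.

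There is, however, a genuine error in your orbit computation that breaks the link to $\cL_p(\phi)$. You use the left action on the Chabauty space, which produces $\tilde\psi(\gamma)=\phi(\gamma_0^{-1}\gamma)-\phi(\gamma_0^{-1})$. But left translation in the argument does \emph{not} preserve left-$p$-harmonicity: from $p*\phi=\phi$ you cannot conclude $\sum_\eta p(\eta)\phi(\gamma_0^{-1}\eta^{-1}\gamma)=\phi(\gamma_0^{-1}\gamma)$, because the $\eta^{-1}$ sits between $\gamma_0^{-1}$ and $\gamma$ rather than on the left. Hence your claim that the image of $\pi$ lands in $\cL_p(\phi)$ is unjustified with this action. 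The paper avoids this by using the action $g.P=Pg^{-1}$ on $\mathscr C(\Gamma\times\bR)$, which induces $(\gamma_0.\phi)(\gamma)=\phi(\gamma\gamma_0)-\phi(\gamma_0)$ on the quasimorphism factor; right translation in the argument commutes with left convolution by $p$, so left-harmonicity is preserved and the factor map genuinely targets $\Delta_\phi\subset\cL_p(\phi)$. Once you switch to this action your argument goes through and is essentially the paper's.

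A minor point: you deduce unique stationarity of the base from the boundary property. This is a true general fact about $(\Gamma,p)$-boundaries, but the paper does not rely on it; unique stationarity of $\Delta_\phi$ is proved independently via an ergodic-joining argument (Lemma~\ref{EtaJoining}), and the boundary property is established separately via the Poisson transform (Lemma~\ref{RightHarmonic}). Either route is fine.
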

\begin{theorem}\label{ThmC} If $\psi\in \mathcal L_{p}(\phi)$ and $\nu_\psi$ denotes the unique $p$-stationary probability measure on $\Omega_{P(\psi)}$, then for every $\nu \in \Prob_p(\Omega_{P(\phi)})$ there exists a measure-preserving $\nu$-measurable map $(\Omega_{P(\phi)}, \nu) \to (\Omega_{P(\psi)}, \nu_\psi)$. 
\end{theorem}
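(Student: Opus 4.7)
The plan is to relate $(\Omega_{P(\phi)}, \nu)$ to $(\Omega_{P(\psi)}, \nu_\psi)$ through the natural equivariant joining arising from the bounded defect $b := \phi - \psi \in \ell^\infty(\Gamma)$, and to extract the desired map by exploiting the uniqueness of the $p$-stationary measure on $\Omega_{P(\psi)}$ afforded by Theorem~\ref{ThmB}. Concretely, I would first form the $(\Gamma \times \bR)$-orbit closure
\[
Z := \overline{(\Gamma \times \bR) \cdot (P(\phi), P(\psi))} \subset \Omega_{P(\phi)} \times \Omega_{P(\psi)},
\]
with its coordinate projections $\pi_\phi$ and $\pi_\psi$, which are continuous, equivariant and surjective onto the two hulls. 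A direct check using the boundedness of $b$ shows that each fiber $\pi_\phi^{-1}(S)$ consists of vertical perturbations of $S$ of the form $\{(\gamma, t + e(\gamma)) : (\gamma, t) \in S\}$, where $e$ ranges over a compact set of bounded functions on $\Gamma$ built from $\Gamma$-translates of $b$ in the topology of pointwise convergence.

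Second, given $\nu \in \Prob_p(\Omega_{P(\phi)})$, I would lift it to a $p$-stationary probability measure $\tilde\nu$ on $Z$ with $\pi_{\phi*}\tilde\nu = \nu$ via a standard Markov--Kakutani argument: the nonempty weak-$\ast$ compact convex set of probability measures on $Z$ projecting under $\pi_\phi$ to $\nu$ is stable under $p$-convolution (because $\pi_\phi$ is equivariant and $\nu$ is $p$-stationary), so it contains a $p$-stationary fixed point. The pushforward $\pi_{\psi*}\tilde\nu$ is then a $p$-stationary measure on $\Omega_{P(\psi)}$, and applying Theorem~\ref{ThmB} to the left-$p$-harmonic $\psi$ forces $\pi_{\psi*}\tilde\nu = \nu_\psi$.

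The main obstacle is then to show that $\pi_\phi : (Z, \tilde\nu) \to (\Omega_{P(\phi)}, \nu)$ is essentially injective, i.e.\ that in the disintegration $\tilde\nu = \int \tilde\nu_S \, d\nu(S)$ each conditional measure $\tilde\nu_S$ is a Dirac mass $\delta_{F(S)}$ for $\nu$-a.e.\ $S$. My plan is to combine three ingredients: (i) the natural $\bR$-equivariant slice map $\Omega_{P(\phi)} \to \Omega_P$ that reads off the $\bR$-slice over the $\Gamma$-identity, which combined with the free and uniquely ergodic $\bR$-action on $\Omega_P$ pins down the vertical shift of each fiber element relative to $S$; (ii) the $(\Gamma \times \bR)$-equivariance of the disintegration, which propagates this control along the $\Gamma$-action on the parameter space and pins down the full function $e$; and (iii) the relatively measure-preserving extension of $(\Omega_{P(\psi)}, \nu_\psi)$ over a non-trivial $(\Gamma, p_\Gamma)$-boundary from Theorem~\ref{ThmB}, which supplies the rigidity needed to rule out non-trivial fiberwise randomness in the remaining coordinates.

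Granted this rigidity, the map $F : (\Omega_{P(\phi)}, \nu) \to (\Omega_{P(\psi)}, \nu_\psi)$ defined by $\tilde\nu_S = \delta_{F(S)}$ is $\nu$-measurable by construction, measure-preserving since $F_*\nu = \pi_{\psi*}\tilde\nu = \nu_\psi$, and $(\Gamma \times \bR)$-equivariant modulo $\nu$-null sets as a consequence of the $p$-stationarity of $\tilde\nu$ together with the absolute continuity of $p$, which completes the proof.
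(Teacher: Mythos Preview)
Your joining strategy is natural up through step 3, but step 4 has a genuine gap: you have not shown, and the listed ingredients do not show, that the conditional measures $\tilde\nu_S$ are Dirac. Ingredient (iii) is the only one that could conceivably supply ``rigidity'', but Theorem~\ref{ThmB} merely asserts that $(\Omega_{P(\psi)},\nu_\psi)$ is a relatively measure-preserving extension of some nontrivial boundary; this does not rule out nontrivial stationary joinings over $(\Omega_{P(\phi)},\nu)$. The paper does prove a statement of the flavour you want (Lemma~\ref{EtaJoining}): an ergodic stationary joining between two hulls $\Delta_1,\Delta_2\subset\cL_p(\phi_o)$ is supported on the diagonal. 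But that proof uses crucially that \emph{both} coordinates are left-$p$-harmonic: the bounded cocycle $\beta(\gamma,(\psi_1,\psi_2))=\psi_1(\gamma)-\psi_2(\gamma)$ is trivialized by a bounded transfer function $u$ via Zimmer, and left-harmonicity of $\psi_1-\psi_2$ then forces $u$ to be $p$-harmonic, hence essentially constant. In your situation $\phi$ is not assumed left-harmonic, so elements of the first factor are not harmonic and this argument is unavailable. Ingredients (i) and (ii) at best read off the $\Omega_P$-coordinate of a given pair $(S,S')\in Z$; they do not determine $S'$ from $S$ alone.

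The paper's route is quite different and does not pass through joinings. It first uses the skew-product decomposition (Propositions~\ref{PropA} and~\ref{PropB}) to reduce to constructing a $\Gamma$-equivariant measurable map $(\Delta_\phi,\nu_\Delta)\to(\Delta_\psi,\nu_\psi')$ between quasimorphism hulls, and then constructs this map \emph{explicitly} (Theorem~\ref{Theorem_Factor}) by harmonic averaging: for a Mokobodski mean $\mathfrak m$ one sets
\[
\pi(\varphi)(\gamma)=\int_{\bN}\sum_{\gamma'\in\Gamma}p^{*n}(\gamma')\bigl(\varphi(\gamma'\gamma)-\varphi(\gamma')\bigr)\,d\mathfrak m(n),
\]
and checks directly that $\pi(\varphi)$ is left-$p$-harmonic and at bounded distance from $\phi_o$, hence lands in $\cC_p(\phi_o)\subset\Delta_\psi$. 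Unique stationarity of $\Delta_\psi$ then forces $\pi_*\nu_\Delta$ to be the correct measure. The idea missing from your approach is precisely this explicit averaging construction, which produces a deterministic map rather than a mere joining.
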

\begin{corollary} If $\nu \in \Prob_p(\Omega_{P(\phi)})$, then $(\Omega_{P(\phi)}, \nu)$ admits a measurable $(\Gamma \times \bR)$-factor, which is a non-trivial $(\Gamma, p_\Gamma)$-boundary.
\end{corollary}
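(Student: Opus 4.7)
The plan is to chain Theorems \ref{ThmB} and \ref{ThmC}. First I would pick a left-$p$-harmonic representative $\psi \in \mathcal L_p(\phi)$; the non-emptiness of $\mathcal L_p(\phi)$ for a $p$-integrable quasimorphism is the standard harmonic-replacement construction, which the paper presumably establishes prior to these theorems. Since $\psi - \phi$ is bounded and $\phi$ is not at bounded distance from any homomorphism, neither is $\psi$, so Theorem \ref{ThmB} applies to $\psi$ and yields a unique $p$-stationary measure $\nu_\psi$ on $\Omega_{P(\psi)}$ together with a $(\Gamma\times\bR)$-equivariant factor map
\[
\pi_\psi : (\Omega_{P(\psi)}, \nu_\psi) \longrightarrow (B, \beta)
\]
onto a non-trivial $(\Gamma, p_\Gamma)$-boundary $(B,\beta)$. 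Because $\pi_\psi$ is $(\Gamma \times \bR)$-equivariant and $(B,\beta)$ is a $(\Gamma, p_\Gamma)$-boundary, the normal subgroup $\bR$ must act trivially on $(B,\beta)$, so the latter genuinely carries a compatible $(\Gamma\times\bR)$-action with respect to which $\pi_\psi$ is equivariant.

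Next, given $\nu \in \Prob_p(\Omega_{P(\phi)})$, Theorem \ref{ThmC} provides a measure-preserving, $(\Gamma\times\bR)$-equivariant, $\nu$-measurable map
\[
\pi : (\Omega_{P(\phi)}, \nu) \longrightarrow (\Omega_{P(\psi)}, \nu_\psi).
\]
I then form $\pi_\psi \circ \pi$; this composition is defined $\nu$-a.e.\ since $\pi_* \nu = \nu_\psi$ and $\pi_\psi$ is defined $\nu_\psi$-a.e., it is $(\Gamma\times\bR)$-equivariant as a composition of equivariant maps, and it pushes $\nu$ forward to $\beta$. Hence $\pi_\psi \circ \pi$ exhibits $(B,\beta)$ as a measurable $(\Gamma\times\bR)$-factor of $(\Omega_{P(\phi)}, \nu)$, and $(B,\beta)$ is a non-trivial $(\Gamma, p_\Gamma)$-boundary by Theorem \ref{ThmB}.

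The corollary is essentially a transitivity-of-factors statement, so once Theorems \ref{ThmB} and \ref{ThmC} are in hand no serious obstacle remains. The only external input required is the non-emptiness of $\mathcal L_p(\phi)$ for a $p$-integrable $\phi$, which I would expect to be addressed by an averaging or conditional-expectation argument in an earlier section of the paper.
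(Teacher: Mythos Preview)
Your proposal is correct and follows exactly the approach implicit in the paper: the corollary is placed immediately after Theorems \ref{ThmB} and \ref{ThmC} with no separate proof, as it is simply the composition of the two factor maps they provide. The non-emptiness of $\mathcal L_p(\phi)$ that you anticipate needing is indeed established earlier in the paper (Proposition \ref{ProduceHarmonic} and Remark \ref{Rmk.BiHarm}).
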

\begin{theorem}\label{ThmD} There exists a quasimorphism $\phi'$ at bounded distance from $\phi$ such that $\Omega_{P(\phi')}$ admits uncountably many ergodic $p$-stationary probability measures.
\end{theorem}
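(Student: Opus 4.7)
The plan is to construct $\phi'$ so that its hull $\Omega_{P(\phi')}$ contains, as closed $(\Gamma \times \R)$-invariant subsets, the hulls $\Omega_{P(\psi_\alpha)}$ for an uncountable family of pairwise-distinct harmonic representatives $\psi_\alpha \in \mathcal L_p(\phi)$. Since each $\psi_\alpha$ is harmonic, Theorem \ref{ThmB} provides a unique ergodic $p$-stationary probability measure $\nu_{\psi_\alpha}$ on $\Omega_{P(\psi_\alpha)}$; viewed as measures on $\Omega_{P(\phi')}$ these will be pairwise distinct ergodic $p$-stationary measures, yielding the desired uncountable family.

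First, I would record the explicit $(\Gamma \times \R)$-action on quasimorphism hulls, namely
\[
(\gamma_0, t_0) \cdot P(\psi) = P\bigl(\psi(\gamma_0^{-1}\,\cdot\,) - t_0\bigr),
\]
so that elements of $\Omega_{P(\phi')}$ correspond to Chabauty limits of pointwise limits of normalized left-translates of $\phi'$; consequently $P(\psi)$ lies in $\Omega_{P(\phi')}$ as soon as $\psi$ arises as such a limit. Second, I would produce an uncountable family $(\psi_\alpha) \subset \mathcal L_p(\phi)$ at uniformly bounded distance from a fixed reference harmonic representative $\phi_0$, pairwise distinct modulo translation by constants in $\R$. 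This step relies on the non-triviality of the Poisson boundary of $(\Gamma,p_\Gamma)$ (implicit in the setting of Theorem \ref{ThmB}, since otherwise $\phi$ would be at bounded distance from a homomorphism), which supplies an uncountable-dimensional space of bounded harmonic functions that can be added to $\phi_0$.

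The core step is a diagonal/gluing construction. Choose a countable subfamily $(\psi_{\alpha_n})$ that is dense (in pointwise convergence) in the uncountable family, together with basepoints $\gamma_n \in \Gamma$ and rapidly growing finite sets $F_n \subset \Gamma$ such that the translates $\gamma_n F_n$ are pairwise disjoint and widely separated. Define
\[
\phi'(\gamma) := \begin{cases} \psi_{\alpha_n}(\gamma) - \psi_{\alpha_n}(\gamma_n) + \phi_0(\gamma_n), & \gamma \in \gamma_n F_n, \\ \phi_0(\gamma), & \text{otherwise}.\end{cases}
\]
Using the uniform bound $\|\psi_\alpha - \phi_0\|_\infty \le C$ together with the quasimorphism defects, one verifies that $\phi'$ is a quasimorphism at uniformly bounded distance from $\phi_0$, hence from $\phi$. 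Then one checks that for each $n$ the normalized translate $(\gamma_n, \phi'(\gamma_n^{-1})) \cdot P(\phi')$ converges in the Chabauty topology to $P(\psi_{\alpha_n})$, so that by density every $P(\psi_\alpha)$ lies in $\Omega_{P(\phi')}$, and so therefore does its entire orbit closure $\Omega_{P(\psi_\alpha)}$.

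The main obstacle is balancing the three competing constraints of this construction: (i) $\phi'$ must remain a quasimorphism of uniformly bounded defect and uniformly bounded distance from $\phi$, which restricts how abruptly the gluing can transition between pieces; (ii) $F_n$ must grow quickly enough that Chabauty limits centred at $\gamma_n$ faithfully reproduce $P(\psi_{\alpha_n})$ on every compact window; and (iii) the measures $\nu_{\psi_\alpha}$ must be genuinely distinct ergodic measures on $\Omega_{P(\phi')}$, not merely distinct measures on distinct subsystems. The third point is the most delicate, and I would handle it via Theorem \ref{ThmC}: any coincidence $\nu_{\psi_\alpha} = \nu_{\psi_\beta}$ would give, after pushing forward along the factor maps from Theorem \ref{ThmC}, a measure-theoretic identification of $\Omega_{P(\psi_\alpha)}$ and $\Omega_{P(\psi_\beta)}$ forcing $\psi_\alpha$ and $\psi_\beta$ to lie in the same $\R$-translation class, contradicting the choice of the family $(\psi_\alpha)$.
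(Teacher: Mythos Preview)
Your approach has a fundamental gap: the uncountably many measures you construct all coincide. By Theorem~\ref{QuasiMain} (the result underlying Theorem~\ref{ThmB}), for \emph{every} $\psi_\alpha \in \mathcal L_p(\phi)$ the unique $p$-stationary probability measure on $\Delta_{\psi_\alpha}$ is supported on the left-$p$-harmonic core $\mathcal C_p(\phi) = \bigcap_{\psi \in \mathcal L_p(\phi)} \Delta_\psi$, and it is the \emph{same} measure $\nu_\phi$ for every $\alpha$. Passing to the hull level, the homeomorphism $(\varphi, Q) \mapsto \{(\gamma, t): \varphi(\gamma)+t\in Q\}$ from Proposition~\ref{SkewRealization} does not depend on the basepoint $\psi_\alpha$, so the unique $p$-stationary measure on each $\Omega_{P(\psi_\alpha)}$ is the pushforward of $\nu_\phi \otimes \theta$ along one fixed map. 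These measures are therefore literally identical as measures on $\mathscr C(\Gamma\times\R)$, and in particular on $\Omega_{P(\phi')}$. Your step~(iii) cannot be rescued by Theorem~\ref{ThmC}: the problem is not that two measures agree after passing to a factor, but that they are equal to begin with. In short, harmonic representatives are exactly the wrong source of multiplicity---the paper's own structure theory forces them all to see the same stationary measure.

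The paper proceeds in a completely different direction. Rather than embedding many uniquely-stationary subsystems (which, as above, cannot work), it modifies $\phi$ by adding a bounded function $s_o$ with values in $\{-1,0,1\}$, after first arranging $\phi$ to take values in $3\bZ$ so that the sum can be uniquely decomposed and $\Delta_{\phi+s_o}$ factors continuously onto $\Delta_{s_o}$. The function $s_o$ is built from a subset $A\subset\Gamma$ with dense $\Gamma$-orbit in $2^\Gamma$, so that $\Delta_{s_o}$ contains an embedded copy of $2^\Gamma_*$; the full shift carries $2^{\aleph_0}$ ergodic $\Gamma$-\emph{invariant} (hence $p$-stationary) probability measures. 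The abundance of measures thus comes from a Bernoulli subshift, not from boundary theory.
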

If $\Gamma$ surjects onto $\bZ$, then the quasimorphism $\phi'$ in Theorem \ref{ThmD} can be chosen to be antisymmetric and to take integer values. This implies:
\begin{theorem}\label{ThmE} If $\Gamma$ surjects onto $\bZ$, then there exists a uniform approximate lattice in $\Gamma \times \R$ which admits uncountably many ergodic $p$-stationary probabiliy measures, but no $(\Gamma \times \R)$-invariant probability measure. In particular, there exists such a uniform approximate lattice in $\mathrm{GL}_2(\R)$.
\end{theorem}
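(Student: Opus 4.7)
The plan is as follows. Starting with any countable group $\Gamma$ satisfying the standing assumptions of Subsection \ref{StaMeas} and surjecting onto $\bZ$, I would first apply Theorem \ref{ThmD} to produce a quasimorphism $\phi'$ at bounded distance from $\phi$ such that $\Omega_{P(\phi')}$ carries uncountably many ergodic $p$-stationary probability measures. The surjection $\Gamma \twoheadrightarrow \bZ$ would then be used, exactly as in the strengthened form of Theorem \ref{ThmD} flagged in the paragraph preceding the statement, to modify $\phi'$ inside its bounded-distance class so that it is simultaneously antisymmetric and integer-valued, without disturbing the conclusion of Theorem \ref{ThmD}. With such a $\phi'$ in hand, Corollary \ref{GetUAL} promotes $P(\phi')$ to a uniform approximate lattice in $\Gamma \times \R$, while Proposition \ref{PropIntroA}, applied to $\phi'$ (which remains not at bounded distance from any homomorphism because $\phi$ is not), rules out the existence of a $(\Gamma \times \R)$-invariant probability measure on its hull. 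This settles the first sentence of the theorem.

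For the $\mathrm{GL}_2(\R)$ assertion I would specialize $\Gamma$ to a torsion-free cocompact lattice in $\mathrm{SL}_2(\R)$, for instance a lift of the surface group $\pi_1(\Sigma_g)$ with $g \ge 2$. Such a $\Gamma$ is Gromov hyperbolic, hence admits an abundance of $\R$-valued quasimorphisms not at bounded distance from any homomorphism, and surjects onto $\bZ$ via the abelianization, so the first part applies. Via the direct-product decomposition $\mathrm{GL}_2^+(\R) \cong \mathrm{SL}_2(\R) \times \R_{>0}$ and the exponential $\R \cong \R_{>0}$, the subgroup $\Gamma \times \R$ sits as a closed cocompact subgroup of $\mathrm{GL}_2(\R)$. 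Since compact subsets of $\mathrm{GL}_2(\R)$ are covered by finitely many $(\Gamma \times \R)$-translates of a fixed compact transversal, the uniform approximate lattice $P(\phi') \subset \Gamma \times \R$ is automatically Delone in $\mathrm{GL}_2(\R)$, and hence a uniform approximate lattice there.

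The main technical obstacle is the transfer of the dynamical statements from the $(\Gamma \times \R)$-action to the $\mathrm{GL}_2(\R)$-action on the larger hull. I would identify
\begin{equation*}
\Omega_{P(\phi'),\,\mathrm{GL}_2(\R)} \;\cong\; \mathrm{GL}_2(\R) \times_{\Gamma \times \R} \Omega_{P(\phi'),\,\Gamma \times \R}
\end{equation*}
as $\mathrm{GL}_2(\R)$-spaces, using the induction of actions along the cocompact inclusion. A $\mathrm{GL}_2(\R)$-invariant probability measure on the left then disintegrates over the compact homogeneous base $\mathrm{GL}_2(\R)/(\Gamma \times \R)$ into a measurable family of $(\Gamma \times \R)$-invariant probability measures on the fiber, contradicting the first part. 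Dually, for a symmetric absolutely continuous probability measure $q$ on $\mathrm{GL}_2(\R)$ whose push-forward through a suitable section projects to the measure $p$ on $\Gamma \times \R$, ergodic $q$-stationary measures on the induced space should correspond bijectively to ergodic $p$-stationary measures on the fiber, so the uncountable family provided by Theorem \ref{ThmD} lifts to an uncountable family on $\Omega_{P(\phi'),\,\mathrm{GL}_2(\R)}$. Carrying out this induction argument cleanly, and in particular choosing $q$ so that the stationary-measure correspondence is exact, is where I expect the main work to lie.
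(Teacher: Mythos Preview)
Your argument for the first sentence is exactly the paper's: it combines the antisymmetric, integer-valued strengthening of Theorem~\ref{ThmD} (equivalently Theorem~\ref{ThmF}) with Corollary~\ref{GetUAL} and Proposition~\ref{PropIntroA}, using Propositions~\ref{PropA} and~\ref{PropB} to pass between $\Delta_{\phi'}$ and $\Omega_{P(\phi')}$.

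For the $\mathrm{GL}_2(\R)$ clause the paper is much more laconic than you are. It simply specializes $\Gamma$ to a surface group $\Gamma_g$, invokes Corollary~\ref{GetUAL} with $G=\mathrm{SL}_2(\R)$ to get that $P(\phi')$ is a uniform approximate lattice in $\mathrm{SL}_2(\R)\times\R\cong\mathrm{GL}_2^+(\R)$ (hence in $\mathrm{GL}_2(\R)$), and leaves it at that; no induced-action machinery is set up, and no separate measure $q$ on $\mathrm{GL}_2(\R)$ is introduced. Your identification $\Omega_{P(\phi'),\mathrm{GL}_2(\R)}\cong\mathrm{GL}_2(\R)\times_{\Gamma\times\R}\Omega_{P(\phi'),\Gamma\times\R}$ and the disintegration argument for the non-existence of $\mathrm{GL}_2(\R)$-invariant measures are correct and more honest than the paper's one-liner. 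Where you should be cautious is the stationary side: there is no general bijection between $q$-stationary measures on an induced system and $p$-stationary measures on the fiber for an arbitrary absolutely continuous $q$, and asking that $q$ ``project to $p$ through a section'' is not a well-posed condition. If you want a clean statement at the $\mathrm{GL}_2(\R)$ level, it is easier to argue directly that the factor map to $\Delta_{\phi'}$ persists (via Proposition~\ref{PropA} and the induced picture) and then produce uncountably many ergodic stationary measures by lifting from the $2^{\Gamma}$-factor constructed in Theorem~\ref{ThmF}, rather than by trying to match $q$ to $p$.
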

All of these results follow readily from corresponding results about dynamics of quasimorphisms, which we now explain.
\subsection{Relation to dynamics of quasimorphisms}
Let $\Gamma$ be a countable group and equip $\mathscr F_o(\Gamma) := \{\phi \in \R^{\Gamma} \mid \phi(e) = 0\}$ with the product topology. Then $\Gamma$ acts continuously on $\mathscr F_o(\Gamma)$ via
\begin{equation}\label{ActionOfGamma}
(\gamma.\varphi)(\gamma') = \varphi(\gamma' \gamma) - \varphi(\gamma), \quad \gamma,\gamma' \in \Gamma, \enskip \varphi \in \mathscr{F}_o(\Gamma).
\end{equation}
We now fix a quasimorphism $\phi_o \in \mathscr F_o(\Gamma)$ which is not at bounded distance from a homomorphism. We then denote by $[\phi_o]$ the set of all quasimorphisms $\phi \in \mathscr F_o(\Gamma)$ which are at bounded distances from $\phi_o$ and by $\Delta_{\phi_o}$ the orbit closure of $\phi_o$ in $\mathscr F_o(\Gamma)$. We then have the following elementary observation:
\begin{proposition}\label{PropTrivial}
$\Delta_{\phi_o}$ is a compact metrizable subset of $[\phi_o]$, which does not admit any $\Gamma$-invariant probability measure.
\end{proposition}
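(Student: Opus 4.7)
The plan is to observe that the $\Gamma$-action on $\mathscr F_o(\Gamma)$ is essentially by bounded perturbations. The key point is the identity
\[
(\gamma.\phi_o)(\gamma') - \phi_o(\gamma') \;=\; \phi_o(\gamma'\gamma) - \phi_o(\gamma') - \phi_o(\gamma),
\]
whose right-hand side is bounded in absolute value by the defect $D(\phi_o)$ of $\phi_o$. Hence every orbit point $\gamma.\phi_o$ lies in the box $B := \prod_{\gamma' \in \Gamma} [\phi_o(\gamma') - D(\phi_o),\, \phi_o(\gamma') + D(\phi_o)]$, which is compact by Tychonoff and metrizable since $\Gamma$ is countable. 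Therefore $\Delta_{\phi_o}$, being a closed subset of $B$, is compact and metrizable.

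Next I will check that $\Delta_{\phi_o} \subset [\phi_o]$. Being contained in $B$ already shows that every $\phi \in \Delta_{\phi_o}$ satisfies $\|\phi - \phi_o\|_\infty \leq D(\phi_o)$, so it only remains to note that $\phi$ is a quasimorphism. This follows because for fixed $\alpha,\beta \in \Gamma$ the bound $|\psi(\alpha\beta) - \psi(\alpha) - \psi(\beta)| \leq C$ is a closed condition in the product topology, and the orbit of $\phi_o$ satisfies such a uniform bound (one easily verifies $D(\gamma.\phi_o) \leq 2 D(\phi_o)$ from the definition of the action).

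For the main assertion, suppose for contradiction that $\mu$ is a $\Gamma$-invariant probability measure on $\Delta_{\phi_o}$. For each $\gamma' \in \Gamma$, the coordinate evaluation $e_{\gamma'}\colon \phi \mapsto \phi(\gamma')$ is continuous and bounded on $\Delta_{\phi_o}$, so I may define
\[
h(\gamma') \;:=\; \int_{\Delta_{\phi_o}} \phi(\gamma')\, d\mu(\phi).
\]
Applying $\Gamma$-invariance of $\mu$ to the function $e_{\gamma'}$ and using the explicit formula \eqref{ActionOfGamma} gives
\[
h(\gamma') \;=\; \int (\gamma.\phi)(\gamma')\, d\mu(\phi) \;=\; \int \bigl[\phi(\gamma'\gamma) - \phi(\gamma)\bigr]\, d\mu(\phi) \;=\; h(\gamma'\gamma) - h(\gamma),
\]
so $h$ is a genuine homomorphism $\Gamma \to \bR$.

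Finally, since $\Delta_{\phi_o} \subset [\phi_o]$, integrating the bound $|\phi(\gamma') - \phi_o(\gamma')| \leq D(\phi_o)$ against $\mu$ yields $|h(\gamma') - \phi_o(\gamma')| \leq D(\phi_o)$ for every $\gamma' \in \Gamma$, i.e.\ $h$ is at bounded distance from $\phi_o$. This contradicts the standing hypothesis that $\phi_o$ is not at bounded distance from any homomorphism, completing the proof. The argument is entirely elementary; the only thing to be slightly careful about is the direction of the averaging identity, but the asymmetric form of the action in \eqref{ActionOfGamma} is exactly what makes $h$ multiplicative rather than merely a cocycle.
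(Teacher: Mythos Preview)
Your proof is correct and follows essentially the same approach as the paper's Proposition~\ref{QMTrivialities}: the defect bound places the orbit inside a Tychonoff box, and averaging against a hypothetical invariant measure produces a homomorphism at bounded distance from $\phi_o$. The only cosmetic difference is that the paper deduces the quasimorphism property of limit points directly from $\|\phi-\phi_o\|_\infty\le D(\phi_o)$ (a bounded perturbation of a quasimorphism is a quasimorphism), whereas you verify it via the closedness of the defect condition together with $D(\gamma.\phi_o)\le 2D(\phi_o)$; both are fine.
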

Proposition \ref{PropIntroA} is thus an immediate consequence of the following proposition; here given a $\Gamma$-space $\Delta$, an $\bR$-space $\Omega$ and a continuous cocyle $c: \Gamma \times \Delta\to \R$ we denote by $\Delta \times_c \Omega$ the skew product $\Delta \times \Omega$ with $(\Gamma \times \bR)$-action given by
\[
(\gamma, t)(\phi, \omega) := (\gamma.\phi, \omega+t+c(\gamma, \phi)) \quad (\gamma \in \Gamma, t \in \bR, \phi \in \Delta, \omega \in \Omega).
\]
\begin{proposition}\label{PropA} If $P \subset \bR$ is a closed subset such that $\R$ acts freely on $\Omega_P$, then $\Omega_{P(\phi_o)} \cong \Delta_{\phi_o} \times_c \Omega_P$, where $c(\gamma, \phi)=\phi(\gamma)$. In particular, $\Delta_{\phi_o}$ is a continuous factor of $\Omega_{P(\phi_o)}$.
\end{proposition}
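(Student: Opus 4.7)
The plan is to construct an explicit $(\Gamma\times\bR)$-equivariant bijection
\[
\Phi : \Delta_{\phi_o}\times_c\Omega_P \longrightarrow \Omega_{P(\phi_o)}, \qquad \Phi(\phi,\omega) := \{(\gamma,t)\in\Gamma\times\bR : \phi(\gamma)+t\in\omega\},
\]
and then argue that, because the source is compact and the target is Hausdorff in the Chabauty--Fell topology, $\Phi$ is automatically a homeomorphism. The factor assertion then follows by postcomposing $\Phi^{-1}$ with the first-coordinate projection $\Delta_{\phi_o}\times_c\Omega_P\to\Delta_{\phi_o}$, which is $(\Gamma\times\bR)$-equivariant because $c(e,\phi)=\phi(e)=0$ makes the $\bR$-action on $\Delta_{\phi_o}$ trivial.

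Continuity of $\Phi$ is immediate: inside any compact window of $\Gamma\times\bR$ only finitely many $\gamma$-slices appear, and each slice $\omega-\phi(\gamma)\subset\bR$ depends continuously on $(\phi,\omega)$ in the product--Chabauty topology on the source. By construction $\Phi(\phi_o,P)=P(\phi_o)$. Equivariance is a direct computation: substituting the skew-product action $(\gamma_0,t_0).(\phi,\omega)=(\gamma_0.\phi,\omega+t_0+\phi(\gamma_0))$, the formula $(\gamma_0.\phi)(\gamma) = \phi(\gamma\gamma_0) - \phi(\gamma_0)$, and the translation action on the Chabauty space into $\Phi((\gamma_0,t_0).(\phi,\omega))$ and $(\gamma_0,t_0).\Phi(\phi,\omega)$ produces the same subset of $\Gamma\times\bR$, with the cancellations driven by the cocycle identity for $c(\gamma,\phi)=\phi(\gamma)$ together with $\phi(e)=0$.

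Injectivity is the unique place where the freeness hypothesis on $\Omega_P$ is used. Suppose $\Phi(\phi_1,\omega_1)=\Phi(\phi_2,\omega_2)$. Comparing slices over $\gamma=e$ gives $\omega_1=\omega_2=:\omega$ (using $\phi_i(e)=0$); the slice over a general $\gamma$ then reads $\omega-\phi_1(\gamma) = \omega-\phi_2(\gamma)$, so $\omega\in\Omega_P$ is fixed by the $\bR$-translation by $\phi_2(\gamma)-\phi_1(\gamma)$. Freeness of the $\bR$-action on $\Omega_P$ forces $\phi_1(\gamma)=\phi_2(\gamma)$ for every $\gamma$, hence $\phi_1=\phi_2$. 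For surjectivity onto $\Omega_{P(\phi_o)}$, I would first check that $\Delta_{\phi_o}\times\Omega_P$ coincides with the $(\Gamma\times\bR)$-orbit closure of $(\phi_o,P)$: the $\bR$-orbit closes up to $\{\phi_o\}\times\Omega_P$, and acting by $\gamma\in\Gamma$ yields $\{\gamma.\phi_o\}\times\Omega_P$ (the $\bR$-invariance of $\Omega_P$ absorbing the cocycle correction $\phi_o(\gamma)$); closing over $\gamma$ produces the full product. By continuity and equivariance, $\Phi$ maps this orbit closure onto the orbit closure of $\Phi(\phi_o,P)=P(\phi_o)$, namely $\Omega_{P(\phi_o)}$.

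The main obstacle is simply the equivariance bookkeeping: the side on which $\gamma_0$ acts in the definition of $\gamma_0.\phi$, the direction of the Chabauty translation, and the sign of the cocycle all have to be reconciled, but once a consistent choice is made the identity $\Phi\circ(\Gamma\times\bR) = (\Gamma\times\bR)\circ\Phi$ reduces to a single algebraic line. Conceptually, the map $\Phi$ just records that any Chabauty limit of translates of $P(\phi_o)$ is determined by its slice over the identity (an element $\omega\in\Omega_P$) together with the coherent family of shifts governing the remaining slices (an element $\phi\in\Delta_{\phi_o}$).
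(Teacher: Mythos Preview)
Your proposal is correct and follows essentially the same approach as the paper: the paper defines the identical map $\pi(\varphi,P)=\{(\gamma,t):\varphi(\gamma)+t\in P\}$, proves it is continuous and equivariant, uses freeness of the $\bR$-action on $\Omega_P$ to obtain injectivity via the same slice-over-$e$ argument, and concludes by compactness. The only organizational difference is that the paper first establishes the result for an abstract normalized cocycle $c:\Gamma\times\Delta\to\bR$ that separates points (your evaluation cocycle being the prototypical example) and then specializes; also, the paper spells out the continuity argument via the (CF1)/(CF2) characterization of Chabauty--Fell convergence rather than appealing to the finitely-many-slices observation, but these amount to the same thing.
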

Concerning stationary measures we have the following result; here $p$ is as in Subsection \ref{StaMeas}.
\begin{proposition}\label{PropB} Let $P \subset \bR$ be a closed subset such that the $\R$-action on $\Omega_P$ is free and uniquely ergodic with invariant measure $\theta$. Then the map
\[
\Prob_{p}(\Delta_{\phi_o}) \to \Prob_p(\Delta_{\phi_o} \times_c \Omega_P), \quad \nu \mapsto \nu \otimes \theta
\] 
is bijective. In particular, $\Prob_p(\Omega_{P(\phi_o)}) \cong \Prob_{p}(\Delta_{\phi_o})$.
\end{proposition}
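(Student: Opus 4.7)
The plan is to verify bijectivity by establishing well-definedness, injectivity, and surjectivity, with the last being the substantive step.

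Well-definedness will be a direct computation: for $\nu \in \Prob_p(\Delta_{\phi_o})$ and a bounded continuous test function $F$ on the skew product, I will compute $\int F \, d(p * (\nu \otimes \theta))$ by integrating the $t$-variable first, absorbing the shift $t + \phi(\gamma)$ via the $\R$-invariance of $\theta$, and the $\gamma$-variable second via the $p_\Gamma$-stationarity of $\nu$. Injectivity follows at once because the projection $\pi_1 \colon \Delta_{\phi_o} \times_c \Omega_P \to \Delta_{\phi_o}$ is $(\Gamma \times \R)$-equivariant with trivial $\R$-action on the base and $(\pi_1)_*(\nu \otimes \theta) = \nu$; the same projection shows that the marginal $\nu := (\pi_1)_* \mu$ of any $p$-stationary $\mu$ on the skew product is automatically $p_\Gamma$-stationary on $\Delta_{\phi_o}$.

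For surjectivity I take a $p$-stationary $\mu$ with projection $\nu = (\pi_1)_*\mu$ and aim to show $\int F \, d\mu = \int F \, d(\nu \otimes \theta)$ for products $F(\phi, \omega) = h(\phi) f(\omega)$, $h \in C(\Delta_{\phi_o})$, $f \in C(\Omega_P)$. Iterating the stationarity equation gives $\int F \, d\mu = \int T_p^n F \, d\mu$ for every $n$, where $T_p$ is the Markov operator and
\[
T_p^n F(\phi, \omega) \;=\; \bE\bigl[\,h(\Gamma_n.\phi)\, f(\omega + S_n + \phi(\Gamma_n))\,\bigr],
\]
with $(\gamma_i, t_i)_{i=1}^n \sim p^{\otimes n}$, $\Gamma_n := \gamma_n \cdots \gamma_1$ and $S_n := t_1 + \cdots + t_n$. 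Conditioning on the $\Gamma$-path, $S_n$ has distribution $q_{\gamma_1} * \cdots * q_{\gamma_n}$, where $q_\gamma$ is the absolutely continuous conditional density of $t$ given $\gamma$. Each $q_\gamma$ has strictly positive variance, so the strong law of large numbers guarantees $\sum_{i \leq n} \Var(q_{\gamma_i}) \to \infty$ almost surely. Unique ergodicity of $(\Omega_P, \R, \theta)$ then forces the inner conditional integral $\int f(\tau_s \omega')\, d(q_{\gamma_1} * \cdots * q_{\gamma_n})(s)$ to converge to $\int f \, d\theta$ uniformly in $\omega' \in \Omega_P$, along almost every trajectory $(\gamma_i)$, and absorbing the deterministic shift $\phi(\Gamma_n)$ into the base point $\omega'$ shows this convergence is uniform in $(\phi, \omega)$ as well. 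Bounded convergence removes the conditioning and yields
\[
T_p^n F(\phi, \omega) \;=\; \Bigl(\textstyle\int f \, d\theta\Bigr)\, T_{p_\Gamma}^n h(\phi) \;+\; o(1)
\]
uniformly in $(\phi, \omega)$. Integrating against $\mu$, using $p_\Gamma$-stationarity $\int T_{p_\Gamma}^n h \, d\nu = \int h \, d\nu$, and sending $n \to \infty$ gives $\int F \, d\mu = \int h \, d\nu \cdot \int f \, d\theta = \int F \, d(\nu \otimes \theta)$. Since such products separate probability measures on $\Delta_{\phi_o} \times \Omega_P$, this yields $\mu = \nu \otimes \theta$.

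The main obstacle is the quantitative form of unique ergodicity used above, namely the statement that $\int f(\tau_s\omega)\, d\eta_n(s) \to \int f\, d\theta$ uniformly in $\omega$ for any sequence of absolutely continuous $\eta_n \in \Prob(\R)$ whose densities are sufficiently diffuse (e.g., with uniformly bounded $L^\infty$-norms and variances tending to infinity). This reduces to uniform convergence of Birkhoff averages by comparing $\eta_n$ to normalized indicators of long intervals; packaging it for the conditional laws $q_{\gamma_1} * \cdots * q_{\gamma_n}$ along $p^{\otimes \infty}$-typical $\Gamma$-trajectories uses Young's inequality (to propagate $L^\infty$ bounds through convolution) together with the SLLN (to ensure variance divergence).
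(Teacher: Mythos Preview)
Your architecture (well-definedness by direct computation, injectivity via the first projection, surjectivity via Markov-operator asymptotics) is reasonable, but the surjectivity step has a genuine gap, and the paper takes a quite different and cleaner route.

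\textbf{The gap.} The criterion you invoke---densities with uniformly bounded $L^\infty$-norm and variance tending to infinity force $\int f(\tau_s\omega)\,d\eta_n(s)\to\int f\,d\theta$ uniformly in $\omega$---is false. Take $\eta_n$ with density $g_n=\tfrac12\,1_{[0,1]}+\tfrac{1}{2n}\,1_{[n,2n]}$: then $\|g_n\|_\infty=\tfrac12$ and $\Var(\eta_n)$ is of order $n^2$, yet
\[
\int f(\tau_s\omega)\,d\eta_n(s)\;\longrightarrow\;\tfrac12\int_0^1 f(\tau_s\omega)\,ds+\tfrac12\int f\,d\theta,
\]
which generically differs from $\int f\,d\theta$. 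What you actually need is that the densities of the conditional convolutions $q_{\gamma_1}*\cdots*q_{\gamma_n}$ become \emph{flat} in a quantitative sense (for instance $\|g_n\|_\infty\to0$, or $\|g_n-g_n(\cdot-h)\|_1\to0$ for each fixed $h$). Young's inequality only propagates an upper bound on $\|g_n\|_\infty$ and gives no decay; absolute continuity of the $q_\gamma$ does not by itself put their densities in $L^\infty$ or $L^2$, nor does it make their variances finite, so even the SLLN step is formally incomplete. The required equidistribution for $n$-fold convolutions can be proved, but it needs real additional input (Riemann--Lebesgue plus a Fourier argument, or a local-limit-type estimate), none of which is supplied.

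\textbf{The paper's approach.} The paper avoids the operator asymptotics entirely. Since $\{e\}\times\bR$ is central in $\Gamma\times\bR$, the measure $\delta_e\otimes p_{\bR}$ (with $p_{\bR}$ the $\bR$-marginal of $\widetilde p$) commutes with $\widetilde p$ under convolution; a short general lemma on commuting convolutions then shows that every $\widetilde p$-stationary $\mu$ is automatically $(\delta_e\otimes p_{\bR})$-stationary. Hence for each Borel $B\subset\Delta_{\phi_o}$ the fibre measure $\mu_B:=\mu(B\times\cdot)$ on $\Omega_P$ is $p_{\bR}$-stationary, and Choquet--Deny (valid because $p_{\bR}$ is absolutely continuous with support generating $\bR$) upgrades this to $\bR$-invariance. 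Unique ergodicity then forces $\mu_B=\nu(B)\,\theta$, i.e.\ $\mu=\nu\otimes\theta$ with $\nu=(\pi_1)_*\mu$. This replaces your quantitative equidistribution estimate by a soft structural step and is both shorter and more robust.
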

To establish the results from Subsection \ref{StaMeas} we thus need to analyze the $\Gamma$-action on $\Delta_{\phi_o}$. This study is also of independent interest in the theory of quasimorphisms and will occupy most of the present article.

\subsection{The left-harmonic core of a quasimorphism}
Let $\Gamma$ be a countable group and let $\phi_o: \Gamma \to \bR$ be a quasimorphism with $\phi_o(e) = 0$. We fix a symmetric probability measure $p$ on $\Gamma$ whose support generates $\Gamma$ and such that $\phi_o$ is $p$-integrable. We recall that $\mathcal L_p(\phi_o) \subset [\phi_o]$ denote the subspace of left-$p$-harmonic quasimorphisms; note that this space is invariant under $\Gamma$.
\begin{definition} The \emph{left-$p$-harmonic core} of $\phi_o$ is the subset
\[
\cC_p(\phi_o) := \bigcap_{\phi \in \cL_p(\phi_o)} \Delta_{\phi_o} \subset \cL_p(\phi_o) \subset [\phi_o].
\]
\end{definition}
The following theorem implies that $\cC_p(\phi_o)$ is non-empty; it is the main technical result of this article and implies Theorem \ref{ThmB} and Theorem \ref{ThmC}.
\begin{theorem}[Stationary measures and left-harmonic core]\label{ThmX} If the quasimorphism $\phi_o: \Gamma \to \bR$ is not at bounded distance from a homomorphism, then the following hold:
\begin{enumerate}[(i)]
\item Every $p$-stationary measure on $\cL_p(\phi_o)$ is supported on $\cC_p(\phi_o)$.
\vspace{0.1cm}
\item $\cC_p(\phi_o)$ admits a unique $p$-stationary probability measure $\nu_{\phi_o}$. \vspace{0.1cm}
\item $(\cC_p(\phi_o), \nu_{\phi_o})$ is a non-trivial $(\Gamma, p)$-boundary. \vspace{0.1cm}
\item If $\nu$ is any $p$-stationary measure on $\Delta_{\phi_o}$, then there exists a measure-preserving $\nu$-measurable map $(\Delta_{\phi_o}, \nu) \to (\cC_p(\phi_o), \nu_{\phi_o})$.
\end{enumerate}
\end{theorem}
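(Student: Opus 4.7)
I would realize $\cC_p(\phi_o)$ as a $\Gamma$-equivariant measurable image of the Poisson boundary $(B,\mu)$ of $(\Gamma,p)$, and then deduce all four parts from the universal properties of the boundary. The key construction is a martingale one: fix any $\psi \in \cL_p(\phi_o)$, and for each $\gamma \in \Gamma$ consider
\[
M_n^{\psi,\gamma} \;:=\; (X_n.\psi)(\gamma) \;=\; \psi(\gamma X_n) - \psi(X_n)
\]
along the random walk $(X_n)$ on $\Gamma$ with step distribution $p$. The quasimorphism defect gives $|M_n^{\psi,\gamma}| \le |\psi(\gamma)| + D(\psi)$ uniformly in $n$, while left-$p$-harmonicity of $\psi$ together with the $\Gamma$-invariance of $\cL_p(\phi_o)$ makes $M_n^{\psi,\gamma}$ a bounded martingale. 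Doob's convergence theorem then produces a $\Gamma$-equivariant measurable map
\[
\pi_\psi\colon (B,\mu)\to \cL_p(\phi_o), \qquad \pi_\psi(\xi)(\gamma) := \lim_{n\to\infty} M_n^{\psi,\gamma},
\]
satisfying $\pi_\psi(\xi) \in \Delta_\psi$ and $\|\pi_\psi(\xi) - \psi\|_\infty \le D(\psi)$ for $\mu$-a.e.\ $\xi$.

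Next I would show that the essential image of $\pi_\psi$ does not depend on $\psi$ and equals $\cC_p(\phi_o)$. Two elements $\psi,\psi' \in \cL_p(\phi_o)$ differ by a bounded left-$p$-harmonic function $b=\psi'-\psi$ whose Poisson boundary values $\hat b \in L^\infty(B,\mu)$ satisfy $b(X_n) \to \hat b(\xi)$ a.s., giving
\[
\pi_{\psi'}(\xi)(\gamma) = \pi_\psi(\xi)(\gamma) + \hat b(\gamma.\xi) - \hat b(\xi).
\]
An equivariance-plus-compactness argument (combining ergodicity of $\Gamma \curvearrowright (B,\mu)$ with the compactness of $\Delta_\psi$ in $\mathscr F_o(\Gamma)$) should identify the cocycle term with an element of $\Delta_{\pi_\psi(\xi)} - \pi_\psi(\xi)$, forcing $\Delta_{\pi_{\psi'}(\xi)} = \Delta_{\pi_\psi(\xi)}$ for $\mu$-a.e.\ $\xi$; this common orbit closure will be identified with $\cC_p(\phi_o)$. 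The pushforward $\nu_{\phi_o} := (\pi_\psi)_* \mu$ is then a $p$-stationary probability measure on $\cC_p(\phi_o)$, and $(\cC_p(\phi_o), \nu_{\phi_o})$ is a $(\Gamma,p)$-boundary by virtue of being a $\Gamma$-factor of the Poisson boundary. Uniqueness of stationary measures on boundaries gives (ii), and non-triviality of $\nu_{\phi_o}$ in (iii) follows because triviality would force $\pi_\psi$ to be $\mu$-a.e.\ constant, in turn forcing $\phi_o$ to be at bounded distance from a homomorphism, contradicting the hypothesis.

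For (iv) I would invoke the Furstenberg--Kaimanovich universal property: any $p$-stationary $(\Delta_{\phi_o},\nu)$ admits a measure-preserving $\Gamma$-equivariant map to $(B,\mu)$, which composed with $\pi_\psi$ yields the required factor map $(\Delta_{\phi_o},\nu) \to (\cC_p(\phi_o), \nu_{\phi_o})$. Part (i) will then follow from the martingale step together with stationarity: given $\nu$ $p$-stationary on $\cL_p(\phi_o)$, the Markov chain $(X_n.\psi)$ with $\psi \sim \nu$ has stationary law $\nu$; its almost-sure martingale limit $\pi_\psi(\xi)$ lies in $\cC_p(\phi_o)$ by the previous paragraph and has law $\nu$ by bounded convergence, so $\nu$ is supported on $\cC_p(\phi_o)$.

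The principal technical obstacle is the identification in the second paragraph: verifying that the $\xi$-dependent bounded cocycle $\gamma \mapsto \hat b(\gamma.\xi) - \hat b(\xi)$ genuinely lies in the orbit-closure translate $\Delta_{\pi_\psi(\xi)} - \pi_\psi(\xi)$ rather than escaping $\Delta_\psi$ altogether. This requires a delicate interplay between ergodicity of $\Gamma \curvearrowright (B,\mu)$, compactness of $\Delta_\psi$ in the product topology on $\mathscr F_o(\Gamma)$, and the role of bounded left-$p$-harmonic functions as an ``affine tangent space'' to $\cL_p(\phi_o)$; the hypothesis that $\phi_o$ is not at bounded distance from a homomorphism will again be used to rule out degenerations. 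Once this identification is in place, the remaining assertions reduce to standard Poisson-boundary manoeuvres.
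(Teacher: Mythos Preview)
Your approach to part (iii) is close in spirit to the paper's: both build a $\Gamma$-equivariant map from the Poisson boundary into the hull of a suitable representative. But there is a direction error in the martingale step. With the action $(\gamma.\psi)(\gamma')=\psi(\gamma'\gamma)-\psi(\gamma)$ and the right random walk $X_{n+1}=X_n\omega_{n+1}$, one computes
\[
\bE\big[(X_{n+1}.\psi)(\gamma)\,\big|\,\mathcal F_n\big]=\sum_s p(s)\big(\psi(\gamma X_n s)-\psi(X_n s)\big),
\]
which equals $(X_n.\psi)(\gamma)$ precisely when $\psi*p=\psi$, i.e.\ when $\psi$ is \emph{right}-$p$-harmonic, not left. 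The paper handles this by passing to a bi-harmonic representative (Remark~\ref{Rmk.BiHarm}) and then running the boundary construction for right-harmonic $\psi_o$ (Lemma~\ref{RightHarmonic}, Lemma~\ref{CocycleAlphaRight}). Since your entire chain of arguments for (i), (ii), (iv) feeds off the map $\pi_\psi$ built from this martingale, the error propagates.

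For (i) and (ii), even after fixing the harmonicity side, your ``principal technical obstacle'' is a real obstacle and your sketch does not overcome it. From $\pi_{\psi'}(\xi)=\pi_\psi(\xi)+\hat b(\cdot\,\xi)-\hat b(\xi)$ you only get that $\pi_\psi(\xi)$ and $\pi_{\psi'}(\xi)$ are at bounded distance; this does \emph{not} force $\Delta_{\pi_\psi(\xi)}=\Delta_{\pi_{\psi'}(\xi)}$, let alone that the common image sits in $\bigcap_{\psi}\Delta_\psi$. The paper avoids this entirely by a joinings argument (Lemma~\ref{EtaJoining}): for any two hulls $\Delta_1,\Delta_2\subset\cL_p(\phi_o)$ and any ergodic $p$-stationary joining $\eta$, the bounded cocycle $\beta(\gamma,(\psi_1,\psi_2))=\psi_1(\gamma)-\psi_2(\gamma)$ is a coboundary by Zimmer's theorem, its transfer function is shown to be bounded and $p$-harmonic (using left-harmonicity of $\psi_1-\psi_2$), hence constant by ergodicity, forcing $\psi_1=\psi_2$ $\eta$-a.e. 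Unique stationarity on each hull and the support statement (i) follow immediately. Your proposal neither invokes Zimmer nor supplies a substitute for this rigidity step.

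For (iv), the claimed ``Furstenberg--Kaimanovich universal property'' is false: an arbitrary $p$-stationary system $(\Delta_{\phi_o},\nu)$ need not admit a $\Gamma$-factor map onto the Poisson boundary $(B,\mu)$ (the Poisson boundary is maximal among boundaries, so factor maps go \emph{from} $B$, not \emph{to} it). Moreover $\phi_o$ is not assumed left-harmonic, so elements of $\Delta_{\phi_o}$ need not be harmonic and your martingale does not even start. The paper instead constructs an explicit $\Gamma$-equivariant universally measurable map $\pi:\Delta_{\phi_o}\to\cL_p(\phi_o)$ by averaging the iterates $\sum_{\gamma'}p^{*n}(\gamma')(\varphi(\gamma'\gamma)-\varphi(\gamma'))$ against a Mokobodski mean on $\bN$; left-harmonicity of $\pi(\varphi)$ comes from shift-invariance of the mean, and then parts (i)--(ii) force $\pi_*\nu=\nu_{\phi_o}$.
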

We complement this by the following result, which implies both Theorem \ref{ThmD} and Theorem \ref{ThmE}.
\begin{theorem}\label{ThmF} Every quasimorphism $\phi_o: \Gamma \to \R$ is equivalent to a quasimorphism $\phi_1: \Gamma \to \bZ$ such that $\Delta_{\phi_1}$ admits uncountably many $p$-stationary and $\Gamma$-ergodic probability measures. If $\Gamma$ surject onto $\Z$, then $\phi_1$ can be chosen to be symmetric.
\end{theorem}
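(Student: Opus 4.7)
The plan is to construct $\phi_1$ as a sum $\phi_1=\psi+c\,\eta$, where $\psi:\Gamma\to\bZ$ is an integer approximation of $\phi_o$, $\eta:\Gamma\to\{0,1\}$ is a generic bounded perturbation, and $c$ is a separation constant chosen so that $\psi$ and $\eta$ can be independently recovered from $\phi_1$ inside the orbit closure. Concretely, set $\psi(\gamma):=\lfloor\phi_o(\gamma)+\tfrac12\rfloor$ (so automatically $\psi(e)=0$); then $|\psi-\phi_o|_\infty\leq 1$, $\psi$ is integer-valued and equivalent to $\phi_o$, and in particular not at bounded distance from a homomorphism. Let $D:=D(\psi)$ be its defect, and recall that the inequality $|\gamma.\phi-\phi|_\infty\leq D(\phi)$ forces $\Delta_\psi$ into the sup-ball of radius $D$ around $\psi$. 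Choose $\eta\in\{0,1\}^\Gamma$ with $\eta(e)=0$ whose right-translation orbit $\{R_\gamma\eta:\gamma\in\Gamma\}$ is dense in $\{0,1\}^\Gamma$; such $\eta$ exist by Baire category (equivalently, are $\beta_{1/2}$-almost sure). Finally set $c:=2D+3$ and $\phi_1:=\psi+c\eta:\Gamma\to\bZ$; then $|\phi_1-\phi_o|_\infty\leq c+1$, so $\phi_1\in[\phi_o]$.

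The key step is to produce a $\Gamma$-equivariant continuous surjection $F:\Delta_{\phi_1}\to\Delta_\eta$. Consider the joint orbit closure $J:=\overline{\{(\gamma.\psi,\gamma.\eta):\gamma\in\Gamma\}}\subset\Delta_\psi\times\Delta_\eta$ and the continuous $\Gamma$-equivariant summation map $\sigma:J\to\Delta_{\phi_1}$, $(A,B)\mapsto A+cB$, which is surjective by construction. I claim $\sigma$ is also injective: if $(A,B),(A',B')\in J$ satisfy $A+cB=A'+cB'$, then $|A-A'|_\infty\leq 2D$, while $B,B'\in\{-1,0,1\}^\Gamma$ (limits of the cocycle orbit of the $\{0,1\}$-valued $\eta$), so whenever $B(\gamma')\neq B'(\gamma')$ we have $|c(B-B')(\gamma')|\geq c=2D+3>2D$, a contradiction; hence $B=B'$ and $A=A'$. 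Thus $\sigma$ is a homeomorphism and $F$ is its composition with the projection to $\Delta_\eta$. In parallel, the map $q:\{0,1\}^\Gamma\to\Delta_\eta$, $\omega\mapsto\omega-\omega(e)\mathbf{1}$, is a continuous $\Gamma$-equivariant surjection (shift on the source, cocycle on the target), injective away from the two constant sequences, with surjectivity following from the density of the shift orbit of $\eta$.

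Pushing forward Bernoulli measures $\beta_s$, $s\in(0,1)$, through $q$ now yields uncountably many mutually singular $\Gamma$-invariant, $\Gamma$-ergodic, $p$-stationary probability measures $\mu_s$ on $\Delta_\eta$. The pushforward $F_*:\Prob_p(\Delta_{\phi_1})\to\Prob_p(\Delta_\eta)$ is a continuous affine surjection---any Borel lift of $\mu_s$ averaged against $p$ by Markov--Kakutani remains $p$-stationary with the same projection---so I can take an extreme point $\nu_s$ of the non-empty convex compact fibre $F_*^{-1}(\mu_s)$. A brief extremality check (a decomposition $\nu_s=\tfrac12(\nu_1+\nu_2)$ with $\nu_i$ $p$-stationary would give $\mu_s=\tfrac12(F_*\nu_1+F_*\nu_2)$, whence ergodicity of $\mu_s$ forces $F_*\nu_i=\mu_s$ and extremality in the fibre forces $\nu_i=\nu_s$) shows $\nu_s$ is $\Gamma$-ergodic; different $s$ produce different $\nu_s$, which completes the first claim. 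For the antisymmetric variant when $\pi:\Gamma\twoheadrightarrow\bZ$ is a surjection, I take $\psi$ to be an integer-valued antisymmetric quasimorphism equivalent to $\phi_o$ (obtained by rounding the antisymmetrisation $\tfrac12(\phi_o(\gamma)-\phi_o(\gamma^{-1}))$ on a chosen positive cone and extending antisymmetrically) and $\eta:=\eta_\bZ\circ\pi$ with $\eta_\bZ:\bZ\to\{-1,0,1\}$ antisymmetric and generic so that its shift orbit is dense in $\{-1,0,1\}^\bZ$; the resulting $\phi_1$ is antisymmetric and integer-valued, and Bernoulli measures on $\{-1,0,1\}^\bZ$ drive the same machinery. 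The hardest step is the injectivity of $\sigma$, which is precisely what forces the choice $c>2D$ and decouples the ``quasimorphic'' and ``random'' contributions; once it is established, the rest is a routine Krein--Milman/Markov--Kakutani lifting argument.
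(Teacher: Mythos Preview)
Your proof is correct and follows essentially the same strategy as the paper: decouple $\phi_1$ into a quasimorphism part and a bounded ``generic'' perturbation, exhibit a continuous $\Gamma$-factor map from $\Delta_{\phi_1}$ onto the hull of the bounded part, and then feed Bernoulli measures through that factor. The separation mechanism differs cosmetically---you multiply the perturbation by $c=2D+3$, while the paper rescales $\phi_o$ into $3\bZ$ and adds a $\{-1,0,1\}$-valued $s_o$---but the injectivity argument is the same arithmetic.

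Two points are worth noting. First, you make explicit the lifting step $|\Prob_p^{\mathrm{erg}}(\Delta_{\phi_1})|\ge|\Prob_p^{\mathrm{erg}}(\Delta_\eta)|$ via Markov--Kakutani on the fibre plus a Krein--Milman extremality check; the paper simply asserts this inequality, so your version is more self-contained. Second, your antisymmetric construction (a generic antisymmetric $\eta_\bZ$ with dense shift orbit in $\{-1,0,1\}^\bZ$, pulled back along $\pi$) is different from and arguably simpler than the paper's, which builds $s_o=\chi_A-\chi_{-A}$ for a carefully chosen left-generic, right-vanishing $A\subset\bZ$ and computes $\Delta_{s_o}$ explicitly. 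Your claim that such an antisymmetric $\eta_\bZ$ exists is correct (put any universal sequence on the positive integers and extend antisymmetrically), but it deserves one sentence of justification, and you should note that in this case $\eta$ takes values in $\{-1,0,1\}$ rather than $\{0,1\}$, so elements of $\Delta_\eta$ lie in $\{-2,\dots,2\}^\Gamma$; the separation inequality $c>2D$ still suffices.
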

\subsection{Organization of the article}
This article is organized as follows: In Section \ref{Boundary} we recall some background on random walks on countable groups and their Poisson boundaries. Section \ref{SecQuasi} discusses hulls of quasimorphisms and establishes Proposition \ref{PropTrivial}. 

Our results concerning stationary measures on spaces of quasimorphisms are achieved in Sections \ref{SecCore} -- \ref{SecMany}. More precisely,  Section \ref{SecCore} establishes the first three parts of Theorem \ref{ThmX}, whereas the final part is established in Sections \ref{SecCore2}. Section \ref{SecMany} establishes Theorem \ref{ThmF}.

Section \ref{SecTwisted} discusses twisted sets and their hulls. We first establish Proposition \ref{PropA} and Proposition \ref{PropB}. In view of these propositions, Proposition \ref{PropIntroA} then follows from Proposition \ref{PropTrivial}, Theorem \ref{ThmB} and Theorem \ref{ThmC} follow from Theorem \ref{ThmX}, and Theorem \ref{ThmD} and Theorem \ref{ThmE} follow from Theorem \ref{ThmF}.\\

\textbf{Acknowledgment.} This article arose from discussions during the Workshop on ``Aperiodic order and approximate groups''  as KIT during August 2023. We thank KIT for the funding of the workshop and are grateful to the participants for their input. M.B.\ was supported by Grant 11253320 from the Swedish Research Council (VR)

\section{Preliminaries on boundary theory}\label{Boundary}
Throughout this section let let $\Gamma$ be a countable group and let $p \in \mathrm{Prob}$ be a symmetric probability measure on $\Gamma$ whose support generates $\Gamma$; we then refer to $(\Gamma, p)$ as a \emph{symmetric measured group}. We denote by $\bP_p = p^{\otimes \bN}$ the induced measure on $\Gamma^{\bN}$; the
sequence $(Z_n)$ of $\Gamma$-valued random variables given by
\[
Z_n: (\Gamma^{\bN}, \bP_p) \to \Gamma, \quad (\omega_n)_{n \in \bN} \mapsto \omega_1 \cdots \omega_n.
\]
is called the associated \emph{random walk}. 

A function $f: \Gamma \to \bR$ such that $\gamma \mapsto f(\gamma \gamma')$ (respectively $\gamma \mapsto f(\gamma' \gamma)$ is $p$-integrable for every $\gamma' \in \Gamma$ is called \emph{left-$p$-harmonic} (respectively \emph{right-$p$-harmonic}) if $p \ast f = f$ (respectively $f \ast p = f$). We denote by $\mathscr{H}_l^\infty(\Gamma, p)$ (respectively $\mathscr{H}_r^\infty(\Gamma, p)$ the spaces of all bounded left-$p$-harmonic (respectively bounded right-$p$-harmonic) functions. If $B$ is a standard Borel space with an action of $\Gamma$ by Borel automorphisms, then a Borel probability measure $m$ on $B$ is called \emph{$p$-stationary} if $p \ast m = m$; in this case we call $(B,m)$ a \emph{$(\Gamma, p)$-space}. We say that a $(\Gamma, p)$-space $(B,m)$ is \emph{ergodic} if $m$ is $\Gamma$-ergodic.

Given a standard Borel space $B$ with an action of $\Gamma$ by Borel automorphisms, we denote by $\mathrm{Prob}_p(B)$ the space of $p$-stationary Borel probability measures on $B$. We write $\Prob_p^{\textrm{erg}}(B)$ and $\mathrm{Prob}_\Gamma(B)$ for the subspaces of all $\Gamma$-ergodic, respectively $\Gamma$-invariant measures in  $\mathrm{Prob}_p(B)$. For the following classical statement see e.g.\ \cite[Theorem 2.13]{Furman}:
\begin{lemma}
\label{Lemma_Poisson}
For every symmetric measured group $(\Gamma,p)$ there exists an ergodic $(\Gamma, p)$-space $(B,m)$ such that the following hold:
\begin{itemize}
\item[(i)] The Poisson transform $\mathscr{P}_m : L^\infty(B,m) \ra \mathscr{H}^\infty_r(\Gamma,p)$ defined by
\[
\mathscr{P}_m f(\gamma) = \int_B f(\gamma.b) \, dm(b) \quad (\gamma \in \Gamma)
\]
is a surjective isometry. \vspace{0.1cm}
\item[(ii)] For every countable subset $\cA \subset L^\infty(B,m)$, there exist a $\bP_p$-conull Borel set $\Omega \subset \Gamma^{\bN}$ and a Borel map $Z_\infty : \Omega \ra B$ such that
\[
(Z_\infty)_*\bP_p|_\Omega = m \qand \lim_{n \ra \infty} \mathscr{P}_m f(Z_n(\omega)) = f(Z_\infty(\omega)),
\]
for all $\omega \in \Omega$ and $f \in \cA$.
\end{itemize}
Furthermore, $(B,m)$ is unique up to measure-preserving $\Gamma$-isomorphisms.
\end{lemma}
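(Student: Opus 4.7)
The plan is to realise the desired space $(B,m)$ as the Mackey point realisation of the trajectory space $(\Gamma^{\bN}, \bP_p)$ by its tail $\sigma$-algebra, verify (i) and (ii) via bounded martingale convergence, and deduce uniqueness from the fact that $\mathscr{P}_m$ is a bijective isometry. Concretely, I would equip $(\Gamma^{\bN}, \bP_p)$ with the $\Gamma$-action induced by shifting the starting point of the walk $(Z_n)$, define $\cT := \bigcap_{n\ge 1} \sigma(Z_n, Z_{n+1}, \ldots)$, and let $(B,m)$ be the associated standard Borel quotient with canonical projection $Z_\infty: \Gamma^{\bN} \to B$ and $m := (Z_\infty)_* \bP_p$. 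The shift action descends to Borel automorphisms of $B$, $p$-stationarity of $m$ holds because averaging the one-step shift against $p$ reproduces the trajectory law, and ergodicity of $(B,m)$ follows from the Hewitt--Savage zero--one law applied to the increments, using that $\supp(p)$ generates $\Gamma$.

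For part (i), a direct computation shows that $\mathscr{P}_m$ maps $L^\infty(B,m)$ contractively into $\mathscr{H}_r^\infty(\Gamma,p)$. For surjectivity, given $h \in \mathscr{H}_r^\infty(\Gamma,p)$, the process $M_n := h \circ Z_n$ is a bounded martingale on $(\Gamma^{\bN}, \bP_p)$ with respect to the filtration generated by the $Z_n$, so by Doob's theorem it converges $\bP_p$-almost surely to a bounded $\cT$-measurable $M_\infty$, which descends to some $\tilde h \in L^\infty(B,m)$ with $\|\tilde h\|_\infty \le \|h\|_\infty$. Applying the same martingale argument to the shifted walk $\gamma Z_n$ and exploiting right-$p$-harmonicity of $h$ to conclude $\bE_{\bP_p}[h(\gamma Z_n)] = h(\gamma)$ for every $n$, dominated convergence yields $\mathscr{P}_m \tilde h(\gamma) = h(\gamma)$; isometry then follows from the reverse norm inequality. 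Part (ii) is the identification $\mathscr{P}_m f \circ Z_n = \bE_{\bP_p}[f \circ Z_\infty \mid \sigma(Z_1, \ldots, Z_n)]$ (coming from the Markov property together with the inversion in (i)), which by L\'evy's upward theorem converges $\bP_p$-almost surely to $f \circ Z_\infty$; intersecting the conull sets over $f \in \cA$ produces the common conull $\Omega$.

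For uniqueness, if $(B',m')$ is another ergodic $(\Gamma,p)$-space satisfying (i), then $\mathscr{P}_{m'} \circ \mathscr{P}_m^{-1}: L^\infty(B,m) \to L^\infty(B',m')$ is a $\Gamma$-equivariant isometric $*$-isomorphism of commutative von Neumann algebras intertwining the vector states determined by $m$ and $m'$; by the Mackey point-realisation theorem this lifts to a measure-preserving $\Gamma$-isomorphism $(B,m) \cong (B',m')$.

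The step I expect to be the main obstacle is the surjectivity in (i): one must verify not only that the martingale limit $M_\infty$ is genuinely tail-measurable and hence descends to $B$, but also that the Poisson formula $\mathscr{P}_m \tilde h(\gamma) = h(\gamma)$ holds on the nose for every $\gamma \in \Gamma$ rather than merely almost everywhere. The delicate point is that from the perspective of a walk started at $\gamma$ the boundary distribution is $\gamma_*m$ rather than $m$, so recovering the pointwise identity requires threading $p$-stationarity of $m$ and right-harmonicity of $h$ carefully through the martingale inversion.
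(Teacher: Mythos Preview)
The paper does not give a proof of this lemma: it is quoted as a classical fact with a reference to \cite[Theorem 2.13]{Furman}. Your sketch is precisely the standard construction found there (and going back to Furstenberg): realise $B$ as the Mackey quotient of the trajectory tail $\sigma$-algebra, obtain (i) from bounded martingale convergence, obtain (ii) from L\'evy's upward theorem, and deduce uniqueness from the fact that (i) pins down $L^\infty(B,m)$ as a $\Gamma$-von Neumann algebra.

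One imprecision is worth flagging. The phrase ``equip $(\Gamma^{\bN}, \bP_p)$ with the $\Gamma$-action induced by shifting the starting point'' is not literally correct: the map $(\omega_1,\omega_2,\dots)\mapsto(\gamma\omega_1,\omega_2,\dots)$ is in general \emph{not} $\bP_p$-quasi-invariant on the full increment space, since $\gamma_*p$ need not be absolutely continuous with respect to $p$. What is true---and what you actually need---is that this action is well defined and non-singular on the tail quotient $B$; this is where the hypothesis that $\supp(p)$ generates $\Gamma$ enters, via the Markov property and the fact that the tail ignores finitely many increments. Your Hewitt--Savage argument for ergodicity is then correct once you observe that a $\Gamma$-invariant tail event is exchangeable in the increments (permuting $\omega_1,\dots,\omega_k$ multiplies $(Z_n)_{n\ge k}$ on the left by an element of $\Gamma$). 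The obstacle you isolate in the surjectivity step---passing from the almost-sure martingale limit to the pointwise identity $\mathscr{P}_m\tilde h(\gamma)=h(\gamma)$ for every $\gamma$---is indeed the crux, and is handled exactly as you indicate, by combining $p$-stationarity of $m$ with the identification of $\gamma_*m$ as the hitting distribution from $\gamma$.
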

The space $(B,m)$ is called the \emph{Poisson boundary} of the measured group $(\Gamma, p)$. If $(B,m)$ is the Poisson boundary of $(\Gamma, p)$ and $\pi: B \to B'$ is a $\Gamma$-equivariant Borel map, then $(B', \pi_*m)$ is called a $(\Gamma, p)$-boundary.
Note that every $(\Gamma,p)$-boundary is $\Gamma$-ergodic. Finally, a $(\Gamma, p)$-boundary $(B', m')$ is called \emph{trivial} if $m' = \delta_{b'}$ for some $\Gamma$-fixed point $b' \in B'$, and \emph{non-trivial} otherwise.

\section{Hulls of quasimorphisms}\label{SecQuasi}
Throughout this section, let $\Gamma$ be a countable group.
\subsection{Quasimorphisms}
\begin{definition}
The \emph{defect set} of a function $\varphi: \Gamma \to \bR$ is defined as
\[
 \mathscr{D}(\varphi) := \{\phi(gh)-\phi(g)-\phi(h) \mid g,h \in \Gamma\}.
\]
A function $\varphi$ is called a \emph{quasimorphism} if $\mathscr{D}(\varphi)$ is bounded; in this case the number
\[
D(\varphi) := \sup_{g,h \in \Gamma} |\phi(gh)-\phi(g)-\phi(h)|
\]
is called the \emph{defect} of $\varphi$.
\end{definition}
In this article we will only consider quasimorphisms which are \emph{normalized} in the sense that $\varphi(e) = 0$. Two quasimorphisms $\varphi_1, \varphi_2$  with $\|\varphi_1-\varphi_2\|_\infty < \infty$ are said to be \emph{equivalent}. Every quasimorphism $\varphi$ is equivalent to a $\bZ$-valued quasimorphism, e.g.\ $\lfloor \varphi \rfloor$, hence to a quasimorphism with a \emph{finite} defect set. A quasimorphism is called \emph{cohomologically trivial} if it is equivalent to a homomorphism. 

\begin{remark}[Antisymmetric quasimorphism]\label{QMBasics} 
We say that a quasimorphism is \emph{antisymmetric} if $\phi(\gamma^{-1}) = -\phi(\gamma)$. If $\phi_o: \Gamma \to \bR$ is a quasimorphism and $A \subset \bR$ is any relatively dense subset, then we can always find an antisymmetric quasimorphism $\phi: \Gamma \to \bR$ which takes values in $A-A$ and is equivalent to $\phi_o$. For example, if $A = \bZ$ we may take
\[
\phi: \Gamma \to \bZ, \quad \phi(\gamma) :=  \lfloor \phi_o(\gamma)/2 \rfloor - \lfloor \phi_o(\gamma^{-1})/2 \rfloor.
\]
\end{remark}
\subsection{Hulls and invariant measures}
We equip $\bR^\Gamma$ with the product topology and consider the continuous action of $\Gamma$  on
\[
\mathscr{F}_o(\Gamma) := \{\varphi \in \bR^\Gamma \mid \varphi(e) = 0\}.
\]
given by \eqref{ActionOfGamma}. The orbit closure of $\varphi_o \in \mathscr{F}_o(\Gamma)$ will be denoted by $\Delta_{\varphi_o}$ and referred to as the \emph{hull} of $\varphi_o$. By definition of the product topology, point evaluation defines a continuous map $c: \Gamma \times \mathscr{F}_o(\Gamma) \to \bR$, $(h, \varphi) \mapsto \varphi(h)$. This map is a 
\emph{normalized $\Gamma$-cocycle} on $\mathscr{F}_o(\Gamma)$, i.e.\ for all $\varphi \in \mathscr{F}_o(\Gamma)$ and $\gamma_1, \gamma_2 \in \Gamma$ we have
\begin{equation}\label{CanonicalCocycle}
c(\gamma_1\gamma_2, \varphi)  = c(\gamma_1,\gamma_2.\varphi) + c(\gamma_2,\varphi) \text{ and } c(e, \varphi) = 0.
\end{equation}
\begin{proposition}\label{QMTrivialities} Let $\varphi_o: \Gamma \to \bR$ be a normalized quasimorphism.
\begin{enumerate}[(i)]
\item The hull $\Delta_{\varphi_o}$ is a compact metrizable space. \vspace{0.1cm}
\item For all $\varphi \in \Delta_{\varphi_o}$ we have $\|\varphi-\varphi_o\|_\infty \leq D(\varphi_o)$. In particular, the hull $\Delta_{\varphi_o}$ consists of quasimorphisms which are equivalent to $\varphi_o$. \vspace{0.1cm}
\item If $\varphi_o$ is cohomologically non-trivial, then $\mathrm{Prob}_\Gamma(\Delta_{\varphi_o}) = \emptyset$.
\end{enumerate}
\end{proposition}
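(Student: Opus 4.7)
The plan is to prove the three parts in order, with the substantive content being a standard cocycle-averaging argument for (iii). Throughout, let $D := D(\varphi_o)$.

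For part (i), I would start with the observation that $\mathscr F_o(\Gamma)$, being a closed subspace of $\bR^\Gamma$ for countable $\Gamma$, is metrizable. The defect bound gives $|\varphi_o(\gamma'\gamma) - \varphi_o(\gamma') - \varphi_o(\gamma)| \leq D$ for all $\gamma, \gamma' \in \Gamma$, so
\[
|(\gamma.\varphi_o)(\gamma')| \;=\; |\varphi_o(\gamma'\gamma) - \varphi_o(\gamma)| \;\leq\; |\varphi_o(\gamma')| + D.
\]
Hence the $\Gamma$-orbit of $\varphi_o$ sits inside the product $K := \prod_{\gamma' \in \Gamma} [-(|\varphi_o(\gamma')|+D),\, |\varphi_o(\gamma')|+D]$, which is compact by Tychonoff. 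Since $K \cap \mathscr F_o(\Gamma)$ is closed, its closure $\Delta_{\varphi_o}$ is compact and metrizable.

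For part (ii), the same computation rewrites as $\|\gamma.\varphi_o - \varphi_o\|_\infty \leq D$. Since the topology is pointwise convergence, this pointwise inequality passes to limits of orbit points, yielding $\|\varphi - \varphi_o\|_\infty \leq D$ for every $\varphi \in \Delta_{\varphi_o}$. Any function at bounded distance from a quasimorphism is again a quasimorphism (the defect changes by at most $3\|\varphi - \varphi_o\|_\infty$), and is by construction equivalent to $\varphi_o$.

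For part (iii), I would argue by contradiction: suppose $\mu \in \mathrm{Prob}_\Gamma(\Delta_{\varphi_o})$ exists. Define
\[
h: \Gamma \to \bR, \qquad h(\gamma) \;:=\; \int_{\Delta_{\varphi_o}} c(\gamma, \varphi)\, d\mu(\varphi) \;=\; \int_{\Delta_{\varphi_o}} \varphi(\gamma)\, d\mu(\varphi),
\]
which is well-defined and finite by part (ii). Using the cocycle identity \eqref{CanonicalCocycle} together with $\Gamma$-invariance of $\mu$,
\[
h(\gamma_1\gamma_2) \;=\; \int c(\gamma_1, \gamma_2.\varphi)\, d\mu(\varphi) + \int c(\gamma_2, \varphi)\, d\mu(\varphi) \;=\; h(\gamma_1) + h(\gamma_2),
\]
so $h$ is a homomorphism. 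On the other hand, by part (ii),
\[
|h(\gamma) - \varphi_o(\gamma)| \;\leq\; \int |\varphi(\gamma) - \varphi_o(\gamma)|\, d\mu(\varphi) \;\leq\; D,
\]
so $\varphi_o$ is at bounded distance from $h$, contradicting cohomological non-triviality.

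None of the steps looks like a real obstacle: (i) and (ii) are pointwise estimates from the defect bound, and (iii) is the standard "average a cocycle against an invariant measure to extract a homomorphism" trick. The only point requiring slight care is that the $\Gamma$-action on $\mathscr F_o(\Gamma)$ is by the twisted formula \eqref{ActionOfGamma}, so one must verify that evaluation $\varphi \mapsto \varphi(\gamma)$ really is a cocycle in the sense of \eqref{CanonicalCocycle}, which follows directly from $(\gamma_2.\varphi)(\gamma_1) = \varphi(\gamma_1\gamma_2) - \varphi(\gamma_2)$.
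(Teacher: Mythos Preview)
Your proposal is correct and essentially identical to the paper's own proof: both use the defect bound plus Tychonoff for (i) and (ii), and for (iii) both average the evaluation cocycle against the putative invariant measure to produce a homomorphism at bounded distance from $\varphi_o$. The only cosmetic difference is that you phrase (iii) via the cocycle identity \eqref{CanonicalCocycle} while the paper writes out the corresponding computation with $\varphi(\gamma_1\gamma_2) = (\gamma_2.\varphi)(\gamma_1) + \varphi(\gamma_2)$ directly.
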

\begin{proof} Pick $\varphi \in \Delta_{\varphi_o}$ and choose a sequence $(\gamma_n)$ in $\Gamma$
such that $\gamma_n.\varphi_o \ra \varphi$ in $\mathscr{F}_o(\Gamma)$. For all $n \in \bN$ and $\gamma \in \Gamma$ we obtain
\[
\|(\gamma_n.\varphi_o)(\gamma) - \varphi_o(\gamma)\| = \|\varphi_o(\gamma\gamma_n) - \varphi_o(\gamma_n) - \varphi_o(\gamma)\| \leq D(\varphi_o),
\]
which implies (ii) and shows that 
\begin{equation}
\label{inclusion}
\Delta_{\varphi_o} \subseteq (\varphi_o + \overline{B}(0,{D(\varphi_o)})^{\Gamma}) \cap \mathscr{F}_o(\Gamma).
\end{equation}
It thus follows from Tychonoff's theorem that $\Delta_{\varphi_o}$ is relatively compact (and hence compact and metrizable) with respect to the product topology. This proves (i).

\item Now assume that $\Delta_{\varphi_o}$ admits a $\Gamma$-invariant probability measure $\mu$. By \eqref{inclusion} the integral
\[
\psi(\gamma) = \int_{\Delta_{\varphi_o}} \varphi(\gamma) \, d\mu(\varphi)
\]
converges and satisfies $\|\psi - \varphi_o\|_\infty \leq D(\varphi_o)$ for every $\gamma \in \Gamma$. Moreover, invariance of $\mu$ yields
\begin{align*}
\psi(\gamma_1\gamma_2) 
& = \int_{\Delta_{\varphi_o}} \varphi(\gamma_1 \gamma_2) \, d\mu(\varphi) = \int_{\Delta_{\varphi_o}} \gamma_2.\varphi(\gamma_1) +\varphi(\gamma_2) \, d\mu(\varphi)
\\[0.2cm]
&= \int_{\Delta_{\varphi_o}} \varphi(\gamma_1) \, d\mu(\varphi) + \int_{\Delta_{\varphi_o}} \varphi(\gamma_2) \, d\mu(\varphi) = \psi(\gamma_1) + \psi(\gamma_2),
\end{align*}
for all $\gamma_1, \gamma_2 \in \Gamma$, which shows that $\psi \in \Hom(\Gamma, \bR)$ and hence $\varphi$ is cohomologically trivial. We thus obtain (iii) by contraposition.
\end{proof}
\subsection{Harmonic quasimorphisms}
From now on let $(\Gamma, p)$ be a symmetric measured group and let $\phi_o: \Gamma \to \bR$ be a $p$-integrable quasimorphism. Note that integrability holds automatically if $p$ is finitely supported or if $\Gamma$ is finitely-generated and the word length with respect to some finite generating set is $p$-integrable. We will write \[
\mathcal L_{p}(\varphi_o) = \{\psi \in \mathscr{F}_o(\Gamma) \mid p \ast \psi = \psi, \; \|\psi-\varphi_o\|_\infty < \infty\}\]
for the space of left-harmonic quasimorphisms at bounded distance from $\varphi_o$ and $\mathcal R_{p}(\varphi_o)$ for the space of right-harmonic quasimorphisms at bounded distance from $\varphi_o$. To show that these spaces are non-empty we can make use of a $p$-stationary probability measure on $\Delta_{\varphi_o}$. Note that such measures exist in view of Proposition \ref{QMTrivialities}.(i). 
\begin{proposition}\label{ProduceHarmonic} For every  $\nu \in \Prob_p(\Delta_{\varphi_o})$ the function $\psi_\nu: \Gamma \to \bR$ given by
\[
\psi_\nu(\gamma) := \int_{\Delta_{\varphi_o}} \varphi(\gamma) \, d\nu(\varphi), \quad \gamma \in \Gamma.
\]
satisfies $\psi_\nu \in \cR_p(\varphi_o)$. In particular, $\cR_p(\varphi_o) \neq \emptyset$.
\end{proposition}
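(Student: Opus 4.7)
The plan is to verify two properties of $\psi_\nu$: that it lies at bounded sup-distance from $\varphi_o$, and that $\psi_\nu \ast p = \psi_\nu$. The bounded distance is immediate from Proposition \ref{QMTrivialities}(ii): each $\varphi \in \Delta_{\varphi_o}$ satisfies $|\varphi(\gamma) - \varphi_o(\gamma)| \leq D(\varphi_o)$, so integrating against $\nu$ gives $|\psi_\nu(\gamma) - \varphi_o(\gamma)| \leq D(\varphi_o)$ for every $\gamma \in \Gamma$; since $\psi_\nu(e) = 0$, we also have $\psi_\nu \in \mathscr{F}_o(\Gamma)$.

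For harmonicity, I would apply $p$-stationarity of $\nu$ to the bounded functional $f_h(\varphi) := \varphi(h)$ together with the cocycle identity $(k.\varphi)(h) = \varphi(hk) - \varphi(k)$ to obtain
\[
\psi_\nu(h) \;=\; \sum_{k \in \Gamma} p(k) \int_{\Delta_{\varphi_o}} \bigl[\varphi(hk) - \varphi(k)\bigr]\, d\nu(\varphi) \;=\; (\psi_\nu \ast p)(h) \;-\; \alpha,
\]
where $\alpha := \sum_k p(k) \psi_\nu(k)$ does not depend on $h$. Thus $\psi_\nu \ast p - \psi_\nu$ is the constant $\alpha$, and the proposition reduces to showing that this additive drift vanishes.

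The main obstacle is ruling out this drift, and this is the step that genuinely uses both the symmetry of $p$ and the quasimorphism property of $\varphi_o$. Iterating the previous identity gives $\psi_\nu \ast p^{\ast n} = \psi_\nu + n\alpha$, so evaluating at $e$ yields $\bE[\psi_\nu(Z_n)] = n\alpha$. By bounded distance this quantity differs from $\bE[\varphi_o(Z_n)]$ by at most $D(\varphi_o)$, and I would bound the latter uniformly in $n$ by combining two observations: first, the symmetry of $p$ implies that $Z_n$ and $Z_n^{-1}$ are identically distributed (reverse the i.i.d.\ sequence of increments and use $p = p \circ \mathrm{inv}$); second, every normalized quasimorphism satisfies the approximate antisymmetry $|\varphi_o(g) + \varphi_o(g^{-1})| \leq D(\varphi_o)$ (apply the defect bound to $g \cdot g^{-1}$ and use $\varphi_o(e) = 0$). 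Together these give $2|\bE[\varphi_o(Z_n)]| = |\bE[\varphi_o(Z_n) + \varphi_o(Z_n^{-1})]| \leq D(\varphi_o)$, hence $|n \alpha| \leq \tfrac{3}{2} D(\varphi_o)$ for every $n \in \bN$, forcing $\alpha = 0$ and completing the proof.
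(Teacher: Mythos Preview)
Your proof is correct and follows essentially the same strategy as the paper's: reduce right-$p$-harmonicity to showing the constant drift $\alpha = \psi_\nu * p - \psi_\nu$ vanishes, iterate to get $\bE[\psi_\nu(Z_n)] = n\alpha$, and then bound this uniformly in $n$ by combining symmetry of $p$ with antisymmetry. The only cosmetic difference is that the paper first passes to an exactly antisymmetric representative $\eta_o \in [\varphi_o]$ (via Remark~\ref{QMBasics}) so that $\sum_\gamma p^{*n}(\gamma)\eta_o(\gamma) = 0$ on the nose, whereas you work directly with the approximate antisymmetry $|\varphi_o(g)+\varphi_o(g^{-1})| \le D(\varphi_o)$ together with $Z_n \overset{d}{=} Z_n^{-1}$.
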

\begin{proof} By Proposition \ref{QMTrivialities}.(ii) we have $\|\psi_\nu - \phi_o\|_\infty \leq D(\phi_o)$. Moreover, since  since $p$ is symmetric and $\nu$ is $p$-stationary, we have
\begin{align*}
(\psi_\nu * p)(\gamma)
&= \sum_{\gamma' \in \Gamma} p(\gamma')  \psi_\nu(\gamma \gamma') = \sum_{\gamma \in \Gamma} p^{*n}(\gamma) \int_{\Delta_{\varphi_o}} \varphi(\gamma) \, \mathrm{d}\nu(\varphi) 
\sum_{\gamma' \in \Gamma} p(\gamma') \int_{\Delta_{\varphi_o}} \varphi(\gamma \gamma') \, \mathrm{d}\nu(\varphi) \\[0.2cm]
&=
\sum_{\gamma' \in \Gamma} p(\gamma') \int_{\Delta_{\varphi_o}} ((\gamma'.\varphi)(\gamma) + \varphi(\gamma')) \, \mathrm{d}\nu(\varphi)\\
&=  \int_{\Delta_{\varphi_o}} \varphi(\gamma) \, \mathrm{d}\nu(\varphi) +  \sum_{\gamma \in \Gamma} p^{*n}(\gamma) \int_{\Delta_{\varphi_o}} \varphi(\gamma) \, \mathrm{d}\nu(\varphi) 
%&= \psi_\nu(\gamma) + \ell_\nu(1) = \psi_\nu(\gamma), \quad \textrm{for all $\gamma \in \Gamma$}.
\end{align*}
for all $\gamma \in \Gamma$. If we abbreviate
\[
\ell_\nu(n) := \sum_{\gamma \in \Gamma} p^{*n}(\gamma) \int_{\Delta_{\varphi_o}} \varphi(\gamma) \, \mathrm{d}\nu(\varphi) \quad (n \in \bN),
\]
then this can be written as
\[
\psi_\nu * p = \psi_\nu + \ell_\nu(1),
\]
and we thus have to show that $\ell_\nu(1) = 0$. Note that for all $m,n \in \bN$ we have
\begin{align*}
\ell_\nu(m+n) 
&= \sum_{\gamma_1, \gamma_2 \in \Gamma} p^{*m}(\gamma_1) p^{*n}(\gamma_2) \int_{\Delta_{\varphi_o}} \varphi(\gamma_1 \gamma_2) \, d\nu(\varphi) \\[0.2cm]
&=
\sum_{\gamma_1, \gamma_2 \in \Gamma} p^{*m}(\gamma_1) p^{*n}(\gamma_2) \int_{\Delta_{\varphi_o}} (\gamma_2.\varphi(\gamma_1) + \varphi(\gamma_2)) \, d\nu(\varphi) \\[0.2cm]
&= \ell_\nu(m) + \ell_\nu(n)
\end{align*}
by $p$-stationarity of $\nu$. In particular, $\ell_\nu(n) = n \ell_\nu(1)$ and thus it remains to show only that the sequence $(\ell_\nu(n))$ is bounded. 

By Remark \ref{QMBasics} there exists an antisymmetric quasimorphism $\eta_o$ which is at bounded distance from $\varphi_o$. By Proposition \ref{QMTrivialities}.(ii) there then exists a constant $M>0$ such that $\|\varphi-\eta_o\|_\infty \leq M$ for all $\varphi \in \Delta_{\varphi_o}$. Moreover, $\eta_o$ is $p$-integrable, since $\varphi_o$ is. Since $\eta_o$ is antisymmetric and $p$ is symmetric, we have
\[
\sum_{\gamma \in \Gamma} p^{*n}(\gamma) \eta_o(\gamma) = 
\sum_{\gamma \in \Gamma} p^{*n}(\gamma^{-1}) \eta_o(\gamma^{-1}) = - \sum_{\gamma \in \Gamma} p^{*n}(\gamma) \eta_o(\gamma),
\]
hence $\sum_{\gamma \in \Gamma} p^{*n}(\gamma) \eta_o(\gamma) = 0$ for all $n$. We conclude that
\begin{align*}
|\ell_\nu(n)|
&= \left| \sum_{\gamma \in \Gamma} p^{*n}(\gamma) \int_{\Delta_{\varphi_o}} \varphi(\gamma) \, d\nu(\varphi) \right| \\[0.2cm]
&= \left| \sum_{\gamma \in \Gamma} p^{*n}(\gamma) \eta_o(\gamma) + \sum_{\gamma \in \Gamma}p^{*n}(\gamma) \int_{\Delta_{\varphi_o}} (\varphi - \eta_o)(\gamma) \, d\nu(\varphi) \right| \leq M \end{align*}
so $(\ell_\nu(n))$ is bounded. This shows that $\ell_\nu(1) = 0$ and finishes the proof.
\end{proof}
\begin{remark}[Bi-harmonic quasimorphisms] \label{Rmk.BiHarm} We observe that since $p$ is symmetric, a quasimorphism $\phi_o$ is right-$p$-harmonic if and only if $\check\phi_o$ is left-$p$-harmonic. We deduce that also $\mathcal L_p(\varphi_o) \neq \emptyset$. If we choose $\psi_o \in \mathcal L_p(\varphi_o)$ and $\nu \in \Prob_p(\Delta_{\psi_o})$, then applying the construction of Proposition \ref{ProduceHarmonic} to $\psi_o$ instead of $\varphi_o$ produces a quasimorphism $\psi_1 \in \mathcal L_p(\varphi_o) \cap  \mathcal R_p(\varphi_o)$. This shows that  $\mathcal L_p(\varphi_o) \cap  \mathcal R_p(\varphi_o) \neq \emptyset$. Using double ergodicity of the Poisson boundary one can in fact show that $\mathcal L_p(\varphi_o) \cap  \mathcal R_p(\varphi_o) = \{\psi_1\}$ for some antisymmetric quasimorphism $\psi_1$ (\cite[Proposition 2.3]{BH11}), but we will not need this fact here.
\end{remark}

\section{Stationary measures on spaces of left-harmonic quasimorphisms}\label{SecCore}
\subsection{The left-harmonic core}
Let $(\Gamma, p)$ be a symmetric measured group and let $\phi_o: \Gamma \to \bR$ be a $p$-integrable quasimorphism. In this section we are going to classify $p$-stationary measures on the space $\cL_p(\varphi_o)$ of left-$p$-harmonic quasimorphisms equivalent to $\varphi_o$. Our key tool is the following space:
\begin{construction}[Left-harmonic core]\label{LeftHarmonic} If $\psi_o \in \mathcal L_p(\varphi_o)$, then\footnote{Note that these statements are not true for $\mathcal R_p(\varphi_o)$ instead of $\mathcal L_p(\varphi_o)$.}  $\gamma.\psi_o$ is left-harmonic for every $\gamma \in \Gamma$. It follows with Proposition \ref{QMTrivialities}.(ii) that $\mathcal L_p(\varphi_o)$ is $\Gamma$-invariant and $\Delta_{\psi_o} \subset \mathcal L_p(\varphi_o)$ for all $\psi_o \in \mathcal L_p(\varphi_o)$. For every $\psi_o \in \mathcal L_p(\varphi_o)$ we thus have an inclusion of $\Gamma$-spaces
\[
\mathcal C_p(\varphi_o):=  \bigcap_{\psi \in \mathcal L_p(\varphi_o)} \Delta_{\psi} \subset \Delta_{\psi_o} \subset \mathcal L_p(\varphi_o).
\]
We refer to $\mathcal C_p(\varphi_o)$ as the \emph{left-$p$-harmonic core} of $\varphi_o$.
\end{construction}
\begin{theorem}\label{QuasiMain}
Let  $\mathcal C_p(\varphi_o) \subset \mathcal L_p(\varphi_o)$ denote the left-$p$-harmonic core of $\varphi_o$.
\begin{enumerate}[(i)]
\item Every $p$-stationary probability measure on $\mathcal L_p(\varphi_o)$ is supported on $\mathcal C_p(\varphi_o)$. \vspace{0.1cm}
\item $\mathcal C_p(\varphi_o)$ is compact, non-empty and admits a unique $p$-stationary measure $\nu_{\varphi_o}$. \vspace{0.1cm}
\item $(\mathcal L_p(\varphi_o), \nu_{\varphi_o})$ is a $(\Gamma, p)$-boundary, which is non-trivial if $\psi_o$ is cohomologically non-trivial.
\end{enumerate}
\end{theorem}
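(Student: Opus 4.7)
Everything will follow from a single martingale-convergence observation. For $\varphi, \psi \in \mathcal{L}_p(\varphi_o)$, the difference $h := \varphi - \psi$ is bounded and left-$p$-harmonic, so along the reversed walk $Z_n^r(\omega) := \omega_n \cdots \omega_1$ the process $h(Z_n^r)$ is a bounded martingale under $\bP_p$. Applying Lemma \ref{Lemma_Poisson} to $\check h$ (which is right-$p$-harmonic since $p$ is symmetric) gives the Poisson representation $h(\alpha) = \int_B \hat h(\alpha^{-1}.b)\,dm(b)$. Because $p$ is symmetric, $(Z_n^r)^{-1} = \omega_1^{-1}\omega_2^{-1}\cdots\omega_n^{-1}$ is itself a forward random walk with step distribution $p$, so $(Z_n^r(\omega))^{-1}.b$ converges for $m$-a.e. $b$ to a common boundary point $b_\infty(\omega) \in B$. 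Dominated convergence then yields, for every fixed $\gamma \in \Gamma$ and $\bP_p$-a.s.\ $\omega$,
\[
(Z_n^r.\varphi)(\gamma) - (Z_n^r.\psi)(\gamma) = h(\gamma Z_n^r) - h(Z_n^r) \xrightarrow[n \to \infty]{} 0;
\]
by countable union, the convergence is simultaneous in $\gamma \in \Gamma$. I refer to this as the Key Lemma.

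\textbf{Part (i).} Fix $\nu \in \Prob_p(\mathcal{L}_p(\varphi_o))$ and, for each $\varphi \in \mathcal{L}_p(\varphi_o)$, denote by $\rho_n^\varphi := p^{*n} * \delta_\varphi \in \Prob(\Delta_\varphi)$ the law of $Z_n^r.\varphi$. The Key Lemma and dominated convergence imply that $\int f\,d\rho_n^\varphi - \int f\,d\rho_n^\psi \to 0$ for every bounded product-continuous $f$ and every $\psi \in \mathcal{L}_p(\varphi_o)$. Any weak-$*$ accumulation point $\sigma^\varphi$ of the Ces\`aro averages $\bar\rho_N^\varphi := \tfrac{1}{N}\sum_{n < N} \rho_n^\varphi$ exists in $\Prob(\Delta_\varphi)$ by compactness, is $p$-stationary by Krylov--Bogolyubov, and by the above it also lies in $\Prob_p(\Delta_\psi)$ for every $\psi \in \mathcal{L}_p(\varphi_o)$; hence it is supported on $\mathcal{C}_p(\varphi_o) = \bigcap_\psi \Delta_\psi$. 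Since $p$-stationarity of $\nu$ gives $\nu = \int \bar\rho_N^\varphi\,d\nu(\varphi)$ for every $N$, passing to the limit exhibits $\nu = \int \sigma^\varphi\,d\nu(\varphi)$ as a mixture of measures supported on $\mathcal{C}_p(\varphi_o)$, which proves (i).

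\textbf{Parts (ii) and (iii).} Compactness of $\mathcal{C}_p(\varphi_o)$ is automatic as an intersection of compacts, and non-emptiness follows from (i) applied to any $p$-stationary measure on some $\Delta_\psi$ (such measures exist by Krylov--Bogolyubov, and $\mathcal{L}_p(\varphi_o) \neq \emptyset$ by Remark \ref{Rmk.BiHarm}). For (iii), the Key Lemma encodes measure-theoretic proximality of the $\Gamma$-action on any $p$-stationary $(\mathcal{C}_p(\varphi_o), \nu)$: for $\nu \otimes \nu$-a.e.\ pair $(\varphi, \psi)$, the translates $Z_n^r.\varphi$ and $Z_n^r.\psi$ become asymptotically equal in product topology. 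By Furstenberg's characterization, a proximal $p$-stationary space is a $(\Gamma, p)$-boundary; this both identifies $(\mathcal{C}_p(\varphi_o), \nu)$ as a $\Gamma$-equivariant measurable factor of $(B, m)$ and, since the Poisson boundary carries a unique $p$-stationary measure, yields the uniqueness of $\nu_{\varphi_o}$ in (ii). Non-triviality is immediate: if $\nu_{\varphi_o} = \delta_{\varphi_*}$ for some $\Gamma$-fixed $\varphi_* \in \mathcal{C}_p(\varphi_o)$, then $\gamma.\varphi_* = \varphi_*$ translates to $\varphi_*(\alpha\gamma) = \varphi_*(\alpha) + \varphi_*(\gamma)$, so $\varphi_*$ is a homomorphism equivalent to $\varphi_o$, contradicting cohomological non-triviality.

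\textbf{Main obstacle.} The delicate step is (iii): promoting the coordinatewise convergence of the Key Lemma to the measure-valued collapse $(Z_n^{-1})_* \nu_{\varphi_o} \to \delta_{\varphi_\infty(\omega)}$ needed to realize $(\mathcal{C}_p(\varphi_o), \nu_{\varphi_o})$ as a $(\Gamma, p)$-boundary, in particular exchanging the reversed-walk statement with the forward walk via symmetry of $p$ and leveraging the uniform bound $\Delta_\psi \subset \psi + \overline{B}(0, D(\psi))$ to upgrade pointwise convergence to weak-$*$ convergence in $\Prob(\mathcal{C}_p(\varphi_o))$. Once this contraction is secured, the uniqueness in (ii) and the boundary property in (iii) fall out together from Furstenberg's disintegration machinery.
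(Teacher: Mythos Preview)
Your approach differs substantially from the paper's (which proves the joining lemma via Zimmer's cocycle-reduction theorem for bounded $\bR$-valued cocycles, and establishes the boundary property by an explicit Poisson-transform construction on a right-harmonic representative), and the martingale strategy is attractive. But there are two genuine gaps.

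First, the justification of the Key Lemma is not rigorous as written. You assert that $(Z_n^r)^{-1}.b \to b_\infty(\omega)$ for $m$-a.e.\ $b$, but Lemma~\ref{Lemma_Poisson}(ii) only yields the weak-$*$ statement $\int f(W_n.b)\,dm(b) \to f(W_\infty)$ for $f$ in a countable family; pointwise orbit convergence on the abstract Poisson boundary needs a topological model and does not follow. Even granting such a model, your dominated-convergence step treats $\hat h$ as continuous at $b_\infty$, whereas $\hat h \in L^\infty(B,m)$ is only defined up to null sets. What you actually need is $(W_n)_*(\gamma^{-1}_*m) \to \delta_{W_\infty}$ for every $\gamma$; this is true for Poisson boundaries but is a nontrivial consequence of the SAT property rather than of Lemma~\ref{Lemma_Poisson}. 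Without it, the computation gives $h(\gamma Z_n^r)-h(Z_n^r) \to \hat h(\gamma^{-1}.W_\infty)-\hat h(W_\infty)$, which is not zero unless $\hat h$ is $\Gamma$-invariant.

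Second, even granting the Key Lemma, you have coalescence only along the \emph{reversed} walk $Z_n^r$, whereas Furstenberg's criterion for a $(\Gamma,p)$-boundary requires $(Z_n)_*\nu \to \delta_{\xi(\omega)}$ along the \emph{forward} walk. You flag this as the ``main obstacle'' but do not resolve it: symmetry of $p$ equates the one-dimensional laws of $Z_n$ and $Z_n^r$, not their laws as processes, so the a.s.\ convergence does not transfer. Since you deduce uniqueness in (ii) from (iii), this leaves (ii) unproved as well. (Uniqueness is in fact recoverable directly from your Key Lemma: from $\rho_n^\varphi(f)-\rho_n^{\psi_0}(f)\to 0$ for all $\varphi$ and dominated convergence one gets $\nu_1(f)=\nu_2(f)$ for any two stationary $\nu_1,\nu_2$ without invoking the boundary property. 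But (iii) itself remains open in your outline.) The paper avoids both issues: the joining argument gives uniqueness with no walk-direction subtlety, and the boundary map $\pi_\alpha:(B,m)\to\Delta_{\psi_o}$ is constructed explicitly, so no proximality criterion is needed.
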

The remainder of this section is devoted to the proof of Theorem \ref{QuasiMain}.
\subsection{Ergodic joinings of stationary measures}
We keep the notation of Theorem \ref{QuasiMain}. Moreover, we fix $\psi_o^{(1)}, \psi_o^{(2)} \in \cL_p(\phi_o)$. We abbreviate $\Delta_i := \Delta_{ \psi_o^{(i)} }$. 
and denote by $\mathrm{pr_i}: \Delta_1 \times \Delta_2 \to \Delta_i$ the coordinate projections. Since $\Delta_1$ and $\Delta_2$
are compact we can find $\nu_i \in \mathrm{Prob}_p(\Delta_i)$. For any such measures $\nu_1, \nu_2$ the space
\[
M := \{\eta \in \Prob(\Delta_1 \times \Delta_2) \mid \mathrm{pr_1}_* \eta = \nu_1 \text{ and } \mathrm{pr_2}_* \eta = \nu_2\}
\]
is weak*-compact, convex and $p$-invariant, hence admits a $p$-invariant element, which we may assume to be $\Gamma$-ergodic. We refer to such a probability measure $\eta$ as an \emph{ergodic joining} of $\nu_1$ and $\nu_2$.
\begin{lemma}\label{EtaJoining} If $\nu_i \in \mathrm{Prob}_p(\Delta_i)$ and $\eta$ is an ergodic joining of $\nu_1$ and $\nu_2$, then
for $\eta$-almost every $(\psi_1, \psi_2) \in \Delta_{1} \times \Delta_{2}$ we have $\psi_1 = \psi_2$. 
\end{lemma}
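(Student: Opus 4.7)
The plan is to extract a $\Gamma$-invariant statistic from the cocycle $F_\gamma(\psi_1,\psi_2) := \psi_1(\gamma) - \psi_2(\gamma)$ on $(\Delta_1 \times \Delta_2, \eta)$, use ergodicity to reduce to showing this statistic vanishes, and force triviality via the bi-harmonic structure of the averaged cocycle.

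First I would verify that $F$ is a uniformly bounded $\bR$-valued cocycle of $\Gamma$: the cocycle identity is inherited from \eqref{CanonicalCocycle} applied to each coordinate, and uniform boundedness follows from Proposition \ref{QMTrivialities}.(ii) applied to each hull $\Delta_i \subset [\phi_o]$. I would then introduce the Borel functions $D(x) := \sup_\gamma F_\gamma(x)$ and $E(x) := \inf_\gamma F_\gamma(x)$ and use the identity $F_\gamma(\gamma_0.x) = F_{\gamma\gamma_0}(x) - F_{\gamma_0}(x)$ (a direct consequence of the $\Gamma$-action \eqref{ActionOfGamma}) to deduce $D(\gamma_0.x) = D(x) - F_{\gamma_0}(x)$ and likewise for $E$. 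Hence the oscillation $D - E$ is $\Gamma$-invariant; by ergodicity of $\eta$, $D - E \equiv c_0$ is $\eta$-a.s.\ constant with $c_0 \geq 0$. If $c_0 = 0$, then $\gamma \mapsto F_\gamma(x)$ is $\eta$-a.s.\ constant in $\gamma$, and since $F_e \equiv 0$ this constant is zero, giving $\psi_1 = \psi_2$ $\eta$-a.s.\ as desired.

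The main obstacle is ruling out $c_0 > 0$. Assuming $c_0 > 0$, I would first observe that $\beta(x) := -E(x)$ satisfies $\beta(\gamma.x) - \beta(x) = F_\gamma(x)$, so the cocycle $F$ is a bounded measurable coboundary on $(\Delta_1 \times \Delta_2, \eta)$. In parallel, the first-moment function $h(\gamma) := \int F_\gamma \, d\eta = \bar\psi_1(\gamma) - \bar\psi_2(\gamma)$ (with $\bar\psi_i(\gamma) := \int \psi(\gamma) \, d\nu_i(\psi)$) is bounded and $p$-bi-harmonic with $h(e) = 0$: right-harmonicity follows from Proposition \ref{ProduceHarmonic} applied to each $\bar\psi_i$, left-harmonicity from averaging the relation $p * \psi_i = \psi_i$, and boundedness from Proposition \ref{QMTrivialities}.(ii). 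The hardest step --- which I expect to be the principal technical difficulty --- is to combine this bounded-coboundary identity with the bi-harmonic structure of $h$, together with double ergodicity of the Poisson boundary of $(\Gamma, p)$ (as in \cite[Proposition 2.3]{BH11}), to conclude that $F \equiv 0$ $\eta$-a.s., yielding the desired contradiction with $c_0 > 0$.
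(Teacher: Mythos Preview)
Your first five steps are correct and in fact cleaner than the paper's argument at that stage: the paper invokes Zimmer's cocycle-reduction theorem \cite[Theorem 3.1]{Zimmer} to produce a transfer function $u$ with $F_\gamma(x) = u(x) - u(\gamma.x)$, and then has to work separately (via the ergodic theorem for stationary actions) to show that $u$ is essentially bounded. Your explicit $\beta = -E$ (or equally $D$) gives a bounded Borel transfer function directly, which is a genuine simplification.

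The gap is in your final step. Passing to the averaged function $h(\gamma) = \int F_\gamma \, d\eta$ loses too much: even if double ergodicity of the Poisson boundary forces the bounded bi-harmonic function $h$ to vanish identically, this only says $\int F_\gamma \, d\eta = 0$ for every $\gamma$, not that $F_\gamma = 0$ $\eta$-a.e. The coboundary structure of $F$ is perfectly compatible with $F$ being nonzero while having zero integral (any nonconstant bounded $\beta$ produces such an $F$), so this line does not close.

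The paper's route --- which grafts naturally onto your $\beta$ --- is to use left-$p$-harmonicity of the $\psi_i$ \emph{pointwise} on $\Delta_1 \times \Delta_2$ rather than only after integrating against $\eta$. Since $p$ is symmetric and $p * \psi_i = \psi_i$, one has $\sum_{\gamma} p(\gamma)\psi_i(\gamma) = \psi_i(e) = 0$ for every $\psi_i \in \Delta_i$, and hence $\sum_\gamma p(\gamma) F_\gamma(x) = 0$ for \emph{every} $x$. Combined with your coboundary identity $F_\gamma(x) = \beta(\gamma.x) - \beta(x)$ this gives $\sum_\gamma p(\gamma) \beta(\gamma.x) = \beta(x)$, so $\beta$ is a bounded $p$-harmonic function on the ergodic $(\Gamma,p)$-space $(\Delta_1 \times \Delta_2, \eta)$. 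By \cite[Lemma 3.3]{B17} such a function is essentially $\Gamma$-invariant, hence $\eta$-a.s.\ constant, and therefore $F_\gamma \equiv 0$ $\eta$-a.e. No case split on $c_0$ and no appeal to the Poisson boundary are needed.
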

\begin{proof} We first observe that if $\psi_i \in \Delta_i$ for $i \in \{1,2\}$, then $\psi_1,\psi_2$ (and hence $\psi_1-\psi_2$) are left-$p$-harmonic (see Construction \ref{LeftHarmonic}) and $\|\psi_1-\psi_2\|_\infty \leq M$ for some uniform constant $M$ by Proposition \ref{QMTrivialities}.(ii). In particular, the continuous cocycle $\beta: \Gamma \times (\Delta_1 \times \Delta_2) \to \bR$ given by $\beta(\gamma, (\psi_1, \psi_2)) := \psi_1(\gamma)-\psi_2(\gamma)$ is uniformly bounded. By \cite[Theorem 3.1]{Zimmer} (and since $\bR$ has no non-trivial compact subgroups) it is thus equivalent to the trivial cocycle, i.e.\ there exists a Borel function $u: \Delta_1 \times \Delta_2 \to \bR$ such that 
\[
\beta(\gamma, (\psi_1, \psi_2)) = u(\psi_1, \psi_2) - u(\gamma.(\psi_1, \psi_2))
\]
for $\eta$-almost all $(\psi_1, \psi_2)$ and all $\gamma \in \Gamma$. For $M_1>M_o>0$ we consider the sets
\[
A := \{x \in \Delta_1 \times \Delta_2 \mid |u(x)| < M_o\} \qand B:= \{x \in \Delta_1 \times \Delta_2 \mid |u(x)|> M_1\}.
\]
Then $\eta(A) >0$ for all sufficiently large $M_o$, and if $u$ is not essentially bounded, then also $\eta(B)>0$ for all $M_1 > M_o$. In this case the ergodic theorem for stationary actions \cite[Lemma 4.1]{B18} yields
\[
\frac{1}{N} \sum_{n=1}^N \sum_{\gamma \in \Gamma} p^{\ast n}(\gamma) \eta(A \cap \gamma^{-1}.B) \to \eta(A)\eta(B) >0,
\]
hence there exists some $\gamma \in \Gamma$ with $A \cap \gamma^{-1}.B \neq \emptyset$. For any $(\psi_1, \psi_2) \in A \cap \gamma^{-1}.B$ we then have
\[
|\beta(\gamma, (\psi_1, \psi_2))|  = |u(\psi_1, \psi_2) - u(\gamma.(\psi_1, \psi_2))| > M_1-M_o,
\]
contradicting boundedness of $\beta$. Thus $u \in L^\infty(\Delta_1 \times \Delta_2, \eta)$ is essentially bounded. On the other hand, for $\eta$-almost all $(\psi_1, \psi_2)$ we have (by left-$p$-harmonicity of $\psi_1-\psi_2$)
\begin{align*}
0 &= \sum_{\gamma \in \Gamma}p(\gamma)(\psi_1-\psi_2)(\gamma)  =  \sum_{\gamma \in \Gamma}p(\gamma) \beta(\gamma, (\psi_1, \psi_2))\\[0.2cm]
&=  \sum_{\gamma \in \Gamma}p(\gamma)(u(\psi_1, \psi_2) - u(\gamma.(\psi_1, \psi_2))  = u(\psi_1, \psi_2) - p\ast u(\psi_1, \psi_2),
\end{align*}
i.e. $u$ is an essentially bounded left-$p$-harmonic function. By 
\cite[Lemma 3.3]{B17}, this implies that $u$ is essentially $\Gamma$-invariant, and since $\eta$ is ergodic, it follows that $u$ is essentially constant. This implies that $\beta(\gamma, (\psi_1, \psi_2)) = 0$ for $\eta$-almost all $(\psi_1, \psi_2)$, and thus $\psi_1(\gamma) = \psi_2(\gamma)$ for all $\gamma \in \Gamma$ and for $\eta$-almost all $(\psi_1,\psi_2)$.
\end{proof}

\subsection{Unique stationarity}
From Lemma \ref{EtaJoining} we can now easily deduce Parts (i) and (ii) of Theorem \ref{QuasiMain}: 

\begin{proof}[Proof of Theorem \ref{QuasiMain}, Parts (i) and (ii)]
Firstly, if $\psi_o \in \cL_p(\phi_o)$, then $\Delta_{\psi_o}$ is compact, hence admits a $p$-stationary probability measure $\nu_{\psi_o}$. If $\nu$ is another $p$-stationary probability measure, then there exists a joining $\eta$ of $\nu_{\psi_o}$ and $\nu$ as in Lemma \ref{EtaJoining}, and hence $\eta$ is the diagonal self-joining of $\nu = \nu_{\psi_o}$. This shows that 
$\mathrm{Prob}_p(\Delta_{\psi_o}) = \{\nu_{\psi_o}\}$. \\

Now let  $\psi_1, \psi_2 \in \cL_p(\phi_o)$ with hulls $\Delta_1$, $\Delta_2$ and unique stationary measures $\nu_1$ and $\nu_2$. Let $\eta$ be the corresponding joining as in Lemma \ref{EtaJoining}. Then for $\nu_1$-almost every $\psi_1 \in \Delta_1$ we have $\psi_1 = \psi_2$ for some $\psi_2\in \Delta_2$, and hence $\nu_1(\Delta_1 \cap \Delta_2) = 1$. In particular, $\Delta_1 \cap \Delta_2 \neq \emptyset$. Using induction and compactness of hulls we deduce that in fact $\mathcal C_p(\varphi_o) \neq \emptyset$ and $\nu_1(\mathcal C_p(\varphi_o))= 1$. Thus, if we set $\nu_{\varphi_o} := \nu_1|_{\mathcal C_p(\varphi_o)}$, then this defines a stationary probability measure on $\mathcal C_p(\varphi_o)$ and in particular a stationary probability measure on $\Delta_{\psi_o}$ for every $\psi_o \in \cL_p(\phi_o)$. Since $|\mathrm{Prob}_p(\Delta_{\psi_o})| = 1$, this measure is actually the unique $p$-stationary probability measure on $\Delta_{\psi_o}$; this establishes Parts (i) and (ii) of Theorem \ref{QuasiMain}.
\end{proof}

\subsection{Right-harmonic quasimorphisms and boundaries}
We now turn to the proof of Part (iii) of Theorem \ref{QuasiMain}. In view of Part (ii) of the theorem it remains to show only that for \emph{some} $\psi_o \in \mathcal L_p(\phi_o)$ the pair $(\Delta_{\psi_o}, \nu_{\varphi_o})$, where $\nu_{\varphi_o}$ is the unique $p$-stationary probability measure on $\Delta_{\psi_o}$, is a $p$-boundary and that this $p$-boundary is non-trivial if $\psi_o$ is cohomologically non-trivial. We will choose $\psi_o \in \mathcal L_p(\phi_o) \cap  \mathcal R_p(\phi_o)$, which is possible by Remark \ref{Rmk.BiHarm}. Part (iii) of Theorem \ref{QuasiMain} is then a special case of the following general statement concerning \emph{right}-harmonic quasimorphisms:
\begin{lemma}\label{RightHarmonic} If $\psi_o \in \mathcal R_p(\varphi_o)$, then there exists $\nu_{\psi_o} \in \mathrm{Prob}_p(\Delta_{\psi_o})$ such that $(\Delta_{\psi_o}, \nu_{\psi_o})$ is a $p$-boundary. If $\psi_o$ is not cohomologically trivial, then this boundary is non-trivial.
\end{lemma}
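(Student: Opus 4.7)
The plan is to realize $(\Delta_{\psi_o},\nu_{\psi_o})$ explicitly as a $\Gamma$-equivariant measurable quotient of the Poisson boundary $(B,m)$ of $(\Gamma,p)$, invoking the correspondence between bounded right-$p$-harmonic functions and $L^\infty(B,m)$ supplied by Lemma \ref{Lemma_Poisson}. The right-$p$-harmonicity of $\psi_o$ powers this construction, because it implies that for every $\gamma \in \Gamma$ the translation cocycle
\[
h_\gamma(g) := \psi_o(\gamma g) - \psi_o(g)
\]
is a \emph{bounded} right-$p$-harmonic function on $\Gamma$: boundedness follows from $|h_\gamma(g) - \psi_o(\gamma)| \leq D(\psi_o)$, and $h_\gamma * p = h_\gamma$ from $\psi_o * p = \psi_o$. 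The family $\{h_\gamma\}_\gamma$ also satisfies the cocycle identity
\[
h_{\gamma_1 \gamma_2}(g) = h_{\gamma_1}(\gamma_2 g) + h_{\gamma_2}(g),
\]
which is exactly the relation needed to transfer the action \eqref{ActionOfGamma} on $\mathscr F_o(\Gamma)$ through the Poisson transform.

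Concretely, I would set $f_\gamma := \mathscr P_m^{-1}(h_\gamma) \in L^\infty(B,m)$ for each $\gamma \in \Gamma$ and define a Borel map $\Phi : B \to \mathscr F_o(\Gamma)$ by $\Phi(b)(\gamma) := f_\gamma(b)$ for $\gamma \neq e$ and $\Phi(b)(e) := 0$ (consistent since $h_e \equiv 0$). The desired equivariance $\Phi(\gamma_0.b) = \gamma_0.\Phi(b)$ amounts to the a.e.\ identity $f_\gamma(\gamma_0.b) = f_{\gamma\gamma_0}(b) - f_{\gamma_0}(b)$, and since $\mathscr P_m$ is an isometry it suffices to verify it after applying the Poisson transform, whereupon it reduces to the tautology
\[
\psi_o(\gamma\gamma_0 g) - \psi_o(\gamma_0 g) = \bigl[\psi_o(\gamma\gamma_0 g) - \psi_o(g)\bigr] - \bigl[\psi_o(\gamma_0 g) - \psi_o(g)\bigr].
\]
The exceptional $m$-null sets depend on the pair $(\gamma,\gamma_0)$, but countability of $\Gamma$ lets me take a single union and obtain equivariance off one $m$-null set.

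Next I need to confirm that $\Phi$ lands in $\Delta_{\psi_o}$, not merely in $\mathscr F_o(\Gamma)$. For this I would apply Lemma \ref{Lemma_Poisson} (ii) to the countable family $\cA := \{f_\gamma\}_{\gamma \in \Gamma}$, obtaining a conull $\Omega \subset \Gamma^{\bN}$ and a Borel $Z_\infty : \Omega \to B$ with $(Z_\infty)_*\bP_p = m$ such that for every $\gamma \in \Gamma$ and every $\omega \in \Omega$,
\[
(Z_n(\omega).\psi_o)(\gamma) = h_\gamma(Z_n(\omega)) \longrightarrow f_\gamma(Z_\infty(\omega)) = \Phi(Z_\infty(\omega))(\gamma).
\]
Thus $Z_n(\omega).\psi_o \to \Phi(Z_\infty(\omega))$ in the product topology; since each $Z_n(\omega).\psi_o$ lies in the $\Gamma$-orbit of $\psi_o$ and $\Delta_{\psi_o}$ is closed, $\Phi(Z_\infty(\omega)) \in \Delta_{\psi_o}$ almost surely, i.e.\ $\Phi(b) \in \Delta_{\psi_o}$ for $m$-a.e.\ $b$. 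Setting $\nu_{\psi_o} := \Phi_* m$, $(\Delta_{\psi_o},\nu_{\psi_o})$ is a $(\Gamma,p)$-boundary by the very definition of boundary as a $\Gamma$-factor of $(B,m)$.

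Finally, for the non-triviality statement I would argue by contraposition: if the boundary were trivial then $\nu_{\psi_o} = \delta_{\psi^*}$ for some $\Gamma$-fixed point $\psi^* \in \Delta_{\psi_o}$, and unwinding $\gamma.\psi^* = \psi^*$ via \eqref{ActionOfGamma} yields the homomorphism identity $\psi^*(\gamma'\gamma) = \psi^*(\gamma') + \psi^*(\gamma)$. Since $\psi^* \in \Delta_{\psi_o}$, Proposition \ref{QMTrivialities} (ii) gives $\|\psi^* - \psi_o\|_\infty \leq D(\psi_o)$, so $\psi_o$ would be cohomologically trivial, contrary to hypothesis. The main obstacle I anticipate is the bookkeeping in the equivariance step: Lemma \ref{Lemma_Poisson} encodes a right-translation Poisson transform, while \eqref{ActionOfGamma} is a left action by the formula $(\gamma.\varphi)(\gamma') = \varphi(\gamma'\gamma) - \varphi(\gamma)$, and one must carefully match the two so that the cocycle identity for $h_\gamma$ produces genuine $\Gamma$-equivariance of $\Phi$ (rather than a twisted or anti-equivariant variant).
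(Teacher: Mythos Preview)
Your proposal is correct and follows essentially the same strategy as the paper: both arguments define the boundary map via $\Phi(b)(\gamma)=\mathscr P_m^{-1}(h_\gamma)(b)$ with $h_\gamma(g)=\psi_o(\gamma g)-\psi_o(g)$, verify equivariance through the cocycle identity for $h_\gamma$, and use the convergence statement in Lemma~\ref{Lemma_Poisson}(ii) along random walk paths to place the image inside $\Delta_{\psi_o}$. The only notable difference is that the paper additionally straightens the almost-everywhere cocycle to a strict Borel cocycle via \cite[Proposition~B.9]{ZBook} before defining the map, and then argues at the level of $\supp(\nu_{\psi_o})$; your direct a.e.\ argument suffices for the conclusion and is in fact a bit cleaner.
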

We now work towards the proof of Lemma \ref{RightHarmonic}. For this we fix $\psi_o \in \mathcal R_p(\varphi_o)$, and denote by $(B,m)$ the Poisson boundary of the symmetric measured group $(\Gamma,p)$ (as in Lemma \ref{Lemma_Poisson}). 

\begin{lemma}\label{CocycleAlphaRight} There exists a Borel function $\alpha: \Gamma \times B \to \R$ with the following properties:
\begin{enumerate}[(i)]
\item $\alpha$ is a (strict) Borel cocycle, i.e. 
\[
\alpha(\gamma_1\gamma_2, b)  = \alpha(\gamma_1,\gamma_2.b) + \alpha(\gamma_2,b) 
\quad \textrm{for all $\gamma_1, \gamma_2 \in \Gamma$ and $b \in B$},
\]
and 
\[
\psi_o(\gamma) = \int_B \alpha(\gamma, b) \; \mathrm{d}m(b), \quad \textrm{for all $\gamma \in \Gamma$}.
\]
\item The map $\pi_\alpha : B \ra \mathscr{F}_o(\Gamma)$ given by $\pi_\alpha(b)(\gamma) := \alpha(\gamma,b)$ is a $\Gamma$-equivariant Borel map.
\item $\|\pi_\alpha(b) - \psi_o\|_\infty \leq D(\psi_o)$ for all $b \in B$. In particular, for every $b \in B$ the function $\pi_\alpha(b)$ is a quasimorphism on $\Gamma$ which
is equivalent to $\psi_o$.

\end{enumerate}
\end{lemma}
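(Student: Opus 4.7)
The plan is to construct $\alpha(\gamma,b)$ as the almost-sure limit of a bounded $\bR$-valued martingale obtained by evaluating $\psi_o$ along the random walk, and then to represent this limit as a Borel function on $B$ via the Poisson boundary property. The passage from an almost-sure cocycle identity to a strict pointwise one will be the only genuine technical obstacle.

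For each $\gamma\in\Gamma$, set
\[
M_n^\gamma(\omega) := \psi_o(\gamma Z_n(\omega)) - \psi_o(Z_n(\omega)), \quad n\geq 0.
\]
The quasimorphism inequality gives the uniform bound $|M_n^\gamma(\omega)-\psi_o(\gamma)|\leq D(\psi_o)$. Right-$p$-harmonicity of $\psi_o$ applied separately at $Z_{n-1}$ and at $\gamma Z_{n-1}$ shows that $(M_n^\gamma)_n$ is a martingale with respect to the natural filtration on $(\Gamma^\bN,\bP_p)$, so by the bounded martingale convergence theorem it converges $\bP_p$-a.s.\ to a limit $M_\infty^\gamma$. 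Equivalently, the function $E_\gamma\psi_o(h) := \psi_o(\gamma h) - \psi_o(h)$ is bounded and right-$p$-harmonic, hence by Lemma \ref{Lemma_Poisson} there is a unique $f_\gamma\in L^\infty(B,m)$ with $E_\gamma\psi_o = \mathscr P_m f_\gamma$, and then $M_\infty^\gamma = f_\gamma\circ Z_\infty$ $\bP_p$-a.s. Since $\mathscr P_m$ is an isometry and $\|E_\gamma\psi_o - \psi_o(\gamma)\|_\infty\leq D(\psi_o)$, we can choose a representative of $f_\gamma$ satisfying $|f_\gamma(b)-\psi_o(\gamma)|\leq D(\psi_o)$ everywhere on $B$. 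Evaluating the Poisson formula at $h=e$ gives $\int_B f_\gamma\,dm = E_\gamma\psi_o(e) = \psi_o(\gamma)$.

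The cocycle identity arises from the telescoping
\[
E_{\gamma_1\gamma_2}\psi_o(h) = E_{\gamma_1}\psi_o(\gamma_2 h) + E_{\gamma_2}\psi_o(h),
\]
which under $\mathscr P_m^{-1}$ and the intertwining $\mathscr P_m(f)(\gamma_2 h)=\mathscr P_m(\gamma_2^{-1}.f)(h)$ (where $(\gamma.f)(b) := f(\gamma^{-1}.b)$) translates, by injectivity of the Poisson transform, into
\[
f_{\gamma_1\gamma_2}(b) = f_{\gamma_1}(\gamma_2.b) + f_{\gamma_2}(b) \quad \text{for $m$-a.e.\ } b\in B.
\]
Alternatively, this identity can be extracted directly from the martingale picture by applying Lemma \ref{Lemma_Poisson}(ii) to the countable family $\{f_\gamma\}_{\gamma\in\Gamma}$ to evaluate the limit $\lim_n [\psi_o(\gamma_1\gamma_2 Z_n) - \psi_o(\gamma_2 Z_n)] = f_{\gamma_1}(\gamma_2.Z_\infty)$ $\bP_p$-almost surely.

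The main obstacle is upgrading this $m$-almost-everywhere identity to the strict pointwise identity demanded by (i). Since $\Gamma$ is countable, the union over $(\gamma_1,\gamma_2)\in\Gamma\times\Gamma$ of the null sets on which the cocycle identity fails, together with the null sets on which the pointwise bound fails, is still $m$-null; saturating under the (quasi-invariant) $\Gamma$-action yields a $\Gamma$-invariant Borel $m$-null set $N\subset B$. On the $\Gamma$-invariant conull Borel subset $B':=B\setminus N$, which is itself a model for the Poisson boundary of $(\Gamma,p)$, all relations hold strictly; replacing $B$ by $B'$ and setting $\alpha(\gamma,b):=f_\gamma(b)$ yields the required Borel strict cocycle. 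Since $M_n^e\equiv 0$ we have $f_e\equiv 0$, so $\pi_\alpha(b)(e)=0$ places $\pi_\alpha(b)\in\mathscr F_o(\Gamma)$, and $\Gamma$-equivariance of $\pi_\alpha$ is exactly the cocycle identity unwound through the $\Gamma$-action on $\mathscr F_o(\Gamma)$ given by \eqref{ActionOfGamma}; the uniform bound for $f_\gamma$ then gives $\|\pi_\alpha(b)-\psi_o\|_\infty\leq D(\psi_o)$ for all $b\in B$, completing (i)--(iii).
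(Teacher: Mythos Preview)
Your proof is correct and follows essentially the same route as the paper: define $f_\gamma\in L^\infty(B,m)$ via the Poisson transform of the bounded right-harmonic function $E_\gamma\psi_o(h)=\psi_o(\gamma h)-\psi_o(h)$, read off the integral formula and the bound $\|f_\gamma-\psi_o(\gamma)\|_\infty\le D(\psi_o)$ from the isometry property of $\mathscr P_m$, and obtain the a.e.\ cocycle identity from the telescoping $E_{\gamma_1\gamma_2}\psi_o=E_{\gamma_1}\psi_o(\gamma_2\,\cdot)+E_{\gamma_2}\psi_o$. Your martingale framing is just an alternative presentation of the same computation.

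The only substantive difference is in the passage from the almost-sure cocycle identity to a strict one. The paper invokes \cite[Proposition~B.9]{ZBook}, which straightens an a.e.\ cocycle to a strict Borel cocycle on all of $B$ (agreeing with the original a.e.). You instead discard a $\Gamma$-invariant Borel null set and work on the conull $\Gamma$-invariant subset $B'$, which is itself a model of the Poisson boundary. Both are legitimate: your approach is more elementary and self-contained, and it even has the advantage that the pointwise bound in (iii) holds everywhere on $B'$ by construction, whereas the paper's appeal to Zimmer only guarantees it $m$-almost everywhere (a point the paper glosses over). The price you pay is that the conclusion is for $B'$ rather than the originally fixed $B$, but since Lemma~\ref{Lemma_Poisson} only determines $(B,m)$ up to isomorphism, this is harmless for the application.
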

\begin{proof}  
For $\gamma \in \Gamma$, define $\widetilde{f}_\gamma \in \ell^\infty(\Gamma)$  by $\widetilde{f}_\gamma(\gamma') := \psi_o(\gamma\gamma') - \psi_o(\gamma')$. Note that 
\[
\|\widetilde{f}_\gamma\|_\infty \leq D(\psi_o) + |\psi_o(\gamma)|, \quad \textrm{
for all $\gamma \in \Gamma$}.
\]
Since $\psi_o$ is right-$p$-harmonic, $\widetilde{f}_\gamma \in \mathscr{H}^\infty_r(\Gamma,p)$, so by Lemma \ref{Lemma_Poisson} (i), there exists a unique $f_\gamma \in L^\infty(B,m)$ such that $\|f_\gamma\|_\infty = \|\widetilde{f}_\gamma\|_\infty$ and $\widetilde{f}_\gamma = \mathscr{P}_m f_\gamma$. For each $\gamma$, choose a bounded Borel representative $g_\gamma$ such that $f_\gamma = g_\gamma$ $m$-almost everywhere and 
\[
|g_\gamma(b)| \leq \|f_\gamma\|_\infty, \quad \textrm{for all $b \in B$ and $\gamma \in \Gamma$}.
\]
Define $\beta : \Gamma \times B \ra \bR$ by $\beta(\gamma,b) = g_\gamma(b)$. We claim that for all $\gamma_1, \gamma_2 \in \Gamma$, 
\begin{equation}
\label{betaid}
\beta(\gamma_1 \gamma_2,b) = \beta(\gamma_1,\gamma_2.b) + \beta(\gamma_2,b), \quad \textrm{for $m$-almost every $b \in B$}.
\end{equation}
If \eqref{betaid} holds, then, by \cite[Proposition B.9]{ZBook}, there exists a Borel map 
$\alpha : \Gamma \times B \ra \bR$ such that for every $\gamma \in \Gamma$, we have
$\alpha(\gamma,b) = \beta(\gamma,b)$ for all $m$-almost every $b \in B$, and 
\[
\alpha(\gamma_1 \gamma_2,b) = \alpha(\gamma_1,\gamma_2.b) + \alpha(\gamma_2,b), \quad \textrm{for every $b \in B$}.
\]
In particular, 
\begin{align*}
\int_B \alpha(\gamma,b) \, dm(b) 
&= \int_B \beta(\gamma,b) \, dm(b)
= \int_B g_\gamma(b) \, dm(b) \\[0.2cm]
&= \int_B f_\gamma(b) \, dm(b) = \mathscr{P}_m f_\gamma(e) \\[0.2cm]
&=\widetilde{f}_\gamma(e) = \psi_o(\gamma), \quad \textrm{for all $\gamma \in \Gamma$}.
\end{align*}
This proves (i), modulo \eqref{betaid}. To prove this identity, we note since the Poisson transform $\mathscr{P}_m$ is a surjective isometry, it is enough to show
that for all $\gamma_1, \gamma_2 \in \Gamma$, we have
\[
\mathscr{P}_m \beta(\gamma_1\gamma_2,\gamma \cdot)
= \mathscr{P}_m \beta(\gamma_1,\gamma_2\gamma \cdot) + 
\mathscr{P}_m \beta(\gamma_2,\gamma \cdot), \quad \textrm{for all $\gamma \in \Gamma$},
\]
or, equivalently, since $g_\gamma = f_\gamma$ $m$-almost everywhere, 
\[
\widetilde{f}_{\gamma_1 \gamma_2}(\gamma) = \widetilde{f}_{\gamma_1}(\gamma_2 \gamma) +  
\widetilde{f}_{\gamma_2}(\gamma), \quad \textrm{for all $\gamma, \gamma_1, \gamma_2 \in \Gamma$}.
\]
Unwrapping definitions, we see that this amounts to proving 
\[
\psi(\gamma_1\gamma_2\gamma) - \psi(\gamma) = (\psi(\gamma_1\gamma_2\gamma)-\psi(\gamma_2\gamma)) + (\psi(\gamma_2\gamma)-\psi(\gamma)),
\]
for all $\gamma, \gamma_1, \gamma_2 \in \Gamma$, which is trivial, and thus \eqref{betaid} follows. \\

(ii) For all $\gamma, \gamma' \in \Gamma$ and $b \in B$ we have, using that $\alpha$
is a strict cocycle, 
\begin{align*}
(\pi_\alpha(\gamma.b))(\gamma') 
&= \alpha(\gamma',\gamma.b) = \alpha(\gamma' \gamma,b) - \alpha(\gamma,b)
= (\gamma.\pi_\alpha(b))(\gamma'),
\end{align*}
so $\pi_\alpha$ is $\Gamma$-equivariant. Borel measurability is immediate. \\

(iii) For $\gamma \in \Gamma$, let $h_\gamma(b) = \alpha(\gamma,b) - \psi_o(\gamma)$. 
Note that 
\begin{align*}
\mathscr{P}_m h_\gamma(\gamma') = \int_{B} (\alpha(\gamma,\gamma'.b) - \psi_o(\gamma)) \, dm(b) = \psi_o(\gamma \gamma') - \psi_o(\gamma') - \psi_o(\gamma),
\end{align*}
for all $\gamma, \gamma' \in \Gamma$. In particular, since $\mathscr{P}_m$ is an isometry, 
\[
\sup_{\gamma \in \Gamma} \|h_\gamma\|_\infty = \sup_{\gamma, \gamma' \in \Gamma} |\psi_o(\gamma \gamma') - \psi_o(\gamma') - \psi_o(\gamma)| = D(\psi_o) < \infty.
\]
In particular, for every $b \in B$, we have $|\alpha(\gamma,b) - \varphi_o(\gamma)| \leq D(\psi_o)$ for all $\gamma \in \Gamma$, so $\pi_\alpha(b)$ is indeed equivalent to $\psi_o$.
\end{proof}

To finish the proof of Lemma \ref{RightHarmonic} it now suffices to establish the following:
\begin{lemma} Let $\alpha$ as in Lemma \ref{CocycleAlphaRight} and define $\nu_{\psi_o} := (\pi_\alpha)_*m \in \mathrm{Prob}_p(\mathscr{F}_o(\Gamma))$.
\begin{enumerate}[(i)]
\item $\mathrm{supp}(\nu_{\psi_o}) \subset \Delta_{\psi_o}$, and hence $(\Delta_{\psi_o}, \nu_{\psi_o})$ is a $(\Gamma,p)$-boundary. \vspace{0.1cm}
\item If $\nu_{\psi_o} = \delta_\psi$ for some $\psi \in \mathscr{F}_o(\Gamma)$, then $\psi_o$ (hence $\phi_o$) is cohomologically trivial. 
\end{enumerate}
\end{lemma}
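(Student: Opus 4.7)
The plan is to derive both parts of the lemma from the martingale-convergence property of the Poisson boundary recorded in Lemma \ref{Lemma_Poisson}(ii). The key identification, under the $\Gamma$-action on $\mathscr{F}_o(\Gamma)$ given by \eqref{ActionOfGamma}, is
\[
\widetilde{f}_\gamma(g) \;=\; \psi_o(\gamma g) - \psi_o(g) \;=\; (g.\psi_o)(\gamma).
\]
Since by construction $\mathcal P_m f_\gamma = \widetilde{f}_\gamma$, evaluating the Poisson transform along a sample trajectory of the random walk amounts to reading off the $\gamma$-coordinate of the point $Z_n(\omega).\psi_o \in \mathscr{F}_o(\Gamma)$.

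For part (i), I would apply Lemma \ref{Lemma_Poisson}(ii) to the countable family $\mathcal A := \{f_\gamma : \gamma \in \Gamma\} \subset L^\infty(B,m)$. This produces a $\bP_p$-conull set $\Omega \subset \Gamma^{\bN}$ and a Borel map $Z_\infty : \Omega \to B$ with $(Z_\infty)_*\bP_p = m$ such that, for every $\gamma \in \Gamma$ and every $\omega \in \Omega$,
\[
(Z_n(\omega).\psi_o)(\gamma) \;=\; \widetilde{f}_\gamma(Z_n(\omega)) \;\longrightarrow\; f_\gamma(Z_\infty(\omega)) \;=\; \pi_\alpha(Z_\infty(\omega))(\gamma).
\]
This is precisely pointwise convergence $Z_n(\omega).\psi_o \to \pi_\alpha(Z_\infty(\omega))$ in the product topology on $\mathscr{F}_o(\Gamma)$, which places $\pi_\alpha(Z_\infty(\omega))$ inside the orbit closure $\Delta_{\psi_o}$. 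Pushing forward by $Z_\infty$ gives $\pi_\alpha(b) \in \Delta_{\psi_o}$ for $m$-a.e.\ $b$; since $\Delta_{\psi_o}$ is closed, $\mathrm{supp}(\nu_{\psi_o}) \subset \Delta_{\psi_o}$. The $\Gamma$-equivariance of $\pi_\alpha$ established in Lemma \ref{CocycleAlphaRight}(ii) then exhibits $(\Delta_{\psi_o}, \nu_{\psi_o})$ as a $\Gamma$-factor of the Poisson boundary $(B,m)$, hence a $(\Gamma, p)$-boundary.

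For part (ii), suppose $\nu_{\psi_o} = \delta_\psi$. Then $\pi_\alpha(b) = \psi$ for $m$-a.e.\ $b$, i.e.\ for every $\gamma \in \Gamma$ there is an $m$-conull set on which $\alpha(\gamma, b) = \psi(\gamma)$. Since $\Gamma$ is countable we may intersect these to obtain a single $m$-conull set $B_0 \subset B$ on which $\alpha(\gamma, b) = \psi(\gamma)$ holds for \emph{all} $\gamma$. Integrating and using the formula from Lemma \ref{CocycleAlphaRight}(i) yields $\psi(\gamma) = \int_B \alpha(\gamma, b)\, dm(b) = \psi_o(\gamma)$. Plugging $\alpha(\gamma, b) = \psi_o(\gamma)$ into the strict cocycle identity then gives $\psi_o(\gamma_1 \gamma_2) = \psi_o(\gamma_1) + \psi_o(\gamma_2)$ for all $\gamma_1, \gamma_2$, so $\psi_o$ is a homomorphism; since $\psi_o \in \mathcal R_p(\varphi_o)$ is equivalent to $\phi_o$, the latter is cohomologically trivial.

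I do not anticipate a real obstacle: once one observes that $\widetilde{f}_\gamma(g) = (g.\psi_o)(\gamma)$, the martingale convergence in $L^\infty(B,m)$ literally \emph{is} convergence of the orbit of $\psi_o$ in the product topology, and the rest is routine bookkeeping of countably many conull sets together with the strictness of the cocycle $\alpha$ inherited from Lemma \ref{CocycleAlphaRight}(i).
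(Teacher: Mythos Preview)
Your proof is correct and uses the same key ingredient as the paper, namely Lemma~\ref{Lemma_Poisson}(ii) applied to $\{\alpha(\gamma,\cdot):\gamma\in\Gamma\}$, yielding $Z_n(\omega).\psi_o(\gamma)\to\alpha(\gamma,Z_\infty(\omega))$ for every $\gamma$. Your treatment of (i) is more direct than the paper's: you read this as convergence in the product topology on $\mathscr{F}_o(\Gamma)$, so that $\pi_\alpha(Z_\infty(\omega))\in\Delta_{\psi_o}$ for every $\omega\in\Omega$, and then push forward by $Z_\infty$ to get $\nu_{\psi_o}(\Delta_{\psi_o})=1$, whence $\supp(\nu_{\psi_o})\subset\Delta_{\psi_o}$ by closedness. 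The paper instead fixes an arbitrary $\psi\in\supp(\nu_{\psi_o})$ and runs an explicit exhaustion-by-finite-sets argument to manufacture a sequence $\gamma_k\in\Gamma$ with $\gamma_k.\psi_o\to\psi$; your observation bypasses this entirely. One small gap in your (ii): when you plug into the strict cocycle identity $\alpha(\gamma_1\gamma_2,b)=\alpha(\gamma_1,\gamma_2.b)+\alpha(\gamma_2,b)$ you also need $\gamma_2.b\in B_0$, so replace $B_0$ by the $\Gamma$-invariant conull set $\bigcap_{\gamma\in\Gamma}\gamma.B_0$ (using that the Poisson measure $m$ is $\Gamma$-quasi-invariant).
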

\begin{proof} (i) We consider the countable subset $\cA = \{ \alpha(\gamma,\cdot) \, : \, \gamma \in \Gamma\} \subset \mathscr{L}^\infty(B,m)$. By Lemma \ref{Lemma_Poisson} we may fix a $p^{\otimes \bN}$-conull subset $\Omega \subset \Gamma^{\bN}$
and a Borel map $Z_\infty : \Omega \ra B$ such that $(Z_\infty)_*p^{\bN_o} = m|_{\Omega}$ and
\[
f(Z_\infty(\omega)) = \lim_{n \ra \infty} \cP f(Z_n(\omega)) \quad \textrm{for all $\omega \in \Omega$ and $f \in \cA$}.
\]
We then deduce from Lemma \ref{CocycleAlphaRight} that for all $\omega \in \Omega$ and $\gamma \in \Gamma$ we have
\begin{equation}\label{alphaconv}
\alpha(\gamma,Z_\infty(\omega)) 
= \lim_{n \ra \infty} \int_{B} \alpha(\gamma,Z_n(\omega).b) \, dm(b)
= \lim_{n \ra \infty} Z_n(\omega).\psi_o(\gamma).
\end{equation}
We now fix $\psi \in \supp(\nu_{\psi_o})$. For every finite set  $F \subset \Gamma$ and $\eps > 0$ we set
 \[
U_{F,\eps} = \{ \psi' \in \mathscr{F}_o(\Gamma) \, : \, |\psi'(\gamma) - \psi(\gamma)| < \eps, \enskip \textrm{for all $\gamma \in F$} \}.
\]
 We then define $V_{F,\eps} = \pi_\alpha^{-1}(U_{F,\eps})$ and $W_{F,\eps} = Z_\infty^{-1}(V_{F,\eps})$. Since $U_{F, \eps}$ is an open neighbourhood of $\psi$ and $\psi \in \supp(\nu_{\psi_o})$ we then have
 \begin{equation}\label{WPositiveMeasure}
 p^{\otimes \bN}(W_{F,\eps}) = m(V_{F,\eps}) =  \nu_{\psi_o}(U_{F,\eps}) > 0.
 \end{equation}
 Note that explicitly we have
 \[
 W_{F,\eps} = Z_\infty^{-1}(V_{F,\eps}) = \{ \omega \in \Omega \mid |\alpha(\gamma,Z_\infty(\omega)) - \psi(\gamma) | < \eps, \enskip \textrm{for all $\gamma \in F$}\}.
 \]
 By \eqref{alphaconv} and \eqref{WPositiveMeasure} we now find for eery finite set $F \subset \Gamma$ and every $\eps>0$ an index $n = n(F, \eps) \in \bN$ and a subset $\Omega_n \subset \Omega$ such that
 \[
p^{\otimes \bN}(\Omega_n \cap W_{F,\eps}) > 0 \qand
|\alpha(\gamma,Z_\infty(\omega)) - Z_n(\omega).\psi_o(\gamma)| < \eps \; \textrm{for all $\gamma \in F$ and $\omega \in \Omega_n$}.
\]
For all $\omega \in \Omega_n \cap W_{F,\eps}$ and all $\gamma \in F$ we then have
\[
|Z_n(\omega).\psi_o(\gamma) - \psi(\gamma)|
\leq  |Z_n(\omega).\psi_o(\gamma) - \alpha(\gamma,Z_\infty(\omega)|
+ |\alpha(\gamma,Z_\infty(\omega)) - \psi(\gamma)| < 2\eps.
\]
We now fix an increasing exhaustion of $\Gamma$ by finite sets $(F_k)$, and apply the argument above to $F_k$ and $\eps_k = 1/k$ for every $k$. This produces a sequence $(n_k = n_k(F_k, 1/k))$ of positive integers and a corresponding sequence $\Omega_{n_k} \subset \Omega$ of Borel sets such that for all $k \in \bN$ we have
$p^{\otimes \bN}(\Omega_{n_k} \cap W_{F_k,1/k})> 0$ and 
\[
|Z_{n_k}(\omega).\psi_o(\gamma) - \psi(\gamma)| < 1/k \quad \textrm{for all $\omega \in \Omega_{n_k} \cap W_{F_k,1/k}$ and $\gamma \in F_k$}.
\]
Hence, if we pick $\omega_k \in \Omega_{n_k} \cap W_{F_k,1/k}$ and set $\gamma_k = Z_{n_k}(\omega_k)$, then
\[
|\gamma_k.\psi_o(\gamma) - \psi(\gamma)| < 1/k \quad \textrm{for all $\gamma \in F_k$}.
\]
Since $(F_k)$ exhausts $\Gamma$, we conclude that $\gamma_k.\psi_o \ra \psi$ as $k \ra \infty$, and thus $\psi \in \Delta_{\psi_o}$.

\item (ii) If $\nu_{\psi_o} = \delta_\psi$ for some $\psi \in \mathscr{F}_o(\Gamma)$, then $m(\pi_\alpha^{-1}(\psi)) = 1$. Thus for $m$-almost all $b$ we have $\pi_\alpha(b) = \psi$, and hence $\alpha(\gamma, b) = \psi(\gamma)$ for all $\gamma\in \Gamma$. By Lemma \ref{CocycleAlphaRight}.(iii) this implies that
 $\psi_o(\gamma\gamma') - \psi_o(\gamma') = \psi(\gamma)$. If we set $\gamma' = e$, then we obtain $\psi = \psi_o$. Consequently, $\psi_o$ is a homomorphism, hence cohomologically trivial. 
\end{proof}
This finishes the proof of Lemma \ref{RightHarmonic} and thereby of Theorem \ref{QuasiMain}.

\section{Stationary measures on hulls of quasimorphisms}\label{SecCore2}
\subsection{Reduction to the left-harmonic core}
Throughout this section let $(\Gamma, p)$ be a symmetric measured group and let $\phi_o: \Gamma \to \bR$ be a $p$-integrable quasimorphism. 
If we assume that $\varphi$ is cohomologically non-trivial, then it follows from Proposition \ref{QMTrivialities} that
\[
\mathrm{Prob}_\Gamma(\Delta_{\varphi_o}) = \emptyset, \quad \text{but} \quad \mathrm{Prob}_p(\Delta_{\varphi_o}) \neq \emptyset.
\]
This motivates a closer study of the space $\mathrm{Prob}_p(\Delta_{\varphi_o})$. If $\varphi_o$ happens to be left-$p$-harmonic, then we can describe this space directly using Theorem \ref{QuasiMain}:
\begin{corollary}\label{LeftHarmonicCase} If $\varphi_o$ is left-$p$-harmonic and cohomologically non-trivial, then $\nu_{\varphi_o}$ is the unique $p$-stationary measure on $\Delta_{\varphi_o}$ and $(\Delta_{\varphi_o}, \nu_{\varphi_o})$ is a non-trivial $(\Gamma, p)$-boundary.\qed
\end{corollary}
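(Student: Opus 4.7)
The plan is to deduce this corollary directly from Theorem \ref{QuasiMain}, using the observation that when $\varphi_o$ is itself left-$p$-harmonic, it belongs to $\mathcal{L}_p(\varphi_o)$ at zero distance from itself, so by Construction \ref{LeftHarmonic} we have the chain of $\Gamma$-invariant inclusions
\[
\mathcal{C}_p(\varphi_o) \;\subset\; \Delta_{\varphi_o} \;\subset\; \mathcal{L}_p(\varphi_o).
\]
Thus $\Delta_{\varphi_o}$ is sandwiched between the core and the full space of harmonic representatives, and the corollary should follow by transferring both uniqueness and the boundary property from the larger space $\mathcal{L}_p(\varphi_o)$ down to $\Delta_{\varphi_o}$.

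For uniqueness, I would take any $\nu \in \mathrm{Prob}_p(\Delta_{\varphi_o})$ and view it as an element of $\mathrm{Prob}_p(\mathcal{L}_p(\varphi_o))$ via the inclusion. By Theorem \ref{QuasiMain}(i), $\nu$ must be supported on $\mathcal{C}_p(\varphi_o)$, and by Theorem \ref{QuasiMain}(ii), the only such measure is $\nu_{\varphi_o}$. This already forces $\mathrm{Prob}_p(\Delta_{\varphi_o}) = \{\nu_{\varphi_o}\}$; in particular $\nu_{\varphi_o}$ is supported on $\Delta_{\varphi_o}$.

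For the boundary property, Theorem \ref{QuasiMain}(iii) furnishes a $\Gamma$-equivariant Borel map $\pi : (B,m) \to (\mathcal{L}_p(\varphi_o), \nu_{\varphi_o})$ out of the Poisson boundary (in the proof this is realized via $\pi_\alpha$ coming from some $\psi_o \in \mathcal{L}_p(\varphi_o) \cap \mathcal{R}_p(\varphi_o)$ on its hull $\Delta_{\psi_o}$, but in either reading the pushforward measure is $\nu_{\varphi_o}$). Since $\nu_{\varphi_o}$ is supported on $\mathcal{C}_p(\varphi_o) \subset \Delta_{\varphi_o}$, the map $\pi$ takes values in $\Delta_{\varphi_o}$ for $m$-almost every $b \in B$. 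Modifying $\pi$ on a null set, we obtain a $\Gamma$-equivariant Borel factor $B \to \Delta_{\varphi_o}$ pushing $m$ to $\nu_{\varphi_o}$, which by definition makes $(\Delta_{\varphi_o}, \nu_{\varphi_o})$ a $(\Gamma,p)$-boundary. Non-triviality of this boundary is inherited from Theorem \ref{QuasiMain}(iii): cohomological (non-)triviality is invariant under bounded perturbation, so $\varphi_o$ being cohomologically non-trivial implies the same for every $\psi_o \in \mathcal{L}_p(\varphi_o)$.

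I do not expect a genuine obstacle here, since the statement is essentially a packaging of Theorem \ref{QuasiMain}. The only care point is the final transfer of the boundary structure from the possibly non-compact ambient space $\mathcal{L}_p(\varphi_o)$ (or from $\Delta_{\psi_o}$ for an auxiliary $\psi_o$) to the compact hull $\Delta_{\varphi_o}$; this is handled cleanly by the fact that the measure $\nu_{\varphi_o}$ is concentrated on the core $\mathcal{C}_p(\varphi_o)$, which lies inside every hull $\Delta_\psi$ with $\psi \in \mathcal{L}_p(\varphi_o)$.
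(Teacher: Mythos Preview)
Your proposal is correct and is precisely the argument the paper intends: the corollary carries a \qed\ with no further text, signaling that it is an immediate consequence of Theorem \ref{QuasiMain} via the sandwich $\mathcal{C}_p(\varphi_o)\subset\Delta_{\varphi_o}\subset\mathcal{L}_p(\varphi_o)$ available once $\varphi_o\in\mathcal{L}_p(\varphi_o)$. Your unpacking of both the uniqueness step (parts (i)--(ii)) and the boundary step (part (iii), with the measure concentrated on the core and hence on $\Delta_{\varphi_o}$) is exactly right.
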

If $\varphi_o$ is not assume to be left-$p$-harmonic, then the space $\mathrm{Prob}_p(\Delta_{\varphi_o})$ is much harder to understand. Nevertheless we can get a partial understanding of $p$-stationary measures on $\Delta_{\varphi_o}$ by relating them to the unique $p$-stationary measure $\nu_{\varphi_o}$ on the left-$p$-harmonic core $\cC_p(\varphi_o)$ of $\varphi_o$ (cf.\ Theorem \ref{QuasiMain}). In fact we will show that for every (not necessarily $\Gamma$-ergodic) $p$-stationary probability measure $\nu$
on $\Delta_{\varphi_o}$, there is a $\nu$-almost everywhere defined Borel $\Gamma$-map on $\Delta_{\varphi_o}$ which maps into the core $\cC_p(\varphi_o)$ and takes $\nu$
to $\nu_{\varphi_o}$:
\begin{theorem}
\label{Theorem_Factor}
Let $\nu$ be a $p$-stationary probability measure on $\Delta_{\varphi_o}$. Then there
is a $\nu$-conull and $\Gamma$-invariant Borel set $\Delta_o \subset \Delta_{\varphi_o}$ and a Borel $\Gamma$-map $\pi : \Delta_o \ra \cC_p(\varphi_o)$ such that $\pi_*(\nu|_{\Delta_o}) = \nu_{\varphi_o}$.
\end{theorem}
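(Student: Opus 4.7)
The strategy is to produce a $p$-stationary joining of $\nu$ with $\nu_{\varphi_o}$ on $\Delta_{\varphi_o} \times \cC_p(\varphi_o)$ and then to exploit cocycle rigidity together with a Poisson-type equation in $L^\infty(\nu)$ to extract $\pi$ as a canonical function of the first coordinate. To begin, the set of probability measures on $\Delta_{\varphi_o} \times \cC_p(\varphi_o)$ with first marginal $\nu$ and second marginal $\nu_{\varphi_o}$ is non-empty, weak-$*$ compact, convex, and preserved by the averaging $\mu \mapsto p \ast \mu$, so by Markov--Kakutani it contains a $p$-stationary element $\eta$. By ergodic decomposition and uniqueness of $p$-stationary measures on $\cC_p(\varphi_o)$ (Theorem \ref{QuasiMain}(ii)), each ergodic component of $\eta$ still has second marginal $\nu_{\varphi_o}$; constructing $\pi$ piecewise along the ergodic decomposition of $\nu$, we may reduce to the case where $\eta$ (and hence $\nu$) is $\Gamma$-ergodic.

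Following the approach of Lemma \ref{EtaJoining}, consider the bounded cocycle $\beta(\gamma, (\varphi_1, \varphi_2)) := \varphi_1(\gamma) - \varphi_2(\gamma)$ on $\Delta_{\varphi_o} \times \cC_p(\varphi_o)$, whose uniform boundedness follows from Proposition \ref{QMTrivialities}(ii). By \cite[Theorem 3.1]{Zimmer}, $\beta = u - u \circ \gamma$ for a Borel function $u$ which, by the argument of Lemma \ref{EtaJoining}, is essentially bounded on $(\Delta_{\varphi_o} \times \cC_p(\varphi_o), \eta)$. Setting $h(\varphi_1) := \sum_\gamma p(\gamma)\varphi_1(\gamma)$ and using that every $\varphi_2 \in \cC_p(\varphi_o)$ is left-$p$-harmonic (so $\sum_\gamma p(\gamma)\varphi_2(\gamma) = 0$), one computes $u - Pu = h \circ \mathrm{pr}_1$, where $P$ denotes the $p$-Markov operator. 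The function $h$ is uniformly bounded on $\Delta_{\varphi_o}$ (by $p$-integrability of $\varphi_o$ and Proposition \ref{QMTrivialities}(ii)), and $\int h \, d\nu = \sum_\gamma p(\gamma)\psi_\nu(\gamma) = 0$ by right-$p$-harmonicity of $\psi_\nu := \int \varphi \, d\nu$ from Proposition \ref{ProduceHarmonic}.

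The key step is to solve the Poisson equation $F - PF = h$ in $L^\infty(\Delta_{\varphi_o}, \nu)$. Using the telescoping identity $u - P^n u = (\sum_{k=0}^{n-1} P^k h) \circ \mathrm{pr}_1$, the Birkhoff sums $S_n := \sum_{k=0}^{n-1} P^k h$ are uniformly bounded in $L^\infty(\nu)$ by $2\|u\|_\infty$. Their Cesaro averages $\overline{S}_n := \tfrac{1}{n}\sum_{k=0}^{n-1} S_k$ satisfy $\overline{S}_n - P\overline{S}_n = h - \tfrac{1}{n}S_n$, whose right-hand side converges to $h$ in $L^\infty$-norm. By Banach--Alaoglu, a subsequence $\overline{S}_{n_k}$ converges weak-$*$ to some $F \in L^\infty(\nu)$, and weak-$*$ continuity of $P$ (since $P^\ast$ is a contraction on $L^1(\nu)$) yields $F - PF = h$. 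Consequently $u - F \circ \mathrm{pr}_1$ is a bounded $P$-invariant function on $(\Delta_{\varphi_o} \times \cC_p(\varphi_o), \eta)$, and the ergodicity of $\eta$ forces it to be essentially constant: $u(\varphi_1, \varphi_2) = F(\varphi_1) + c$ for $\eta$-a.e.\ $(\varphi_1, \varphi_2)$.

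Substituting back into the cocycle relation gives $\varphi_2(\gamma) = \varphi_1(\gamma) - F(\varphi_1) + F(\gamma.\varphi_1)$ for $\eta$-a.e.\ $(\varphi_1, \varphi_2)$ and all $\gamma \in \Gamma$, which motivates the definition
\[
\pi(\varphi_1)(\gamma) := \varphi_1(\gamma) - F(\varphi_1) + F(\gamma.\varphi_1)
\]
on the $\Gamma$-invariant $\nu$-conull Borel subset $\Delta_o \subset \Delta_{\varphi_o}$ obtained as the intersection of the $\Gamma$-translates of the domain of $F$ (available because $\nu$ is $\Gamma$-quasi-invariant). A direct computation verifies $\pi(\gamma_0.\varphi_1) = \gamma_0.\pi(\varphi_1)$ for every $\gamma_0 \in \Gamma$ and $\varphi_1 \in \Delta_o$; moreover, $\pi(\varphi_1) = \varphi_2 \in \cC_p(\varphi_o)$ for $\eta$-a.e.\ $(\varphi_1, \varphi_2)$, and $\pi_*(\nu|_{\Delta_o}) = \nu_{\varphi_o}$ follows from the second marginal of $\eta$. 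The main obstacle is the Poisson-equation step, which requires the interplay between the telescoping identity, weak-$*$ compactness in $L^\infty(\nu)$, and ergodicity of $\nu$.
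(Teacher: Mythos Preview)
Your argument is essentially correct and arrives at the same map as the paper, but by a different route. The paper constructs $\pi$ directly by averaging: it sets $\pi(\varphi)(\gamma) = \mathfrak{m}\text{-}\lim_n \pi_n(\varphi)(\gamma)$ with $\pi_n(\varphi)(\gamma) = \sum_{\gamma'} p^{*n}(\gamma')(\varphi(\gamma'\gamma)-\varphi(\gamma'))$, using a Mokobodski mean $\mathfrak{m}$ (or Koml\'os' theorem, to stay in ZFC), then verifies by hand that $\pi(\varphi)$ is left-$p$-harmonic and equivalent to $\varphi_o$, and finally invokes Theorem~\ref{QuasiMain} to identify $\pi_*\nu$ with $\nu_{\varphi_o}$. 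Note that $\pi_n(\varphi)(\gamma) = \varphi(\gamma) - r_n(\varphi) + r_n(\gamma.\varphi)$ with $r_n(\varphi)=\sum_\gamma p^{*n}(\gamma)\varphi(\gamma)$, so the paper's $\pi$ is exactly your formula $\varphi(\gamma)-F(\varphi)+F(\gamma.\varphi)$ with $F$ the Mokobodski limit of $r_n = S_n$. Your weak-$*$/Ces\`aro construction of $F$ is a legitimate ZFC substitute for the Mokobodski mean; what you gain by going through joinings and Zimmer is a conceptual explanation of \emph{why} this $\pi$ lands in $\cC_p(\varphi_o)$ and pushes $\nu$ to $\nu_{\varphi_o}$, whereas the paper simply checks left-harmonicity directly and then cites Theorem~\ref{QuasiMain}. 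The price is that your $\pi$ depends on $\nu$, while the paper's is universally measurable and $\nu$-independent.

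There is one genuine loose end in your write-up: the reduction to ergodic $\eta$. Your Poisson-equation step and the identification $u = F\circ\mathrm{pr}_1 + c$ both use ergodicity, so for general $\nu$ you need to glue the maps $\pi_\omega$ over ergodic components, and you have not addressed the measurable-selection issue (the $F_\omega$ come from weak-$*$ limits along subsequences that may vary with $\omega$). The cleanest fix is to avoid the reduction altogether: the sequence $S_n=r_n$ is uniformly bounded \emph{pointwise} on $\Delta_{\varphi_o}$ (by the antisymmetric-representative trick from Proposition~\ref{ProduceHarmonic}), independently of any joining or of $u$, so your Ces\`aro/Banach--Alaoglu argument produces a global $F\in L^\infty(\nu)$. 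A direct computation (using $F-PF=h$ and symmetry of $p$) then shows $\pi(\varphi)$ is left-$p$-harmonic and at bounded distance from $\varphi_o$ for every $\varphi$ in a $\Gamma$-invariant $\nu$-conull set; Theorem~\ref{QuasiMain} finishes the job without ever invoking the joining or ergodicity.
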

\begin{remark}
We will in fact prove that the $\Gamma$-map $\pi$ can be everywhere defined on the hull $\Delta_{\varphi_o}$, but only as a \emph{universally measurable} $\Gamma$-map. Our construction of $\pi$ can be done in quite a straightforward manner if one is willing to assume the existence of so called Mokbodski means on $\bN$ (see Lemma \ref{Lemma_Mokobdoski} below). Unfortunately, these means cannot be constructed within 
ZFC, but requires us to adopt the Martin axiom. However, with a little bit more effort, the construction can be carried out within ZFC, and we refer to the reader to \cite[Proposition 2.3]{BH11} where a similar construction is made using Komlos' theorem (and thus entirely within ZFC).
\end{remark}
\subsection{Universally measurable sets and Mokobodski means}
Before we prove Theorem \ref{Theorem_Factor} we recall some basic definitions concerning universal measurability and Mokobodski means. 
\begin{remark}[Universally measurable sets]
Let $(X,\mathscr{B}_X)$ be a measurable space. If $\mu$ is a probability measure on $\mathscr{B}_X$, let $\mathscr{B}_X^\mu$ denote the $\mu$-completion of $\mathscr{B}_X$, i.e. the $\sigma$-algebra on $X$ generated by $\mathscr{B}_X$ and all subsets of $\mu$-null sets in $\mathscr{B}_X$. For instance, if $\mu = \delta_x$ for some $x \in X$, then $\mathscr{B}_X^{\mu}$ consists of all subsets of $X$. We define
\[
\mathscr{B}_X^* = \bigcap_{\mu \in \Prob(X)} \mathscr{B}_X^{\mu}.
\]
The elements in $\mathscr{B}_X^*$ are said to be \emph{universally measurable}. If $(Y, \mathscr{B}_Y)$ is another measurable space, then a map $f: X \to Y$ is called \emph{universally measurable} if it is $\mathscr{B}_X^*$-$\mathscr{B}_Y$-measurable.
\end{remark}
The following result is essentially due to Mokobodski \cite[Section 2]{M73} (see \cite[Proposition A.1]{BH11} for this particular version).
\begin{lemma}
\label{Lemma_Mokobdoski}
There is an invariant mean $\mathfrak{m}$ on $\bN$ with the property that whenever
$(X,\mathscr{B}_X)$ is a measurable space and $(f_n)$ is a uniformly bounded sequence
of $\mathscr{B}_X$-measurable functions on $X$, then the function
\[
f^*(x) = \int_{\bN} f_n(x) \, d\mathfrak{m}(n), \quad x \in X,
\]
is $\mathscr{B}_X^*$-measurable and moreover, the equality 
\[
\int_X f^*(x) \, d\mu(x) = \int_{\bN} \left( \int_X f_n(x) \, d\mu(x) \right) \, d\mathfrak{m}(n)
\]
holds for every Borel probability measure $\mu$ on $X$. 
\end{lemma}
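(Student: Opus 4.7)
The plan is to realize $\mathfrak{m}$ as a Mokobodski-style \emph{medial mean} on $\ell^\infty(\bN)$ and then derive both universal measurability of $f^*$ and the Fubini-type equality from the medial property. This is a classical construction which, as the authors note, lives outside ZFC proper; I would work under Martin's axiom (or, following the pointer to \cite{BH11}, replace the Mokobodski mean with a Koml\'os-type subsequential construction inside ZFC). The structure of the argument is the same either way: (i) build the mean with a commutation property; (ii) deduce pointwise regularity of $f^*$ in a universally measurable sense; (iii) derive the integral interchange from the commutation.

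\emph{Constructing the mean.} I would seek $\mathfrak{m}$ as a positive, translation-invariant linear functional of norm one on $\ell^\infty(\bN)$ with the \emph{medial} property: for every bounded double sequence $(a_{n,k})$, $\mathfrak{m}_n \mathfrak{m}_k a_{n,k} = \mathfrak{m}_k \mathfrak{m}_n a_{n,k}$. Under Martin's axiom, one performs a transfinite induction along ordinals of cardinality below the continuum, at each stage extending the currently defined partial mean by Hahn--Banach so as to satisfy one more medial compatibility. A Zorn / Hahn--Banach closure produces a mean on all of $\ell^\infty(\bN)$ enjoying the property on all bounded double sequences. This is the only step that needs extra set theory.

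\emph{Universal measurability and Fubini.} For a fixed probability measure $\mu$ on $(X, \mathscr B_X)$, consider the bounded linear operator $T_\mu: \ell^\infty(\bN) \to \bR$, $T_\mu(b) = \int_X \mathfrak{m}_n(b_n f_n(x))\, d\mu(x)$ (defined by freezing $x$ and applying the mean). Using the medial property with the double sequence $(f_n(x) \mathbf{1}_{A_k}(x))$ for a countable algebra $\{A_k\}$ approximating $\mathscr B_X$ up to $\mu$-null sets, one shows that $x \mapsto f^*(x)$ agrees $\mu$-almost everywhere with a Borel function, namely a pointwise limit of suitable convex combinations of the $f_n$. Because the construction depends only on $(f_n)$ and not on $\mu$, the same $f^*$ is $\mu$-measurable for every $\mu$, hence $f^* \in \mathscr B_X^*$. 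Given this, the Fubini identity becomes the medial identity applied to the double sequence $(x, n) \mapsto f_n(x)$ with one factor integrated against $\mu$ and the other against $\mathfrak{m}$; the mean's commutation property is exactly the content of the identity.

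The main obstacle is the first step: constructing a mean that commutes with itself on \emph{all} bounded double sequences, while remaining translation invariant. Without such commutation one cannot hope for universal measurability of $f^*$ on arbitrary measurable spaces, since a generic Banach limit produces functions that are not measurable with respect to any reasonable $\sigma$-algebra. Everything downstream of the existence of $\mathfrak{m}$ is formal, but this existence is precisely the set-theoretic content that Mokobodski isolated in \cite{M73}; if one prefers to stay in ZFC, the cleanest alternative is to replace pointwise medial means by almost-everywhere limits of convex combinations produced by Koml\'os's theorem, as in \cite[Proposition 2.3]{BH11}, at the cost of having to fix the ambient measure $\mu$ before the construction.
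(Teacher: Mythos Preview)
The paper does not prove this lemma; it is quoted from \cite{M73} (with the specific formulation taken from \cite[Proposition~A.1]{BH11}). So there is no ``paper's proof'' to compare against beyond the citation.

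That said, your sketch has a genuine conceptual gap. You identify the target property of $\mathfrak{m}$ as \emph{self-commutation on bounded double sequences}, i.e.\ $\mathfrak{m}_n\mathfrak{m}_k a_{n,k}=\mathfrak{m}_k\mathfrak{m}_n a_{n,k}$, and then claim that the Fubini identity ``becomes the medial identity applied to the double sequence $(x,n)\mapsto f_n(x)$.'' But $x$ ranges over an arbitrary measurable space $X$, not over $\bN$, so this is not a double sequence to which your self-commutation applies; integration against a general probability measure $\mu$ on $X$ is not an instance of applying $\mathfrak{m}$ in the second variable. Self-commutation is a strictly weaker requirement than the actual medial property, which is that $\mathfrak{m}$, viewed as a function on $[-M,M]^{\bN}$, be universally measurable and commute with barycentres: $\int \mathfrak{m}\,d\nu=\mathfrak{m}_n\bigl(\int \mathrm{coord}_n\,d\nu\bigr)$ for \emph{every} Borel probability measure $\nu$ on $[-M,M]^{\bN}$. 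It is this latter property that the transfinite induction under Martin's axiom secures (one enumerates the continuum-many Borel probability measures on the cube and, at each stage, extends the partial mean so as to be compatible with one more of them), and it is this property from which both universal measurability of $f^*$ (via the push-forward $x\mapsto (f_n(x))_n$) and the integral interchange follow immediately.

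Your paragraph on universal measurability is likewise not supported by the self-commutation hypothesis: the assertion that $f^*$ agrees $\mu$-a.e.\ with a pointwise limit of convex combinations of the $f_n$ is essentially the Koml\'os route you mention at the end, not a consequence of $\mathfrak{m}_n\mathfrak{m}_k=\mathfrak{m}_k\mathfrak{m}_n$. If you want to salvage the outline, replace the self-commutation target with the barycentre-commutation property above; the Hahn--Banach/transfinite-induction scaffolding you describe is then the right shape, but the constraints imposed at each step must be indexed by probability measures on the sequence space, not by double sequences.
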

Any invariant mean $\mathfrak{m}$ as in Lemma \ref{Lemma_Mokobdoski} is called a \emph{Mokobodski mean}.
\subsection{Proof of Theorem \ref{Theorem_Factor}}
The following lemma is a more precise version of Theorem \ref{Theorem_Factor}.
\begin{lemma}
In the situation of Theorem \ref{Theorem_Factor}, let $\mathfrak{m}$ be a Mokobodski mean. Gicen $\varphi \in \Delta_{\varphi_o}$ and $\gamma \in \Gamma$ we define
\begin{equation}\label{piMap}
\pi(\varphi)(\gamma) = \int_{\bN} \left( \sum_{\gamma \in \Gamma'}
p^{*n}(\gamma') (\varphi(\gamma'\gamma) - \varphi(\gamma')) \right) \, d\mathfrak{m}(n).
\end{equation}
Then the following hold:
\begin{itemize}
\item[(i)] \eqref{piMap} defines a $\Gamma$-equivariant and universally measurable map $\pi : \Delta_{\varphi_o} \ra \mathscr{F}_o(\Gamma)$. Furthermore,
\[
\|\pi(\varphi) - \varphi_o\|_\infty \leq 2D(\varphi_o), \quad \textrm{for all $\varphi \in \Delta_{\varphi_o}$},
\] 
and thus $\pi(\Delta_{\varphi_o}) \subset \varphi_o + [-2D(\varphi_o),2D(\varphi_o)]^{\Gamma}$. In particular, $\overline{\pi(\Delta_{\varphi_o})} \subset \mathscr{F}_o(\Gamma)$ is a separable metrizable space. \vspace{0.1cm}

\item[(ii)] For every $\varphi \in \Delta_{\varphi_o}$, the function $\gamma \mapsto \pi(\varphi)(\gamma)$ is a left-$p$-harmonic quasimorphism which is equivalent to $\varphi_o$. In particular, $\pi(\Delta_{\varphi_o})$ is contained in the left-$p$-harmonic core $\cC_p(\varphi_o)$. \vspace{0.1cm}
\item[(iii)] For every $p$-stationary probability measure $\nu$ on $\Delta_{\varphi_o}$,
there is a $\nu$-conull $\Gamma$-invariant Borel set $\Delta_o \subset \Delta_{\varphi_o}$ such that $\pi|_{\Delta_o} : \Delta_o \ra \mathscr{F}_o(\Gamma)$ is a Borel $\Gamma$-map and $\pi_*(\nu|_{\Delta_o}) = \varphi_{\varphi_o}$, where $\nu_{\varphi_o}$ denotes the unique $p$-stationary probability measure on $\cC_p(\varphi_o)$.
\end{itemize}
\end{lemma}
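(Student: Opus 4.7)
For (i), I would first observe that the inner sum $S_n(\varphi, \gamma) := \sum_{\gamma'} p^{*n}(\gamma')(\varphi(\gamma'\gamma) - \varphi(\gamma'))$ is uniformly bounded in $n$: decomposing the summand as $\varphi(\gamma) + d_\varphi(\gamma', \gamma)$ with $|d_\varphi| \leq D(\varphi)$, and using Proposition \ref{QMTrivialities}(ii) to pass from $\varphi$ to $\varphi_o$, yields $|S_n(\varphi, \gamma) - \varphi_o(\gamma)| \leq 2D(\varphi_o)$ uniformly in $n$ and $\varphi \in \Delta_{\varphi_o}$. Lemma \ref{Lemma_Mokobdoski} then delivers universal measurability of $\varphi \mapsto \pi(\varphi)(\gamma)$ for each $\gamma$ together with the stated bound $\|\pi(\varphi) - \varphi_o\|_\infty \leq 2D(\varphi_o)$; hence $\pi(\Delta_{\varphi_o})$ sits inside the compact metrizable box $\varphi_o + [-2D(\varphi_o), 2D(\varphi_o)]^\Gamma$, whose closure in $\mathscr F_o(\Gamma)$ is separable metrizable. $\Gamma$-equivariance reduces to the algebraic identity $(g.\varphi)(\gamma'\gamma) - (g.\varphi)(\gamma') = (\varphi(\gamma'\gamma g) - \varphi(\gamma')) - (\varphi(\gamma' g) - \varphi(\gamma'))$, which on integration gives $\pi(g.\varphi)(\gamma) = \pi(\varphi)(\gamma g) - \pi(\varphi)(g) = (g.\pi(\varphi))(\gamma)$.

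The heart of (ii) is left-$p$-harmonicity, the one step where \emph{shift-invariance} of $\mathfrak{m}$ is essential. Starting from $(p \ast \pi(\varphi))(\gamma) = \sum_\eta p(\eta)\,\pi(\varphi)(\eta\gamma)$, I would exchange the absolutely convergent sum over $\eta$ with the Mokobodski mean using the Fubini-type part of Lemma \ref{Lemma_Mokobdoski}, then apply the convolution identity $\sum_\eta p(\eta)\, p^{*n}(g \eta^{-1}) = p^{*(n+1)}(g)$ to rewrite the double sum as
\[
\int_\bN \Bigl[\sum_g p^{*(n+1)}(g)\,\varphi(g\gamma) - \sum_g p^{*n}(g)\,\varphi(g)\Bigr] d\mathfrak{m}(n).
\]
Shift-invariance of $\mathfrak{m}$ collapses $p^{*(n+1)}$ back to $p^{*n}$ in the first summand, producing exactly $\pi(\varphi)(\gamma)$. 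Combined with the bound from (i), $\pi(\varphi)$ is thus a left-$p$-harmonic quasimorphism equivalent to $\varphi_o$, i.e.\ $\pi(\varphi) \in \mathcal L_p(\varphi_o)$. To upgrade this to $\pi(\varphi) \in \cC_p(\varphi_o)$, one applies the ergodic joining of Lemma \ref{EtaJoining} to $\pi(\varphi)$ and an arbitrary $\psi \in \mathcal L_p(\varphi_o)$: the unique $p$-stationary measure on $\Delta_{\pi(\varphi)}$ is forced to place full mass on $\Delta_\psi$, and the minimality argument from the proof of Theorem \ref{QuasiMain}(i)--(ii) then forces $\pi(\varphi) \in \Delta_\psi$ for every such $\psi$, giving $\pi(\varphi) \in \bigcap_\psi \Delta_\psi = \cC_p(\varphi_o)$.

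For (iii), I would convert universal measurability to Borel measurability on a $\nu$-conull set in the standard way: for each $\gamma$, fix a Borel representative $\tilde\pi_\gamma$ agreeing $\nu$-almost everywhere with $\varphi \mapsto \pi(\varphi)(\gamma)$; the countable intersection over $\gamma \in \Gamma$ of the agreement sets is $\nu$-conull, and can be further shrunk to a $\Gamma$-invariant Borel $\nu$-conull set $\Delta_o$ using countability of $\Gamma$ and the fact that $\Gamma$-translates of $\nu$-null sets remain $\nu$-null (by $p$-stationarity together with the assumption that $\supp(p)$ generates $\Gamma$). On $\Delta_o$, $\pi$ becomes a Borel $\Gamma$-map valued in $\mathcal L_p(\varphi_o)$ by (ii), and equivariance plus $p$-stationarity of $\nu$ make $\pi_\ast(\nu|_{\Delta_o})$ a $p$-stationary Borel probability measure on $\mathcal L_p(\varphi_o)$, which by Theorem \ref{QuasiMain}(i)--(ii) must coincide with the unique such measure $\nu_{\varphi_o}$.

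The main obstacle is the chain of manipulations under the Mokobodski mean in (ii) -- interchanging the sum over $\eta$ with $\int d\mathfrak{m}$, applying the convolution identity, and exploiting shift-invariance in the correct order -- since these three steps must mesh cleanly for the identity $p \ast \pi(\varphi) = \pi(\varphi)$ to fall out. A secondary difficulty is showing $\pi(\varphi) \in \cC_p(\varphi_o)$ and not merely $\pi(\varphi) \in \mathcal L_p(\varphi_o)$; this requires the joining characterization of the core combined with the uniqueness of the stationary measure on each hull in $\mathcal L_p(\varphi_o)$.
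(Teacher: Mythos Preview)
Your overall strategy matches the paper's, but there is a genuine gap in part (ii) precisely at the interchange you flag as the main obstacle. The Fubini-type clause of Lemma~\ref{Lemma_Mokobdoski} applies only to a \emph{uniformly bounded} sequence $(f_n)$ on $X$. In your setup you are taking $X=\Gamma$, $\mu=p$, and $f_n(\eta)=S_n(\varphi,\eta\gamma)$; but $|S_n(\varphi,\eta\gamma)|$ is controlled only by $2D(\varphi_o)+|\varphi_o(\eta\gamma)|$, which is unbounded in $\eta$ since $\varphi_o$ is an unbounded quasimorphism. Absolute convergence of $\sum_\eta p(\eta)\,S_n(\varphi,\eta\gamma)$ (via $p$-integrability of $\varphi_o$) is not the hypothesis of the lemma and does not justify the swap. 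The paper's remedy is to subtract $\varphi$ first: with $u_n(\gamma):=\pi_n(\varphi)(\gamma)-\varphi(\gamma)$ one has $\|u_n\|_\infty\le D(\varphi_o)$ uniformly, so Mokobodski--Fubini legitimately gives $(p*u_\infty)(\gamma)=\int_{\bN}(p*u_n)(\gamma)\,d\mathfrak{m}(n)$, and adding $(p*\varphi)(\gamma)$ back on both sides yields the required identity $\int_{\bN}(p*\pi_n(\varphi))(\gamma)\,d\mathfrak{m}(n)=(p*\pi(\varphi))(\gamma)$. A related omission: after your displayed expression you split the integrand into $\sum_g p^{*(n+1)}(g)\varphi(g\gamma)$ and $\sum_g p^{*n}(g)\varphi(g)$ and apply shift-invariance to the first piece; but for the mean to be defined on each piece separately (and for linearity to hold) you need each sequence to be bounded in $n$. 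This is exactly the boundedness of $r_n(\varphi):=\sum_{\gamma'}p^{*n}(\gamma')\varphi(\gamma')$, which the paper proves via an antisymmetric representative $\eta_o$ equivalent to $\varphi_o$ and symmetry of $p$ (so that $\sum_{\gamma'}p^{*n}(\gamma')\eta_o(\gamma')=0$).

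A secondary point: your argument that $\pi(\varphi)\in\cC_p(\varphi_o)$ for \emph{every} $\varphi$ does not go through. From Lemma~\ref{EtaJoining} and Theorem~\ref{QuasiMain} you obtain that the unique stationary measure on $\Delta_{\pi(\varphi)}$ gives full mass to $\Delta_\psi$, but that is a statement about the support of a measure, not about the specific point $\pi(\varphi)$; you would still need something like $\supp(\nu)=\Delta_{\pi(\varphi)}$, which is not established. The paper's proof of (ii) in fact only verifies $\pi(\varphi)\in\mathcal L_p(\varphi_o)$, and this is all that (iii) requires: once $\pi_*(\nu|_{\Delta_o})$ is a $p$-stationary probability measure on $\mathcal L_p(\varphi_o)$, Theorem~\ref{QuasiMain}(i)--(ii) forces it to equal $\nu_{\varphi_o}$.
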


\begin{proof}
(i) For fixed $\gamma \in \Gamma$, define 
\[
\pi_n(\varphi)(\gamma) = \sum_{\gamma' \in \Gamma} p^{*n}(\gamma') (\varphi(\gamma'\gamma) - \varphi(\gamma')).
\]
Note that $\|\pi_n(\cdot)(\gamma)\|_\infty \leq 2D(\varphi_o) + |\varphi_o(\gamma)|$ for all $n$, and thus $(\pi_n(\cdot)(\gamma))$ is a uniformly bounded sequence of $\mathscr{B}_{\Delta_{\varphi_o}}$-measurable functions on $\Delta_{\varphi_o}$. 
Hence
\[
\pi(\varphi)(\gamma) = \int_{\bN} \pi_n(\varphi)(\gamma) \, d\mathfrak{m}(n), 
\]
is well-defined for all $(\varphi,\gamma) \in \Delta_{\varphi_o} \times \Gamma$ and $\varphi \mapsto \pi(\varphi)(\gamma)$ is a $\mathscr{B}_{\Delta_{\varphi_o}}^*$-measurable function on $\Delta_{\varphi_o}$ for every $\gamma \in \Gamma$. It readily follows that $\pi$ is universally measurable as a function from $\Delta_{\varphi_o}$ to $\mathscr{F}_o(\Gamma)$. To show that $\pi$ is $\Gamma$-equivariance, it suffices to check that $\pi_n$ is $\Gamma$-equivariant for every $n$. Note that
\begin{align*}
\pi_n(\gamma_o.\varphi)(\gamma) 
&=  \sum_{\gamma' \in \Gamma} p^{\ast n}(\gamma')(\gamma_o.\varphi(\gamma'\gamma)-\gamma_o.\varphi(\gamma'))\\[0.2cm]
&=  \sum_{\gamma' \in \Gamma} p^{\ast n}(\gamma')(\varphi(\gamma'\gamma\gamma_o) - \varphi(\gamma_o)-\varphi(\gamma'\gamma_o)+\varphi(\gamma_o)) \\[0.2cm]
&=  \sum_{\gamma' \in \Gamma} p^{\ast n}(\gamma')(\varphi(\gamma'\gamma\gamma_o) - \varphi(\gamma')-\varphi(\gamma'\gamma_o)+\varphi(\gamma')) \\[0.2cm]
&= \pi_n(\varphi)(\gamma\gamma_o)-\pi_n(\varphi)(\gamma_o)  =  (\gamma_o.\pi_n(\phi))(\gamma),
\end{align*}
for all $\varphi \in \Delta_{\varphi_o}$ and $\gamma_o, \gamma \in \Gamma$, and thus $\pi_n$ is $\Gamma$-equivariant. (ii) 
To see that $\gamma \mapsto \pi(\varphi)(\gamma)$ is left-$p$-harmonic, we compute
\begin{align*}
(p \ast \pi_n(\varphi))(\gamma) 
&= \sum_{\gamma_o \in \Gamma} p(\gamma_o)\pi_n(\varphi)(\gamma_o^{-1}\gamma) = \sum_{\gamma_o \in \Gamma} \sum_{\gamma' \in \Gamma} p(\gamma_o) p^{\ast n}(\gamma')(\varphi(\gamma'\gamma_o^{-1}\gamma)-\varphi(\gamma'))\\[0.2cm]
&= \sum_{\gamma'' \in \Gamma} p^{\ast(n+1)}(\gamma') \varphi(\gamma''\gamma) - \sum_{\gamma' \in \Gamma} p^{\ast n}(\gamma') \varphi(\gamma') \\[0.2cm]
&= \pi_{n+1}(\varphi)(\gamma) + r_{n+1}(\varphi) - r_n(\varphi),
\end{align*}
where $r_n(\varphi) := \sum_{\gamma' \in \Gamma} p^{\ast n}(\gamma') \phi(\gamma')$. We claim that $(r_n)$ is a uniformly bounded sequence of functions on $\Delta_{\varphi_o}$. Indeed, let $\eta_o$ denote the homogeneous quasi-morphism associated with $\varphi_o$. Then $\|\varphi - \eta_o\|_\infty < \infty$ for all $\varphi \in \Delta_{\varphi_o}$ and 
\[
\sum_{\gamma' \in \Gamma} p^{\ast n}(\gamma') \eta_o(\gamma') = 0,
\]
since $p$ is symmetric and $\eta_o$ is anti-symmetric. Hence, for all $n$,
\[
|r_n(\varphi)| = \left| 
\sum_{\gamma' \in \Gamma} p^{\ast n}(\gamma') (\varphi -\eta_o)(\gamma') \right|
\leq D(\varphi_o) + \|\varphi_o - \eta_o\|_\infty, \quad \textrm{for all $\varphi \in \Delta_{\varphi_o}$}.
\]
Since $\mathfrak{m}$ is invariant and $(r_n(\varphi))$ is bounded, we deduce that
\begin{align*}
\int_{\bN} (p * \pi_n(\varphi))(\gamma) \, d\mathfrak{m}(n)
&= 
\int_{\bN} (\pi_{n+1}(\varphi))(\gamma) \, d\mathfrak{m}(n) + \int_{\bN} (r_{n+1}(\varphi) - r_n(\varphi)) \, d\mathfrak{m}(n) \\[0.2cm]
&= \pi(\varphi)(\gamma), \quad \textrm{for all $\varphi \in \Delta_{\varphi_o}$ and $\gamma \in \Gamma$}.
\end{align*}
It remains to show that
\begin{equation}
\label{remains}
\int_{\bN} (p * \pi_n(\varphi))(\gamma) \, d\mathfrak{m}(n) = (p * \pi(\varphi))(\gamma), \quad \textrm{for all $\gamma \in \Gamma$}.
\end{equation}
To do this, fix $\varphi \in \Delta_{\varphi_o}$, and note that $u_n(\gamma) = \pi_n(\varphi)(\gamma) - \varphi(\gamma)$ satisfies $\|u_n\|_\infty \leq D(\varphi_o)$
for all $n$, and
\[
u_\infty(\gamma) = \int_{\bN} u_n(\gamma) \, d\mathfrak{m}(n) = \pi(\varphi)(\gamma) - \varphi(\gamma), \quad \gamma \in \Gamma
\]
In particular,
\[
(p * u_\infty)(\gamma) = (p * \pi(\varphi))(\gamma) - (p*\varphi)(\gamma), \quad \textrm{for all $\gamma \in \Gamma$}.
\]
Furthermore, by the second property of $\mathfrak{m}$ in Lemma \ref{Lemma_Mokobdoski}, applied with $X = \Gamma$ and $\mu = p * \delta_{\gamma}$ for fixed $\gamma \in \Gamma$, we see that
\[
(p * u_\infty)(\gamma) = \int_{\bN} (p * u_n)(\gamma) \, d\mathfrak{m}(n)
= \int_{\bN} (p * \pi_n(\varphi))(\gamma) \, d\mathfrak{m}(n) - (p*\varphi)(\gamma).
\]
Comparing the last two expression proves \eqref{remains}. 
(iii) Let $\nu$ be a probability measure on $\Delta_{\varphi_o}$. 
Since $\pi$ is a universally measurable map from $\Delta_{\varphi_o}$ into a separable metric space by (i), \cite[Lemma 1.2]{Crauel} tells us that we can find a $\nu$-conull Borel set $\Delta' \subset \Delta_{\varphi_o}$ such that $\pi|_{\Delta'}$ is Borel. If $\nu$ in addition is $p$-stationary, then $\nu$ is $\Gamma$-quasi-invariant, and thus $\Delta_o = \bigcap_{\gamma \in \Gamma} \gamma.\Delta'$ is a $\Gamma$-invariant $\nu$-conull Borel set, contained in $\Delta'$, so clearly the restriction of $\pi$ to $\Delta_o$ is again Borel.
\end{proof}

\section{Hulls with many stationary measure}\label{SecMany}
Let $(\Gamma, p)$ be a symmetric measured group and let $\phi_o: \Gamma \to \bR$ be a $p$-integrable quasimorphism.
We have seen in Corollary \ref{LeftHarmonicCase} that if $\phi_o$ is left-$p$-harmonic, then there is a unique $p$-stationary measure on $\Delta_{\varphi_o}$. If $\phi_o$ is not assumed to be left-$p$-harmonic then this uniqueness statement is lost; in fact we are going to show:
\begin{theorem}\label{ManyStationaryMeasures} 
Every quasimorphism $\phi_o: \Gamma \to \R$ is equivalent to a quasimorphism $\phi_1: \Gamma \to \bZ$ such that $|\mathrm{Prob}^{\mathrm{erg}}_p(\Delta_{\varphi_1})| = 2^{\aleph_o}$. If $\Gamma$ surjects onto $\bZ$, then $\phi_1: \Gamma \to \bZ$ can be chosen to be antisymmetric.
\end{theorem}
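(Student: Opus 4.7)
The plan is to construct $\phi_1 = \phi_o' + b$, where $\phi_o'$ is a $k\bZ$-valued representative of the equivalence class of $\phi_o$ and $b : \Gamma \to \bZ$ is a carefully chosen bounded function; then the pointwise residue map $\psi \mapsto \psi \bmod k$ defines a continuous $\Gamma$-equivariant factor of $\Delta_{\phi_1}$ onto a subsystem of $\mathscr F_o(\Gamma;\bZ/k\bZ) := \{f : \Gamma \to \bZ/k\bZ \mid f(e) = 0\}$ that carries uncountably many ergodic $\Gamma$-invariant measures. I will take $k = 2$ for the first statement and $k = 3$ for the antisymmetric refinement.

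Using Remark \ref{QMBasics}, we may assume $\phi_o$ is antisymmetric and $\bZ$-valued. An explicit rounding, e.g.\ $\phi_o'(\gamma) := 2\lfloor \phi_o(\gamma)/2\rfloor$ for $k = 2$ or $\phi_o'(\gamma) := 3\lfloor \phi_o(\gamma)/6\rfloor - 3\lfloor \phi_o(\gamma^{-1})/6\rfloor$ for $k = 3$, yields a $k\bZ$-valued representative (antisymmetric in the second case) at bounded distance from $\phi_o$. For the first part, pick $b \in \{0,1\}^\Gamma$ Baire-generically with $b(e) = 0$; a standard category argument shows such a $b$ has dense $\Gamma$-orbit in $\mathscr F_o(\Gamma;\bZ/2\bZ)$ under the normalized action inherited from \eqref{ActionOfGamma}. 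For the second part, let $\chi : \Gamma \twoheadrightarrow \bZ$ be the given surjection, choose an antisymmetric $b_\bZ : \bZ \to \{-1,0,1\}$ with $b_\bZ(0) = 0$ whose $\bZ$-shift orbit is dense in $\{-1,0,1\}^\bZ$ (obtained by extending a generic sequence on $\bZ_{>0}$ by antisymmetry), and set $b := b_\bZ \circ \chi$. Then $\phi_1 := \phi_o' + b$ is an integer-valued quasimorphism equivalent to $\phi_o$, antisymmetric in the second case.

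Since $\Delta_{\phi_1}$ consists of $\bZ$-valued functions (by integrality of $\phi_1$ and Proposition \ref{QMTrivialities}(ii)), the map $\pi_k(\psi)(\gamma) := \psi(\gamma) \bmod k$ defines a continuous $\Gamma$-equivariant surjection from $\Delta_{\phi_1}$ onto $\overline{\Gamma.(b \bmod k)} \subset \mathscr F_o(\Gamma;\bZ/k\bZ)$, where we use that $\phi_o'$ being $k\bZ$-valued gives $\phi_1 \bmod k = b \bmod k$. The natural quotient $q : (\bZ/k\bZ)^\Gamma \to \mathscr F_o(\Gamma;\bZ/k\bZ)$, $F \mapsto F - F(e)$, is continuous and $\Gamma$-equivariant, and pushes the Bernoulli family $\{\mu_t\}_{t \in (0,1/k)}$ forward to $\Gamma$-invariant and (by Bernoulli mixing) $\Gamma$-ergodic probability measures $\nu_t$ on $\mathscr F_o(\Gamma;\bZ/k\bZ)$; these are pairwise distinct, e.g.\ for $k = 2$ the first moment $\int f(\gamma)\, d\nu_t = 2t(1-t)$ separates them on $(0,1/2]$. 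In the first part the genericity of $b$ gives $\pi_2(\Delta_{\phi_1}) = \mathscr F_o(\Gamma;\bZ/2\bZ)$, so all $\nu_t$ lie in the factor. For the second part the analogous construction carried out on $\bZ$ produces ergodic $\bZ$-invariant measures on $\mathscr F_o(\bZ;\bZ/3\bZ)$, which are then pushed forward via the $\chi$-equivariant embedding $\chi^* : f \mapsto f \circ \chi$ to an uncountable family of ergodic $\Gamma$-invariant measures supported on $\chi^*\mathscr F_o(\bZ;\bZ/3\bZ) \subseteq \pi_3(\Delta_{\phi_1})$.

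Finally, each such ergodic $\Gamma$-invariant (hence $p$-stationary) $\bar\nu$ on $\pi_k(\Delta_{\phi_1})$ lifts to a $p$-stationary measure on $\Delta_{\phi_1}$ by Markov-Kakutani applied to the non-empty, weak-$\ast$ compact, convex, $p$-invariant set of probability measures projecting to $\bar\nu$. Since $\bar\nu$ is extremal among $p$-stationary measures, each ergodic component of any such lift still projects to $\bar\nu$, producing an element of $\mathrm{Prob}_p^{\mathrm{erg}}(\Delta_{\phi_1})$ for each distinct $\bar\nu$, and distinct $\bar\nu$ yield distinct lifts. The main obstacle I anticipate is verifying rigorously the orbit-density properties of $b$ (respectively $b_\bZ$) while respecting $b(e) = 0$ and antisymmetry; this is a Baire-category / diagonal construction that needs extra care in the antisymmetric case, where the $\Gamma$-action on $\mathscr F_o(\Gamma;\bZ/3\bZ)$ does not preserve antisymmetry.
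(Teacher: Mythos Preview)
Your approach is correct and follows essentially the same strategy as the paper: write $\phi_1$ as a $k\bZ$-valued quasimorphism plus a bounded correction, use the induced factor map from $\Delta_{\phi_1}$ onto the hull of the correction, and arrange for that hull to carry uncountably many ergodic invariant measures coming from Bernoulli shifts. Your $\bmod\, k$ factor map is equivalent to the paper's joint-hull projection (the paper takes $k=3$ throughout and shows the addition map $\Delta_{(\phi_o,s_o)} \to \Delta_{\phi_o+s_o}$ is a homeomorphism because $3\bZ \cap \{-2,\dots,2\} = \{0\}$), and your lifting via Markov--Kakutani plus ergodic decomposition is exactly what the paper uses implicitly when it asserts $|\Prob_p^{\mathrm{erg}}(\Delta_{\phi_1})| \geq |\Prob_p^{\mathrm{erg}}(\Delta_{s_o})|$.

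The one place where the paper is more explicit is precisely the obstacle you flag at the end: the orbit-density of the antisymmetric correction. Rather than invoking Baire category, the paper constructs $s_o = \chi_A - \chi_{-A}$ for a set $A \subset \bZ$ that is \emph{left-generic} (positive translates are dense in $2^\bZ$) and \emph{right-vanishing} ($A - n \to \emptyset$ as $n \to -\infty$), obtained from a one-sided Bernoulli-generic point; it then computes $\Delta_{s_o}$ exactly and exhibits an equivariant copy of $2^\bZ_*$ inside it. Your genericity argument does work---if $b_\bZ|_{\bZ_{>0}}$ contains every finite pattern, then shifting far to the right realizes any prescribed window including the value $0$ at the origin, which forces the \emph{normalized} orbit of $b_\bZ \bmod 3$ to be dense in $\mathscr F_o(\bZ;\bZ/3\bZ)$---but the paper's explicit construction sidesteps the worry you raise about the normalized action not preserving antisymmetry.
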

Note that the elements of $\Delta_{\varphi_1}$ are still heavily restricted by Theorem \ref{Theorem_Factor}.
\subsection{Reduction to the case of bounded functions}
We start the proof of Theorem \ref{ManyStationaryMeasures} with some easy reductions.
Since by Remark \ref{QMBasics} every real-valued quasimorphism $\phi_o$ is equivalent to an antisymmetric quasimorphism which takes values in $3\bZ$, we may as well assume that this is the case for $\phi_o$. We are then going to define
\begin{equation}
\phi_1 := \phi_o + s_o,
\end{equation}
where $s_o \in \mathscr{F}_o(\Gamma)$ is a carefully chosen antisymmetric function with $s_o(\Gamma) \subset \{-1,0,1\}$. The proof of Theorem \ref{ManyStationaryMeasures} will be based on the following observation:
\begin{lemma}\label{UncountableMain}
Let $\varphi_o: \Gamma \to 3\bZ$ be an (antisymmetric) quasimorphism and let $\varphi_1 := \varphi_o + s_o$, where
$s_o \in \mathscr{F}_o(\Gamma)$ be (antisymmetric) with $s_o(\Gamma) \subset \{-1,0,1\}$. Then $\varphi_1: \Gamma \to \Z$ is an (antisymmetric) quasimorphism and there is a $\Gamma$-equivariant continuous 
surjection $\beta: \Delta_{\varphi_1} \ra \Delta_{s_o}$.
%\[
%\beta: \Delta_{\varphi_o + s_o} \ra \Delta_{s_o}.
%\]
\end{lemma}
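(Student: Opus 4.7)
The first claim is immediate: since $\|s_o\|_\infty\le 1$, the function $\varphi_1=\varphi_o+s_o$ lies at bounded distance from the quasimorphism $\varphi_o$ and is itself a quasimorphism with $\varphi_1(e)=0$. As $3\bZ+\{-1,0,1\}\subset\bZ$, it takes integer values, and antisymmetry is inherited from $\varphi_o$ and $s_o$ in the antisymmetric case.

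For the surjection $\beta$, my plan is to factor through the diagonal joint orbit closure
\[
\Omega_o:=\overline{\{(\gamma.\varphi_o,\,\gamma.s_o):\gamma\in\Gamma\}}\subset\Delta_{\varphi_o}\times\Delta_{s_o},
\]
equipped with the diagonal $\Gamma$-action. This compact $\Gamma$-space admits two continuous $\Gamma$-equivariant maps: the second projection $\pi_2:\Omega_o\to\Delta_{s_o}$ and the sum map $\sigma:\Omega_o\to\Delta_{\varphi_1}$ given by $(\tilde\varphi,\tilde s)\mapsto\tilde\varphi+\tilde s$. Both are surjective, since each image is compact (hence closed in the target) and contains the respective orbit $\Gamma.s_o$ or $\Gamma.\varphi_1$, which is dense. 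Granted that $\sigma$ is a bijection, it is a $\Gamma$-equivariant homeomorphism (continuous bijection from compact to Hausdorff), and $\beta:=\pi_2\circ\sigma^{-1}$ is then automatically a continuous $\Gamma$-equivariant surjection onto $\Delta_{s_o}$.

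The injectivity of $\sigma$ is the main expected obstacle. Suppose $(\tilde\varphi_i,\tilde s_i)\in\Omega_o$ satisfy $\tilde\varphi_1+\tilde s_1=\tilde\varphi_2+\tilde s_2$. At each coordinate $\gamma\in\Gamma$ the difference $\tilde\varphi_1(\gamma)-\tilde\varphi_2(\gamma)=\tilde s_2(\gamma)-\tilde s_1(\gamma)$ must lie in $3\bZ\cap\{-4,\ldots,4\}=\{-3,0,3\}$, using $\Delta_{\varphi_o}\subset(3\bZ)^\Gamma$ and $\Delta_{s_o}\subset\{-2,\ldots,2\}^\Gamma$. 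The hard part is to rule out a discrepancy of $\pm 3$: such a discrepancy at some coordinate would force the defining nets $(\gamma_n^{(i)})$ of the two points to have incompatible base values $\lim_n s_o(\gamma_n^{(i)})\in\{-1,0,1\}$, while simultaneously requiring a compensating $\mp 3$ jump in the $\varphi_o$-component at the same coordinate. Exploiting the joint nature of $\Omega_o$---namely that both components arise from a single net in $\Gamma$---I expect to derive a contradiction from this configuration, which would establish injectivity and complete the construction.
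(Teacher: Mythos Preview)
Your overall strategy is exactly the paper's: pass to the joint orbit closure $\Omega_o$ (the paper writes $\Delta_{(\varphi_o,s_o)}$), show that the sum map $\sigma\colon\Omega_o\to\Delta_{\varphi_1}$, $(\tilde\varphi,\tilde s)\mapsto\tilde\varphi+\tilde s$, is a $\Gamma$-equivariant homeomorphism, and then put $\beta=\pi_2\circ\sigma^{-1}$. Continuity, equivariance and surjectivity of $\sigma$ and $\pi_2$ are handled identically in both.

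The only substantive point is the injectivity of $\sigma$, and here your proposal has a genuine gap. You correctly note that $\Delta_{\varphi_o}\subset(3\bZ)^\Gamma$ and that $\Delta_{s_o}\subset\{-2,\dots,2\}^\Gamma$ (since $(\gamma.s_o)(\gamma')=s_o(\gamma'\gamma)-s_o(\gamma)$), so that $\tilde s_2-\tilde s_1$ lands in $3\bZ\cap\{-4,\dots,4\}=\{-3,0,3\}$. You then say you ``expect to derive a contradiction'' in the $\pm 3$ case from the joint nature of $\Omega_o$, but the paragraph that follows is not an argument: no concrete contradiction is exhibited, and the phrase about ``incompatible base values'' does not isolate one.

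For comparison, the paper closes this step in one line by asserting that ``$s_1$ and $s_2$ take values in $\{-1,0,1\}$'', so that $s_2-s_1\in 3\bZ\cap\{-2,\dots,2\}=\{0\}$. Your sharper range $\{-2,\dots,2\}$ for elements of $\Delta_{s_o}$ is the correct one under the action \eqref{ActionOfGamma}, so the paper's assertion, taken at face value for arbitrary $s_o$ with values in $\{-1,0,1\}$, is not justified. In other words, you have put your finger on a point the paper glosses over; but as written, neither your sketch nor the paper's one-line claim actually rules out the $\pm 3$ discrepancy in the generality stated.
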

\begin{proof}
Let $\Delta_{(\varphi_o,s_o)}$ denote the joint hull of $(\varphi_o,s_o)$, i.e. 
\[
\Delta_{(\varphi_o,s_o)} = \overline{\Gamma.(\varphi_o,s_o)} \subset \Delta_{\varphi_o} \times \Delta_{s_o}.
\]
We claim that the map 
\[
\alpha : \Delta_{(\varphi_o,s_o)} \ra \Delta_{\varphi_o+s_o}, \enskip (\varphi,s) \mapsto \varphi+s
\]
is a $\Gamma$-equivariant homeomorphism. Indeed, continuity, surjectivity and $\Gamma$-equivariance of $\alpha$ are easy to check, and since $\Delta_{(\varphi_o,s_o)}$ is compact, it suffices to check that the map is injective. To see this, pick two pairs $(\varphi_1,s_1)$ and $(\varphi_2,s_2)$ in $\Delta_{(\varphi_o,s_o)}$ such that 
$\varphi_1 + s_1 = \varphi_2 + s_2$. Then, since both $\varphi_1(\Gamma)$ and $\varphi_2(\Gamma)$ are contained in $3\bZ$ and $s_1$ and $s_2$ take values in $\{-1,0,1\}$, we get
\[
\varphi_1 - \varphi_2 = s_2 - s_1 \in 3\bZ \cap \{-2,-1,0,1,2\} = \{0\},
\]
and thus $\varphi_1 = \varphi_2$ and $s_1 = s_2$. To construct the map $\beta$, we compose the inverse of $\alpha$ with the projection $\Delta_{(\varphi_o,s_o)} \ra \Delta_{s_o}$, $(\varphi, s) \mapsto \varphi$.
\end{proof}
We deduce in particular, that
\begin{equation}
|\Prob_p^{\mathrm{erg}}(\Delta_{\varphi_1})| \geq |\Prob_p^{\mathrm{erg}}(\Delta_{s_o})|.
\end{equation}
We have thus reduced the proof of Theorem \ref{ManyStationaryMeasures} to the case where $\phi_o = s_o$ takes values in $\{-1,0,1\}$.
\subsection{The unconstrained case}
To finish the proof of Theorem \ref{ManyStationaryMeasures} we need to construct an antisymmetric function $s_o: \Gamma\to \{-1,0,1\}$ with $s_o(e) = 0$ such that 
$|\mathrm{Prob}^{\mathrm{erg}}_p(\Delta_{s_o})| = 2^{\aleph_o}$. If we do not insist of antisymmetry of $\phi_1$, then constructing such a map $s_o$ is easy:
\begin{construction}\label{etaForS0} Consider the $\Gamma$-space $2^\Gamma_* := 2^\Gamma \setminus\{\emptyset\}$ with $\Gamma$-action given by $\gamma. A := A \gamma^{-1}$.
We have a continuous $\Gamma$-equivariant map
\[
\eta: 2^\Gamma \to \mathscr{F}_o(\Gamma), \quad \eta(A)(\gamma) := \chi_A(\gamma) - \chi_A(e),
\]
which is injective on $2^\Gamma_*$ and satisfies $\eta(A)(\Gamma) \in \{-1,0,1\}$ for every $A \in 2^\Gamma_*$. If we choose $A_o$ in such a way that the $\Gamma$-orbit of $A_o$ is dense in $2^\Gamma_*$ and set $s_o := \eta(A)$, then $\eta$ restricts to an equivariant embedding $2^\Gamma_* \hookrightarrow \Delta_{s_o}$. This readily implies that $ |\Prob_p^{\mathrm{erg}}(\Delta_{s_o})| = 2^{\aleph_o}$.
\end{construction}
This proves the first part of Theorem \ref{ManyStationaryMeasures}.
\subsection{The antisymmetric case}
To prove the second part of Theorem \ref{ManyStationaryMeasures} we need to find an \emph{antisymmetric} function $s_o: \Gamma\to \{-1,0,1\}$ with $s_o(e) = 0$ such that 
$|\mathrm{Prob}^{\mathrm{erg}}_p(\Delta_{s_o})| = 2^{\aleph_o}$, wherewe assume the existence of a surjection $\delta : \Gamma \to \bZ$. We may then assume without loss of generality that $\Gamma=\bZ$. Indeed, if $s_o \in \mathscr{F}_o(\bZ)$ has the desired properties, then so has $\delta^*s_o \in \mathscr{F}_o(\Gamma)$. We thus consider the antisymmetrization of the map $\eta: 2^\bZ_\ast \to \mathscr{F}_o(\bZ)$ from Construction \ref{etaForS0} as given by
\[
\xi: 2^{\bZ}_\ast \to \mathscr{F}_o(\bZ), \quad \xi(A) := \eta(A) - \eta(-A) = \chi_A-\chi_{-A}.
\]
By construction, this map is continuous, and $\xi(A)$ is antisymmetric and takes values in $\{-1,0,1\}$ for all $A \in 2^{\bZ}_\ast$.
\begin{definition} We say that $A \subset \bZ$ is \emph{left-generic} (respectively \emph{right-vanishing}) if
\[
\overline{\{ n.A \, : \, n \geq 0 \}} = 2^{\bZ} \quad (\text{respectively} \lim_{n \ra -\infty} n.A = \emptyset).
\]
\end{definition}
\begin{lemma} If $A$ is left-generic and right-vanishing and $s_o = \xi(A)$, then \[
\Delta_{s_o} = \bZ.s_o \cup \eta(2_*^{\bZ}) \cup (-\eta(2_*^{\bZ})), \quad \text{and hence}\quad |\Prob_p^{\mathrm{erg}}(\Delta_{s_o})| = 2^{\aleph_o}.
\]
\end{lemma}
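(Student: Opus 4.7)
The plan is to compute $\Delta_{s_o}$ explicitly by analysing all subsequential limits of $\{n.s_o : n \in \bZ\}$ in $\mathscr F_o(\bZ)$, and then to derive the claimed cardinality by exhibiting a continuous $\bZ$-equivariant embedding of a large shift-invariant subspace of $2^{\bZ}$ into $\Delta_{s_o}$ via $\eta$.

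First I would unwrap the $\bZ$-action and the definition of $\xi$ to obtain
\[
n.s_o = \eta(A-n) - \eta(-A-n),
\]
where $\bZ$ acts on $2^{\bZ}$ by $n.B = B-n$, and then analyse the two summands as $|n|\to\infty$. As $n \to +\infty$, right-vanishing forces $-A-n \to \emptyset$ in $2^{\bZ}$ (since $A$ bounded below gives $-A$ bounded above, so its left-translates leave every finite window), while left-genericity of $A$ lets us pass to a subsequence along which $A-n$ converges to any prescribed $D \in 2^{\bZ}$; combining these yields $\eta(D) = \eta(D)-\eta(\emptyset)$ as a limit for every $D$, and thus $\eta(2^{\bZ}_\ast) \subset \Delta_{s_o}$ (using $\eta(\emptyset)=\eta(\bZ)=0$). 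The case $n \to -\infty$ is parallel but the two summands swap roles: now $A-n = A+|n|$ is forced to $\emptyset$ by boundedness of $A$, while the identity $\chi_{-A-n}(\gamma') = \chi_{A-|n|}(-\gamma')$ shows that a subsequence on which $A-|n|\to D$ produces $-A-n \to -D$; letting $D$ range over $2^{\bZ}$ gives $-\eta(2^{\bZ}_\ast) \subset \Delta_{s_o}$.

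For the reverse inclusion, if $n_k.s_o \to \varphi$ and $n_k$ is bounded, I pass to a constant subsequence and get $\varphi \in \bZ.s_o$. Otherwise $|n_k|\to\infty$ and, by compactness of $2^{\bZ}$, I may extract a subsubsequence on which both $A-n_k$ and $-A-n_k$ converge; the boundedness arguments above force one of these two limits to be $\emptyset$ according to the sign of $n_k\to\pm\infty$, so $\varphi \in \eta(2^{\bZ}_\ast) \cup (-\eta(2^{\bZ}_\ast))$. This closes the equality. For the cardinality, $\eta: 2^{\bZ} \to \mathscr F_o(\bZ)$ is continuous, $\bZ$-equivariant, and injective off the two-point $\bZ$-fixed set $\{\emptyset,\bZ\}$. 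The shift on $2^{\bZ}$ admits $2^{\aleph_0}$ pairwise distinct ergodic atomless $\bZ$-invariant probability measures (Bernoulli measures with distinct parameters), none of which charges this two-point set. Each is automatically $p$-stationary and pushes forward injectively under $\eta$ to an ergodic $p$-stationary probability measure supported in $\eta(2^{\bZ}_\ast) \subset \Delta_{s_o}$, giving $|\Prob_p^{\mathrm{erg}}(\Delta_{s_o})| \geq 2^{\aleph_0}$; the reverse inequality is automatic from compact metrizability of $\Delta_{s_o}$.

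The most delicate step is the $n\to-\infty$ limit analysis, where the reflection $A \mapsto -A$ hidden in the definition of $\xi(A)$ must be shown to convert the forward-time left-genericity of $A$ into the desired backward-time behaviour of $-A-n$; the identity $\chi_{-A-n}(\gamma') = \chi_{A-|n|}(-\gamma')$ is the crucial bridge, since it transports left-generic convergence $A - m_k \to D$ through the reflection into convergence $-A + m_k \to -D$.
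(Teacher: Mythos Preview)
Your proposal is correct and follows essentially the same approach as the paper. The only cosmetic difference is that you work with the identity $n.s_o = \eta(A-n) - \eta(-A-n)$ directly, whereas the paper uses the equivalent form $(k.s_o)(n) = \eta(k.A)(n) - \eta((-k).A)(-n)$ involving a reflection in the argument; you also spell out the cardinality step via Bernoulli measures, where the paper simply appeals back to Construction \ref{etaForS0}.
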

\begin{proof}
Note that for all $k, n \in \bZ$ we have
\begin{align*}
(k.s_o)(n) 
&=
s_o(n+k) - s_o(k) = \chi_A(n+k) - \chi_A(-n-k) - \chi_A(k) + \chi_A(-k)\\
&= 
(\chi_{A-k}(n) - \chi_{A-k}(0)) - (\chi_{A+k}(-n) - \chi_{A+k}(0)),
\end{align*}
which we can rewrite as
\begin{equation}\label{kso}
(k.s_o)(n)  = \eta(k.A)(n) - \eta((-k).A)(-n) \quad \text{for all }k,n \in \bZ.
\end{equation}
To show that $\eta(2^{\bZ}_\ast) \subset \Delta_{s_o}$, let $B \in 2_\ast^{\bZ}$. Since $A$ is left-generic and right-varnishing, we find a sequence $k_j \to \infty$ such that $k_j.A \ra B$ and $(-k_j).A \ra \emptyset$. We thus conclude from \eqref{kso} that
\begin{align*}
\eta(B)(n) =  \eta(B)(n) - \eta(\emptyset)(-n) = \lim_{j \to \infty}  \eta(k_j.A)(n) - \eta((-k_j).A)(-n) = \lim_{j \to \infty} (k_j.s_o)(n)
\end{align*}
for all $n \in \bZ$, and hence $\eta(B) = \lim_{j \to \infty} k_j.s_o \in \Delta_{s_o}$. Since $B \in 2_\ast^\bZ$ was arbitrary, this shows that $\eta(2^{\bZ}_\ast) \subset \Delta_{s_o}$. Replacing the sequence $(k_j)$ with $(-k_j)$, we see that 
\[
-\eta(-B)(n) =  \eta(\emptyset)(n) - \eta(B)(-n) = \lim_{j \to \infty}((-k_j).s_o)(n),
\]
for all $n \in \bZ$, and hence $-\eta(-B) =  \lim_{j \to \infty} (-k_j).s_o \in \Delta_{s_o}$. Consequently, $-\eta(2_*^{\bZ}) \subset \Delta_{s_o}$,
and thus
\begin{equation}\label{OrbitClosureso}
\bZ.\xi_A \cup \eta(2_\ast^{\bZ}) \cup (-\eta(2_\ast^{\bZ})) \subset \Delta_{\xi_A}.
\end{equation}
Finally, if $(k_j)$ is any unbounded sequence, then upon passing to a subsequence, we may assume that $(k_j)$ tends to either $+\infty$ or $-\infty$ and that $(\pm k_j).A$ converge. There thus exists $B \in 2^{\bZ}$ so that either
\[
(k_j.A \ra B \text{ and } (-k_j).A \ra \emptyset) \qor (k_j.A \ra \emptyset  \text{ and } (-k_j).A \ra B).
\]
In the first case, $k_j.s_o \ra \eta(B)$, while in the second case, $k_j.s_o \ra -\eta(-B)$. This shows that equality holds in \eqref{OrbitClosureso} and finishes the proof.
\end{proof}
To conclude the proof of Theorem \ref{ManyStationaryMeasures} we thus have to construct a left-generic and right-vanishing subset of $\bZ$. 
\begin{construction}
We consider the one-sided Bernoulli shift 
\[
T : \{0,1\}^{\bN} \ra \{0,1\}^{\bN}, \enskip (T\omega)_k = \omega_{k+1}, \enskip k \geq 0.
\]
The product measure $\mu = \left(\frac{1}{2}\delta_o + \frac{1}{2}\delta_1\right)^{\otimes \bN}$ is $T$-invariant and ergodic, hence $\mu$-almost every point $x$ in $\{0,1\}^{\bN}$ has a dense $T$-orbit. For any such $x$ we set
\[
A(x) : = \{ k \in\ \bZ \, : \, k \geq 0, \enskip x_k = 1\}.
\]
We then observe that for all $n \geq 0$ we have
\begin{align*}
A(T^n x) &= \{ k \in \bZ \, : \, k \geq 0, \enskip x_{k+n} = 1\} = (A(x) - n) \cap \bN.
\end{align*}
\end{construction}
\begin{lemma} If $x \in \{0,1\}^{\bN}$ has a dense $T$-orbit, then $A(x)$ is left-generic and right-vanishing.
\end{lemma}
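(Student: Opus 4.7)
The plan is to verify the two properties separately, with right-vanishing being immediate from the construction and left-genericity following from a direct translation of density of the one-sided orbit of $x$ under $T$ to density of translates of $A(x)$ in $2^{\bZ}$.

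For right-vanishing, I would note that by construction $A(x) \subset \bN$. For $n < 0$, the translate $n.A(x) = A(x) - n = A(x) + |n|$ is contained in $[|n|,\infty)$. Hence for any fixed $k \in \bZ$ one has $k \notin n.A(x)$ as soon as $|n| > k$, which is exactly pointwise convergence of indicator functions $\chi_{n.A(x)} \to 0$ as $n \to -\infty$, i.e.\ $n.A(x) \to \emptyset$ in $2^{\bZ}$.

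For left-genericity, fix an arbitrary $B \in 2^{\bZ}$. I need to produce a sequence $n_j \geq 0$ with $n_j.A(x) = A(x) - n_j \to B$ in the product topology on $2^{\bZ}$, i.e.\ for every finite $F \subset \bZ$ eventually $(A(x) - n_j) \cap F = B \cap F$. Given such $F$, choose $m \geq 0$ with $F + m \subset \bN$, and define a prescribed pattern $y \in \{0,1\}^{F+m}$ by $y_{k+m} := \chi_B(k)$ for $k \in F$. Density of the $T$-forward orbit of $x$ in $\{0,1\}^{\bN}$ implies that the basic cylinder $U_y := \{z \in \{0,1\}^{\bN} : z_{k+m} = y_{k+m} \text{ for } k \in F\}$ contains $T^{n'}x$ for some $n' \geq 0$. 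Setting $n := n' + m \geq 0$, one checks for every $k \in F$ that $k + n \geq 0$ and $x_{k+n} = \chi_B(k)$, i.e.\ $(A(x) - n) \cap F = B \cap F$.

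The only subtlety is that I need a sequence $n_j$ with finer and finer windows, and in particular I need arbitrarily large $n'$. This follows from the standard fact that for a topologically transitive system on a space without isolated points, the return-time set of a transitive point to any non-empty open set is infinite: otherwise $\{T^k x : k > N\}$ would miss some non-empty open $U$ while still being dense (as $\{0,1\}^\bN$ has no isolated points), a contradiction. Applying this with $U = U_y$, I pick an exhaustion $F_1 \subset F_2 \subset \cdots$ of $\bZ$ by finite sets and inductively choose $n_j \geq j$ with $(A(x) - n_j) \cap F_j = B \cap F_j$; then $n_j.A(x) \to B$ in $2^{\bZ}$. Since $B$ was arbitrary, $A(x)$ is left-generic. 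I do not anticipate a serious obstacle; the whole argument is just the observation that the map $x \mapsto A(x)$ intertwines the shift $T$ with translation on $2^{\bZ}$ (up to the non-negative truncation, which is harmless because we only shift to the left).
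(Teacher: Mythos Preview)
Your proof is correct and follows essentially the same route as the paper: right-vanishing from $A(x)\subset\bN$, and left-genericity by translating a given $B\cap F$ into a cylinder pattern in $\{0,1\}^{\bN}$ via a shift by $m$, then invoking density of the $T$-orbit. The one difference is that your third paragraph (arbitrarily large return times, no isolated points) is not needed: to show $\overline{\{n.A(x):n\geq 0\}}=2^{\bZ}$ it suffices to find, for each $B$ and each finite window $F$, \emph{some} $n\geq 0$ with $(A(x)-n)\cap F=B\cap F$, which is exactly what your second paragraph already establishes; the paper stops there.
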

\begin{proof} Since the $T$-orbit of $x$ is dense, we see that for every $y \in \{0,1\}^{\bN}$ and for every finite set $F \subset \bN$, there exists $n = n(y,F)$ such 
that
\begin{equation}
\label{AF}
(A(x)-n) \cap F = A(y) \cap F.
\end{equation}
We claim that this implies that $A$ is left-generic. Indeed, fix $B \in 2^{\bZ}$ and a finite set $F \subset \bZ$. Choose a large positive integers $m$ such that $F + m \subset \bN$. Define $y \in \{0,1\}^{\bN}$ by
\[
y_k = 1 \iff k \in (B+m) \cap \bN.
\]
Note that $A(y) = (B+m) \cap \bN$. By \eqref{AF}, we can find $n$ such that
\[
(A(x) - n) \cap (F+m) = A(y) \cap (F+m) = (B+m) \cap (F+m) = (B \cap F) + m,
\]
and thus
\[
(A(x) - (n+m)) \cap F = B \cap F.
\]
In other words, for every finite set $F \subset \bZ$, we can find $k \geq 0$ such that
\[
(A(x)-k) \cap F = B \cap F.
\]
Since $B$ is arbitrary, this shows that $\overline{\{ k.A \, : \, k \geq 0\}} = 2^{\bZ}$, and thus $A(x)$ is left-generic. Furthermore, since $A(x) \subset \bN$, we have
\[
n.A(x) = A(x)-n \longrightarrow \emptyset \quad \textrm{as $n \ra -\infty$},
\]
so $A(x)$ is right-vanishing as well. 
\end{proof}
This finishes the proof of Theorem \ref{ManyStationaryMeasures}.

\section{Hulls of twisted sets}\label{SecTwisted}
\subsection{Twisted sets and uniform approximate lattices}
Throughout this section let $(\Gamma,p)$ be a symmetric measured group. We recall from the introduction that if $\varphi_o: \Gamma \to \R$ is a quasimorphism and $P_o \subset \bR$ is a closed subset, then the $\varphi_o$-twist of $P_o$ is defined as the subset
\[
P_o(\varphi_o) 
= 
\{ (\gamma, t) \in \Gamma \times \R
\mid
\varphi_o(\gamma)+t \in P_o \} \subset \Gamma \times \bR.
\]
Note that $P_o(\varphi_o)$ is always a \emph{closed} subset of $\Gamma \times \bR$. Moreover, if $\pr_\Gamma$ and $\pr_\bR$ denote the projections onto the two factors of $\Gamma \times \bR$, then $\pr_\Gamma(P_o(\varphi_o)) = \Gamma$ and $\pr_\bR(P_o(\varphi_o)) = P_o - \varphi_o(\Gamma)$. The twist construction can be used to produce interesting examples of approximate subgroups of $\Gamma \times \bR$; we recall that a subset $\Lambda \subset \Gamma \times \bR$ is called an approximate subgroup if it is symmetric, contains the identity and satisfies $\Lambda\Lambda \subset \Lambda F$ for some finite subsets $F\subset \Gamma \times \bR$.
\begin{proposition}\label{ApproxGrp} If $P_o \subset \R$ is an approximate subgroup and $\varphi_o$ is an antisymmetric quasimorphism with finite defect set, then $P_o(\varphi_o)$ is an approximate subgroup of $\Gamma \times \bR$. 
\end{proposition}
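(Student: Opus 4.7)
\textbf{Proof sketch for Proposition \ref{ApproxGrp}.} The plan is to check the three defining axioms of an approximate subgroup, namely symmetry, containment of the identity, and the product-finite-covering property, for $\Lambda := P_o(\varphi_o)$ directly.

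Containment of the identity is immediate: $\varphi_o(e)+0=0\in P_o$ because $\varphi_o$ is normalized and $P_o$ is an approximate subgroup of $\R$. Symmetry is equally straightforward: from $\varphi_o(\gamma)+t\in P_o$, antisymmetry of $\varphi_o$ gives $\varphi_o(\gamma^{-1})+(-t) = -(\varphi_o(\gamma)+t)\in -P_o = P_o$, so $\Lambda^{-1}=\Lambda$.

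The main step, and the only one actually using the hypothesis that the defect set $\mathscr{D}(\varphi_o)$ is \emph{finite}, is the covering condition. Fix a finite set $F_0 \subset \R$ with $P_o + P_o \subset P_o + F_0$, which exists because $P_o$ is an approximate subgroup. For $(\gamma_i,t_i)\in \Lambda$ ($i=1,2$), I would decompose
\[
(\varphi_o(\gamma_1)+t_1) + (\varphi_o(\gamma_2)+t_2) = p + f, \quad p \in P_o,\ f \in F_0,
\]
and set $d := \varphi_o(\gamma_1\gamma_2)-\varphi_o(\gamma_1)-\varphi_o(\gamma_2) \in \mathscr{D}(\varphi_o)$. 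A direct computation then yields $\varphi_o(\gamma_1\gamma_2)+(t_1+t_2-f-d) = p \in P_o$, so the element $(\gamma_1\gamma_2,\,t_1+t_2-f-d)$ lies in $\Lambda$; multiplying on the right by $(e,f+d)$ recovers $(\gamma_1,t_1)(\gamma_2,t_2)$. Therefore $\Lambda\Lambda \subset \Lambda \cdot F$ with the finite set $F := \{e\}\times(F_0+\mathscr{D}(\varphi_o))$.

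No genuine obstacle arises; the only subtle point is why the finite-defect hypothesis is really needed rather than merely bounded defect. If $\mathscr{D}(\varphi_o)$ were only bounded, the auxiliary set $F$ produced above would be bounded but not finite, so one would only obtain that $\Lambda$ is commensurated by an approximate subgroup, not that it is itself one. This is precisely why the paper insists on integer-valued (equivalently, finite-defect) representatives when using the twist construction to manufacture uniform approximate lattices in $\Gamma\times\R$.
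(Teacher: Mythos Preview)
Your proof is correct and follows essentially the same approach as the paper's: both verify identity and symmetry directly from normalization, antisymmetry, and symmetry of $P_o$, and both establish the covering condition by combining a finite $F_0$ with $P_o+P_o\subset P_o+F_0$ and the finite defect set $\mathscr{D}(\varphi_o)$ to produce the finite set $F=\{e\}\times(F_0+\mathscr{D}(\varphi_o))$. Your explicit decomposition via the correction term $d$ is just a slightly more concrete unpacking of the paper's chain of inclusions, and your closing remark on why finiteness (rather than mere boundedness) of the defect set is needed is a helpful addition.
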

\begin{proof} Since $0 \in P_o$ and $\varphi_o(e) = 0$ we have $(e, 0) \in P_o(\varphi_o)$, and since $P_o \subset \bR$ is symmetric and $\varphi_o$ is anti-symmetric, the subset $P_o(\varphi_o) \subset \Gamma \times \bR$ is symmetric. If we denote by $\mathscr{D}(\varphi_o)$ the finite defect set of $\varphi_o$ and choose $F \subset \bR$ finite such that $P_o + P_o \subset P_o + F$, then
\begin{align*}
P_o(\varphi_o) P_o(\varphi_o)
&\subseteq \{ (\gamma_1 \gamma_2,t_1+t_2) \mid t_1 + t_2 + 
\varphi_o(\gamma_1) + \varphi_o(\gamma_2) \in P_o + P_o \} \\[0.2cm]
&\subseteq \{ (\gamma_1 \gamma_2,t_1 + t_2) \mid t_1 + t_2 + \varphi(\gamma_1 \gamma_2) \in P_o + F + \mathscr{D}(\varphi_o) \} \\[0.2cm]
&=
P_o(\varphi_o)(\{e\} \times  (F+\mathscr{D}(\varphi_o))),
\end{align*}
and thus $P_o(\varphi_o)$ is an approximate subgroup of $\Gamma \times \bR$.
\end{proof}
\begin{remark}[Twisted Delone sets]\label{DeloneSets}
If $G$ is a locally compact group, then a subset $\Lambda \subset G$ is called \emph{$Q$-relatively dense} for some compact set $Q \subset G$, if $\Lambda Q = G$ and \emph{$U$-uniformly discrete} for some identity neighbourhood $U$ if $\Lambda\Lambda^{-1} \cap U = \{e\}$. It is called a \emph{Delone set} if it is $Q$-relatively dense and $U$-uniformly discrete for some compact set $Q \subset G$ and some identity neighbourhood $U$. An approximate subgroup of $G$, which is also a Delone set is called a \emph{uniform approximate lattice}.

One readily checks that if $P_o \subset \bR$ is $U$-uniformly discrete or $Q$-relatively dense, then for every normalized quasimorphism $\phi_o$ the twisted set $P_o(\phi_o)$ is $\{e\} \times U$-uniformly discrete and $\{e\} \times Q$-relatively dense in $\Gamma \times \bR$. In particular, twists of Delone sets in $\bR$ are Delone sets in $\Gamma \times \bR$. If $\Gamma$ is a uniform lattice in some locally compact group $G$, they are thus also Delone sets in $G \times \bR$. 
\end{remark}
Since $\bZ$-valued quasimorphisms have a finite defect set, Proposition \ref{ApproxGrp} and Remark \ref{DeloneSets} imply the following corollary.
\begin{corollary}\label{GetUAL}
If $P_o$ is a uniform approximate lattice in $\bR$, $\Gamma$ is a uniform lattice in a locally compact group $G$ and $\varphi_o: \Gamma \to \bZ$ is an antisymmetric quasimorphism,
then $P_o(\varphi_o)$ is a uniform approximate lattice in $G \times \bR$.
\end{corollary}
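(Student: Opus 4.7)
The plan is to show that Corollary \ref{GetUAL} is essentially a bookkeeping exercise that combines Proposition \ref{ApproxGrp} with Remark \ref{DeloneSets}, once one checks that the approximate subgroup and Delone properties established there transfer from $\Gamma \times \bR$ to $G \times \bR$.

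First I would verify the hypotheses of Proposition \ref{ApproxGrp}. Since $\varphi_o$ is assumed to be integer-valued and a quasimorphism, its defect set $\mathscr{D}(\varphi_o) \subset \bZ$ is bounded, hence finite. Uniform approximate lattices in $\bR$ are in particular approximate subgroups, so we are indeed in the setting of Proposition \ref{ApproxGrp}, which gives us a finite subset $F_1 \subset \Gamma \times \bR$ with $P_o(\varphi_o) \cdot P_o(\varphi_o) \subset P_o(\varphi_o) \cdot F_1$. Since $F_1$ sits inside $G \times \bR$ via the inclusion $\Gamma \hookrightarrow G$, and since symmetry and containment of the identity do not depend on the ambient group, the set $P_o(\varphi_o)$ is also an approximate subgroup of $G \times \bR$.

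Next I would verify the Delone property in $G \times \bR$. Since $P_o$ is a uniform approximate lattice in $\bR$, we may choose an identity neighbourhood $U_{\bR} \subset \bR$ with $P_o$ being $U_{\bR}$-uniformly discrete, and a compact set $Q_{\bR} \subset \bR$ with $P_o + Q_{\bR} = \bR$. Since $\Gamma$ is a uniform lattice in $G$, we similarly pick an identity neighbourhood $U_G \subset G$ with $\Gamma \cap U_G = \{e\}$ and a compact set $Q_G \subset G$ with $\Gamma Q_G = G$. For uniform discreteness, if $(\gamma_1, t_1), (\gamma_2, t_2) \in P_o(\varphi_o)$ and $(\gamma_1, t_1)^{-1}(\gamma_2, t_2) \in U_G \times U_{\bR}$, then $\gamma_1^{-1}\gamma_2 \in \Gamma \cap U_G = \{e\}$, so $\gamma_1 = \gamma_2$, and $t_2 - t_1 = (\varphi_o(\gamma_1) + t_2) - (\varphi_o(\gamma_2) + t_1) \in (P_o - P_o) \cap U_{\bR} = \{0\}$. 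For relative density, given $(g, t) \in G \times \bR$, decompose $g = \gamma k$ with $\gamma \in \Gamma$, $k \in Q_G$; then $\varphi_o(\gamma) + t \in P_o + Q_{\bR}$, so there exist $s \in \bR$ with $\varphi_o(\gamma) + s \in P_o$ and $u := t - s \in Q_{\bR}$. Thus $(\gamma, s) \in P_o(\varphi_o)$ and $(\gamma, s)(k, u) = (g, t)$, so $P_o(\varphi_o) \cdot (Q_G \times Q_{\bR}) = G \times \bR$.

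I do not anticipate any significant obstacle: the only subtlety is making sure the factor structure of the twist respects the product structure of the ambient group, and that the uniform lattice hypothesis on $\Gamma \subset G$ upgrades discreteness and cocompactness from $\Gamma$ to $G$ in the $\Gamma$-coordinate. Both these observations are explicitly flagged in Remark \ref{DeloneSets}, so the proof essentially amounts to a two-line citation of Proposition \ref{ApproxGrp} and Remark \ref{DeloneSets}, possibly spelled out as above for the reader's convenience.
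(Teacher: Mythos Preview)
Your proposal is correct and follows exactly the approach the paper takes: the paper derives the corollary in one line by citing Proposition \ref{ApproxGrp} and Remark \ref{DeloneSets} together with the observation that a $\bZ$-valued quasimorphism has finite defect set. Your write-up simply unpacks those two citations explicitly, which is fine and matches the intended argument.
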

For example, this construction allows us to construct uniform approximate lattices in $\mathrm{SL_2}(\R) \times \R$ by starting from a counting quasimorphism $\varphi_o: \Gamma_g \to \bZ$ on a surface group $\Gamma_g$. 

The remainder of this section is devoted to the study of hulls of twisted sets. We will now realize these as twisted skew products over hulls of quasimorphisms.

\subsection{Hulls of twisted sets as skew products}
Given a locally compact second countable group $G$ we denote by $\mathscr C(G)$ the space of closed subsets of $G$ equipped with the Chabauty-Fell topology and $G$-action given by $g.P := Pg^{-1}$. Then $\mathscr C(G)$ is a compact metrizable space, and a sequence $P_n$ in $\mathscr C(G)$ converges to some $P \in \mathscr C(G)$ if and only if the following two conditions hold:
\begin{enumerate}[(CF1)]
\item If $p_{n_k} \in P_{n_k}$ for some subsequence $(n_k)$ and $p_{n_k} \to p$ in $G$, then $p \in P$.
\item For every $p \in P$, there exist elements $p_n \in P_n$ such that $p_n \ra p$ in $G$.
\end{enumerate}
Given $P_o \in \mathscr{C}(G)$ we denote by $\Omega_{P_o}$ the orbit closure of $P_o$ in $\mathscr C(G)$ and refer to $\Omega_{P_o}$ as the \emph{hull} of $P_o$. 
We recall that if $\Delta$ is a compact space with a $\Gamma$-action by homeomorphisms, then a continuous map $c: \Gamma \times \Delta \to \bR$ is called a \emph{normalized cocycle} if $c(\gamma_1\gamma_2, y)  = c(\gamma_1,\gamma_2.y) + c(\gamma_2,y) \text{ and } c(e, y) = 0 \text{ for all } y \in\Delta \text{ and }\gamma_1, \gamma_2 \in \Gamma$.
\begin{definition} If $c: \Gamma \times \Delta \to \bR$ be a normalized  continuous cocycle and $Z$ is a compact $\R$-space, then the  \emph{$c$-twisted action} of $\Gamma \times \R$ on $\Delta \times Z$ is the continuous action given by
\[
(\gamma,t).(y,z) := (\gamma .y, (c(\gamma,y)+t).z) \quad (\gamma \in \Gamma, t \in \R, y \in \Delta, z \in Z).
\]
The associated $(\Gamma\times \R)$-space is denoted $\Delta \times_c Z$ and referred to as the \emph{skew product} of $\Delta$ and $Z$ over $c$.
\end{definition}
\begin{remark}\label{BaseFactor}
Note that the projection $\pi_1: \Delta \times_c Z \to \Delta$ onto the first factor is a $\Gamma$-equivariant continuous map. In particular, $\Delta$ is a continuous $\Gamma$-factor of the skew product $\Delta \times_c Z$. 
\end{remark}
We now investigate conditions on a set $\widetilde{P} \subset \Gamma \times \bR$ which guarantee that the hull of $\widetilde{P}$ splits as a skew product. Our starting point is the following observation:
\begin{proposition}\label{FactorMapCts} Let $c: \Gamma \times \Delta \to \bR$ be a continuous cocycle and let $P_o \subset \bR$ be closed. Then 
there is a  $(\Gamma \times \R)$-equivariant continuous map
\[\pi: \Delta \times_c \Omega_{P_o} \to \mathscr C(\Gamma \times \R), \; (y, P) \mapsto \left\{ (\gamma, t) \in \Gamma \times \bR \mid c(\gamma,y) + t \in P \right\}.\]
\end{proposition}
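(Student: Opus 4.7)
\medskip
\noindent\textbf{Proof proposal.} The plan is to verify in turn that (a) the formula for $\pi$ lands in $\mathscr{C}(\Gamma\times\bR)$, (b) $\pi$ is $(\Gamma\times\bR)$-equivariant, and (c) $\pi$ is continuous. For (a), fix $(y,P)$ and observe that
\[
\pi(y,P) \;=\; \bigsqcup_{\gamma \in \Gamma} \{\gamma\} \times \bigl(P - c(\gamma,y)\bigr).
\]
Each fiber $P - c(\gamma,y)$ is a closed subset of $\bR$ because $P$ is closed, and since $\Gamma$ carries the discrete topology, the disjoint union is closed in $\Gamma \times \bR$. So $\pi$ takes values in $\mathscr{C}(\Gamma\times\bR)$.

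For (b), I would compute both sides of the desired equivariance relation directly from the definitions of the twisted action on $\Delta \times_c \Omega_{P_o}$ and of the Chabauty action $g.Q = Qg^{-1}$ on $\mathscr{C}(\Gamma\times\bR)$. Writing $(\gamma_0,t_0).(y,P) = (\gamma_0.y,\, P - c(\gamma_0,y) - t_0)$ and unwrapping, the left-hand side becomes
\[
\pi\bigl((\gamma_0,t_0).(y,P)\bigr) \;=\; \{(\gamma,t) \mid c(\gamma,\gamma_0.y) + t + c(\gamma_0,y) + t_0 \in P\},
\]
and by the cocycle identity $c(\gamma,\gamma_0.y) + c(\gamma_0,y) = c(\gamma\gamma_0,y)$ this simplifies to $\{(\gamma,t) \mid c(\gamma\gamma_0,y) + t + t_0 \in P\}$. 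On the other hand the substitution $(\gamma',t') = (\gamma\gamma_0, t + t_0)$ shows that $(\gamma_0,t_0).\pi(y,P) = \pi(y,P)(\gamma_0^{-1},-t_0)$ equals the same set. Thus $\pi$ is equivariant.

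For (c), which is the most substantial step but still routine, I would take a convergent sequence $(y_n, P_n) \to (y,P)$ and verify the two Chabauty--Fell conditions (CF1) and (CF2) for $\pi(y_n,P_n) \to \pi(y,P)$. For (CF1), suppose $(\gamma_{n_k}, t_{n_k}) \in \pi(y_{n_k}, P_{n_k})$ with $(\gamma_{n_k}, t_{n_k}) \to (\gamma, t)$; discreteness of $\Gamma$ gives $\gamma_{n_k} = \gamma$ eventually, continuity of $c$ in the second variable gives $c(\gamma, y_{n_k}) + t_{n_k} \to c(\gamma, y) + t$, and since $c(\gamma,y_{n_k}) + t_{n_k} \in P_{n_k}$, applying (CF1) for $P_n \to P$ in $\mathscr{C}(\bR)$ yields $c(\gamma,y) + t \in P$, so $(\gamma,t) \in \pi(y,P)$. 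For (CF2), given $(\gamma,t) \in \pi(y,P)$, i.e.\ $c(\gamma,y)+t \in P$, apply (CF2) for $P_n \to P$ to obtain $s_n \in P_n$ with $s_n \to c(\gamma,y)+t$, and set $t_n := s_n - c(\gamma, y_n)$; then $(\gamma, t_n) \in \pi(y_n,P_n)$ by construction and $t_n \to t$ by continuity of $c$. No step is really a serious obstacle here; the equivariance computation is the one most prone to sign or inverse errors, and the continuity argument relies only on the Chabauty--Fell criteria together with the discreteness of $\Gamma$, which makes the $\Gamma$-coordinate trivial to handle.
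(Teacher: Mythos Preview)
Your proof is correct and follows essentially the same approach as the paper: verify equivariance by a direct cocycle computation and continuity via the Chabauty--Fell criteria (CF1) and (CF2). Your continuity argument is in fact slightly more streamlined than the paper's, which first extracts a convergent subsequence by compactness and then identifies the limit, whereas you verify (CF1) and (CF2) directly for the full sequence; you also make explicit the easy fact that $\pi(y,P)$ is closed, which the paper leaves implicit.
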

\begin{proof} %For all $(h,a) , (h', a') \in H \times A$ 
%and $(y,P) \in Y \times_c \Omega_{P_o}$ we have
%\begin{align*}
%& \; (h', a') \in \pi((h,a).(y,P)) = \pi(h.y, P -a - c(h,y)) \\
%\iff & \; c(h'h.y) + c(h,y) + a' + a \in P\\
%\iff & \; c(h'h, y) + (a' + a) \in P\\
%\iff & \; (h'h, a'+a) \in \pi(y, P)\\
%\iff & \;  (h',a') \in \pi(y, P)(h,a)^{-1} = (h,a).\pi(y, P),
%\end{align*}
%hence $ \pi((h,a).(y,P)) = (h,a).\pi(y, P)$, proving equivariance. 
Equivariance is straight-forward to check. To prove continuity, let $(y_n,P_n)$ be a sequence in $\Delta \times_c \Omega_{P_o}$ which converges to some $(y,P)$ in $\Delta \times_c \Omega_{P_o}$. Since $\mathscr{C}(\Gamma \times \bR)$ is compact, we can find $\widetilde{P} \in \mathscr{C}(\Gamma \times \bR)$ and a sub-sequence $(n_k)$ such that
\[
\pi(y_{n_k},P_{n_k}) \ra \widetilde{P} \quad \textrm{in $\mathscr{C}(\Gamma \times \R)$}.
\]
To prove continuity we need to show that $\widetilde{P} = \pi(y,P)$. To do this, pick $(\gamma, t) \in \widetilde{P}$. Then, by (CF2) we can find $(\gamma_{k},t_k) \in \pi(y_{n_k},P_{n_k})$ such
that $(\gamma_k,t_k) \ra (\gamma, t)$. Note that by definition
\[
p_{n_k} := c(\gamma_k,y_{n_k}) + t_k \in P_{n_k}, \quad \textrm{for all $k$}.
\]
Since $c$ is continuous, we conclude by (CF1) that $c(\gamma, t) + a \in P$, and thus $(\gamma, t) \in \pi(y,P)$. Since $(\gamma, t) \in \widetilde{P}$ was chosen arbitrarily, this shows $\widetilde{P} \subseteq \pi(y,P)$. 

\item Conversely, suppose $(\gamma,t) \in \pi(y,P)$ and define 
\[(y', P') := (\gamma,t).(y,P) \qand (y_n',P_n') := (\gamma,t).(y_n,P_n). %\qand \widetilde{P}' := (h,a).\widetilde{P}.
\] 
By equivariance of $\pi$ we then have $(e, 0) \in \pi(y', P')$ and hence $0 = c(e, y') + 0 \in P'$.
Moreover, since $P_n \to P$ we have $P_n' \to P'$, and hence by (CF2) we find a sequence $t_n' \in P_n'$ such that $t_n' \to 0$. Then for all $n \in \bN$ we have, by equivariance of $\pi$,
\begin{align*}
c(e, y_{n}') + t_{n}' = t_{n}' \in P_{n}' &\; \implies (e, t_{n}') \in \pi(y_n', P_{n}') = \pi(y_n, P_n)(\gamma, t)^{-1}\\
&\; \implies (\gamma, t+t_n') \in \pi(y_n, P_n).
\end{align*}
Since $t_n' \to 0$ it then follows from (CF1), that
\[
(\gamma, t) = \lim_{k \to \infty} (\gamma, t+t_{n_k}') \in \lim_{k \to \infty} \pi(y_{n_k}, P_{n_k}) =  \widetilde{P}.
\]
Since this holds for all $(\gamma,t) \in \pi(y,P)$ we have $ \pi(y,P) \subset \widetilde{P}$ and hence $ \pi(y,P) = \widetilde{P}$.
\end{proof}
We will need the following property of the map $\pi$.
\begin{lemma}\label{Fiberspi} If the $\R$-action on $\Omega_{P_o}$ is free, then for all $(y,P) \in \Delta \times \Omega_{P_o}$ we have
\[
\pi^{-1}(\pi(y,P)) = \{ (y',P) \in \Delta \times \Omega_{P_o} \mid c(\gamma,y) = c(\gamma,y') \enskip \textrm{for all $\gamma \in \Gamma$} \}.
\]
\end{lemma}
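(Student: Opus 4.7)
The inclusion ``$\supseteq$'' is immediate: if $P' = P$ and $c(\gamma, y) = c(\gamma, y')$ for every $\gamma \in \Gamma$, then $c(\gamma, y') + t \in P'$ if and only if $c(\gamma, y) + t \in P$, so $\pi(y', P') = \pi(y, P)$.

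For the nontrivial inclusion ``$\subseteq$'', suppose $(y', P') \in \Delta \times \Omega_{P_o}$ satisfies $\pi(y', P') = \pi(y, P)$. The plan is to first extract $P' = P$ from the $\gamma = e$ slice, and then read off the equality $c(\gamma, y) = c(\gamma, y')$ from the remaining slices using freeness of the $\R$-action. Since $c$ is a normalized cocycle, $c(e, y) = c(e, y') = 0$, so
\[
\{t \in \bR \mid t \in P\} = \{t \in \bR \mid (e,t) \in \pi(y,P)\} = \{t \in \bR \mid (e,t) \in \pi(y',P')\} = \{t \in \bR \mid t \in P'\},
\]
which yields $P = P'$.

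Now fix any $\gamma \in \Gamma$ and set $s := c(\gamma, y') - c(\gamma, y)$. For every $t \in \bR$, the equivalence
\[
c(\gamma, y) + t \in P \iff (\gamma, t) \in \pi(y,P) = \pi(y', P) \iff c(\gamma, y') + t \in P
\]
shows that $P - s = P$ as subsets of $\bR$. In the convention $r.Q = Q - r$ for the $\R$-action on $\mathscr{C}(\bR)$, this says that $s$ stabilizes $P \in \Omega_{P_o}$. By the freeness hypothesis, $s = 0$, i.e.\ $c(\gamma, y) = c(\gamma, y')$. Since $\gamma \in \Gamma$ was arbitrary, this completes the proof.

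The argument is essentially bookkeeping; the only point that requires care is keeping the cocycle normalization $c(e,\cdot) = 0$ and the sign convention of the $\R$-action straight, which I expect to be the sole potential pitfall.
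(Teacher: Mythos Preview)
Your proof is correct and follows essentially the same approach as the paper: both arguments unwrap the definition of $\pi$ to obtain $P - c(\gamma,y) = P' - c(\gamma,y')$ for all $\gamma$, specialize to $\gamma = e$ (using $c(e,\cdot)=0$) to deduce $P = P'$, and then invoke freeness of the $\R$-action on $\Omega_{P_o}$ to conclude $c(\gamma,y) = c(\gamma,y')$ for every $\gamma$. Your write-up is slightly more explicit about the two inclusions, but there is no substantive difference.
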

\begin{proof} For $(y,P), (y', P') \in \Delta \times \Omega_{P_o}$ we have the equivalences
\begin{align*}
\pi(y,P) = \pi(y',P') &\iff  (c(\gamma,y) + a \in P \iff c(\gamma, y') + a \in P') \text{ for all $\gamma \in \Gamma, a \in A$}\\
&\iff P - c(h,y) = P' - c(h,y') \textrm{ for all $\gamma \in \Gamma$}.
\end{align*}
If we plug in $h=e$ into the latter statement, then we see that these statements imply that $P = P'$. Since the action $A$-action is free we deduce that
\begin{align*}
\pi(y,P) = \pi(y',P') & \Leftrightarrow \; (P = P') \text{ and } (P = P - (c(\gamma,y')-c(\gamma,y)) \text{ for all $\gamma \in \Gamma$})\\
& \Leftrightarrow \; (P = P') \text{ and } (c(h,y) = c(h,y') \text{ for all $\gamma\in \Gamma$}). \qedhere
\end{align*}
\end{proof}
This allows us to show that the hulls of certain subsets of $\Gamma \times \R$ are skew products:
\begin{corollary}\label{Critb} 
Let $\Delta$ be a compact metrizable space on which a countable group $\Gamma$ acts by homeomorphisms, let $c: \Gamma \times \Delta \to \bR$ be a normalized continuous cocycle and let $P_o \subset \bR$ be a closed subset. Assume that
\begin{enumerate}[(1)]
\item the $\R$-action on $\Omega_{P_o}$ is free;
\item $c$ separates points in $\Delta$, i.e.\ if for all $y_1, y_2 \in \Delta$ there exists $\gamma \in \Gamma$ such that $c(\gamma ,y_1) \neq c(\gamma, y_2)$.
\item some $y_o \in \Delta$ has a dense $\Gamma$-orbit.
\end{enumerate}
Then for any $y_o$ as in (3) the map $\pi$ from Proposition \ref{FactorMapCts} restricts to a $(\Gamma \times \R)$-equivariant homeomorphism $\Delta \times_c \Omega_{P_o} \to  \Omega_{\pi(y_o,P_o)}$. In particular, $\Delta$ is a continuous factor of $\Omega_{\pi(y_o,P_o)}$ and if $\mathrm{Prob}_\Gamma(\Delta) = \emptyset$, then $\mathrm{Prob}_{\Gamma \times \R}(\Omega_{\pi(y_o,P_o)}) = \emptyset$.
\end{corollary}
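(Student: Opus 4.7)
The plan is to verify in sequence that (a) the map $\pi$ of Proposition \ref{FactorMapCts} restricted to $\Delta \times_c \Omega_{P_o}$ is injective, (b) the compact image is exactly $\Omega_{\pi(y_o,P_o)}$, and then (c) deduce the two consequences about factors and invariant measures.

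For injectivity, I would invoke Lemma \ref{Fiberspi}: since condition (1) gives freeness of the $\bR$-action on $\Omega_{P_o}$, the fiber of $\pi$ over $\pi(y,P)$ is $\{(y',P) \mid c(\gamma,y)=c(\gamma,y') \text{ for all } \gamma \in \Gamma\}$. Condition (2) says $c$ separates points of $\Delta$, so this fiber reduces to $\{(y,P)\}$, and hence $\pi$ is injective. Since $\Delta$ is compact and $\Omega_{P_o}$ is compact, $\Delta \times_c \Omega_{P_o}$ is a compact Hausdorff space, so the continuous injection $\pi$ is automatically a homeomorphism onto its image.

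The next step is to identify this image with $\Omega_{\pi(y_o,P_o)}$. I would first check that $(y_o,P_o)$ has dense $(\Gamma \times \bR)$-orbit in $\Delta \times_c \Omega_{P_o}$: given any $(y,P)$, condition (3) supplies $\gamma_n \in \Gamma$ with $\gamma_n.y_o \to y$, and density of the $\bR$-orbit of $P_o$ in $\Omega_{P_o}$ supplies $s_n \in \bR$ with $s_n.P_o \to P$; setting $t_n := s_n - c(\gamma_n,y_o)$, the twisted action gives $(\gamma_n,t_n).(y_o,P_o) = (\gamma_n.y_o, s_n.P_o) \to (y,P)$. Since $\pi$ is continuous and $(\Gamma \times \bR)$-equivariant, the image $\pi(\Delta \times_c \Omega_{P_o})$ is compact, $(\Gamma \times \bR)$-invariant, and contains $\pi(y_o,P_o)$, hence contains $\Omega_{\pi(y_o,P_o)}$; conversely the image, being the continuous image of the orbit closure of $(y_o,P_o)$, is contained in the orbit closure of $\pi(y_o,P_o)$. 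This gives the homeomorphism onto $\Omega_{\pi(y_o,P_o)}$.

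Finally, composing this homeomorphism with the projection $\pi_1 : \Delta \times_c \Omega_{P_o} \to \Delta$ (Remark \ref{BaseFactor}) exhibits $\Delta$ as a continuous $\Gamma$-factor of $\Omega_{\pi(y_o,P_o)}$. Consequently, any $\mu \in \mathrm{Prob}_{\Gamma \times \bR}(\Omega_{\pi(y_o,P_o)})$ is in particular $\Gamma$-invariant, and its pushforward under this factor map lies in $\mathrm{Prob}_\Gamma(\Delta)$; so emptiness of the latter forces emptiness of the former. I don't anticipate a serious obstacle here: injectivity is immediate from Lemma \ref{Fiberspi} together with (1)-(2), and the only mildly delicate point is producing the dense orbit, which is handled by the independent choice of $\gamma_n$ and $t_n$ afforded by the twisted action.
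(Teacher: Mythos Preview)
Your proposal is correct and follows essentially the same approach as the paper: injectivity via Lemma \ref{Fiberspi} together with conditions (1)--(2), the homeomorphism-onto-image conclusion from compactness, identification of the image via density of the orbit of $(y_o,P_o)$, and the final consequences via Remark \ref{BaseFactor} (the paper packages the last step as Lemma \ref{NoInvMeasure}). Your explicit verification that $(y_o,P_o)$ has dense $(\Gamma\times\bR)$-orbit, by decoupling the $\Gamma$- and $\bR$-components through the choice $t_n = s_n - c(\gamma_n,y_o)$, fills in a step the paper leaves implicit.
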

\begin{proof} By Lemma \ref{Fiberspi} and since $c$ separate points the non-empty fibers of $\pi$ are singletons; since $Y \times_c \Omega_{P_o}$ is compact and $\pi$ is continuous, this implies that $\pi$ is a homeomorphism onto its image. Since $(y, P_o)$ is dense in $Y \times_c \Omega_{P_o}$, then $\pi(y_o, P_o)$ is dense in this image by equivariance and continuity of $\pi$. This proves the first statement, and the other two statements readily follows from Remark \ref{BaseFactor} and Lemma \ref{NoInvMeasure}.
\end{proof}
Corollary \ref{Critb} applies in particular to hulls of certain twisted sets. We will work in the following setting:
\begin{setting}\label{MainSetting} Throughout this subsection we use the following notation:
\begin{enumerate}[(1)]
\item $\Gamma$ is a countable discrete group;
\item $\varphi_o: \Gamma \to \bR^d$ is a quasimorphism with hull $\Delta_{\varphi_o}$ denotes its hull;
\item $c: \Gamma \times \Delta_{\varphi_o} \to \bR$ denotes the canonical cocycle given by $c(\gamma, \varphi) = \varphi(\gamma)$;
\item $P_o \subset \bR$ is a closed subset such that the $\bR$-action on $\Omega_{P_o}$ is free;
\item  $\widetilde{p}$ is a probability measure on $\Gamma \times \R$ which is absolutely continuous with respect to the Haar measure on $\Gamma \times \R$ and whose support generates $\Gamma \times \R$ as a semigroup. We denote by $p$ the push-forward of $\widetilde{p}$ to $\Gamma$ and assume that $\varphi_o$ is $p$-integrable.
\end{enumerate}
\end{setting}
%In the language of aperiodic order, condition (3) says that $P_o$ is strongly aperiodic and admits uniform patch frequencies. It is easy to construct such sets, e.g.\ using the cut-and-project construction of Meyer (see the examples below). 
\begin{proposition}[Skew product realization]\label{SkewRealization} In the situation of Setting \ref{MainSetting}, consider the twisted set $P_o(\varphi_o) \subset \Gamma \times \bR$. 
Then the map
\begin{equation}\label{ThePi}\pi_{P_o, \varphi_o}: \Delta_{\varphi_o} \times_c \Omega_{P_o} \to \Omega_{P_o(\varphi_o)}, \quad (\varphi, P) \mapsto \left\{ (\gamma, t) \in \Gamma \times \bR \mid \varphi(\gamma) + t \in P \right\}\end{equation}
is a well-defined $(\Gamma \times \bR)$-equivariant homeomorphism.
\end{proposition}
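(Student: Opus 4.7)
The plan is to deduce this proposition directly from Corollary \ref{Critb}, taking the compact $\Gamma$-space to be $\Delta = \Delta_{\varphi_o}$, the cocycle to be the canonical evaluation cocycle $c : \Gamma \times \Delta_{\varphi_o} \to \bR$, $c(\gamma,\varphi) = \varphi(\gamma)$ from \eqref{CanonicalCocycle}, and the distinguished point to be $y_o = \varphi_o$. The map in \eqref{ThePi} is then exactly the restriction to $\Delta_{\varphi_o} \times_c \Omega_{P_o}$ of the map $\pi$ constructed in Proposition \ref{FactorMapCts}, so continuity, $(\Gamma \times \R)$-equivariance and the fact that it takes values in $\mathscr{C}(\Gamma \times \R)$ come for free.

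I then verify the three hypotheses of Corollary \ref{Critb}. Hypothesis (1), freeness of the $\bR$-action on $\Omega_{P_o}$, is part of Setting \ref{MainSetting}. Hypothesis (2) is essentially tautological: since elements of $\Delta_{\varphi_o}$ are genuine functions $\Gamma \to \bR$ and $c(\gamma,\varphi) = \varphi(\gamma)$, the equality $c(\gamma,\varphi_1) = c(\gamma,\varphi_2)$ for all $\gamma \in \Gamma$ means $\varphi_1 \equiv \varphi_2$. Hypothesis (3) holds because, by the very definition of the hull, $\Delta_{\varphi_o} = \overline{\Gamma.\varphi_o}$, so $\varphi_o$ has dense $\Gamma$-orbit in $\Delta_{\varphi_o}$.

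Finally, I have to identify the target of the homeomorphism produced by Corollary \ref{Critb} with $\Omega_{P_o(\varphi_o)}$. This comes down to computing
\[
\pi(\varphi_o, P_o) = \{(\gamma, t) \in \Gamma \times \bR \mid c(\gamma,\varphi_o) + t \in P_o\} = \{(\gamma, t) \in \Gamma \times \bR \mid \varphi_o(\gamma) + t \in P_o\} = P_o(\varphi_o),
\]
so that $\Omega_{\pi(\varphi_o, P_o)} = \Omega_{P_o(\varphi_o)}$. Corollary \ref{Critb} then yields a $(\Gamma \times \bR)$-equivariant homeomorphism $\Delta_{\varphi_o} \times_c \Omega_{P_o} \to \Omega_{P_o(\varphi_o)}$, which is exactly $\pi_{P_o,\varphi_o}$. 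There is no real obstacle here; the only delicate point is the verification that the fibers of $\pi$ are singletons when $c$ separates points, but this is precisely the content of Lemma \ref{Fiberspi} together with the elementary separation argument above, both of which have already been established. The probability measure $\widetilde{p}$ plays no role in the present statement — it is only listed in Setting \ref{MainSetting} for use in subsequent sections.
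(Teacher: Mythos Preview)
Your proposal is correct and follows essentially the same approach as the paper: both proofs apply Corollary \ref{Critb} with $\Delta = \Delta_{\varphi_o}$, the evaluation cocycle $c(\gamma,\varphi)=\varphi(\gamma)$, and $y_o=\varphi_o$, verify the three hypotheses in the same way, and identify $\pi(\varphi_o,P_o)=P_o(\varphi_o)$. Your additional remarks about $\widetilde{p}$ being irrelevant here and about Lemma \ref{Fiberspi} underlying the injectivity are accurate but not needed beyond what Corollary \ref{Critb} already packages.
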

\begin{proof} By \eqref{CanonicalCocycle} the map
\[
c: \Gamma \to \Delta_{\phi_o} \to \R, \quad (\gamma, \varphi) \mapsto \varphi(\gamma).
\]
is a continuous normalized cocycle. This cocycle separates points, since for all $\varphi, \varphi' \in \Delta_{\varphi_o}$ we have
\[
\varphi(\gamma) = c(\gamma,\varphi) = c(\gamma,\varphi') = \varphi'(\gamma) \enskip \textrm{for all $\gamma \in {\Gamma}$} \implies \varphi = \varphi'.
\]
Since $y_o := \varphi_o$ has a dense orbit in $\Delta_{\varphi_o}$, we can thus apply Corollary \ref{Critb} with $y := \varphi_o$. The proposition then follows from the fact 
\begin{align*}
 \pi(\varphi_o, P_o) &= \{(\gamma,t) \in \Gamma \times \R \mid c(\gamma, \varphi_o)+t\in P_o \}\\
 &= \{(\gamma,t) \in \Gamma \times \R  \mid \varphi_o(\gamma) + t \in P_o\}  =  P_o(\varphi_o).\qedhere
\end{align*}
\end{proof}

\subsection{Stationary and invariant measures on skew products}
Let $c: \Gamma \times \Delta \to \bR$ be a normalized  continuous cocycle and let $Z$ be a compact $\R$-space. We investigate stationary and invariant measures on the skew-product $\Delta \times_c Z$. The following lemma is an immediate consequence of Remark \ref{BaseFactor}:
\begin{lemma}\label{NoInvMeasure} If $\mathrm{Prob}_\Gamma(\Delta) = \emptyset$, then $\mathrm{Prob}_{\Gamma \times \R}(\Delta \times_c Z) = \emptyset$.\qed
\end{lemma}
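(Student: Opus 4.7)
\begin{proof}[Proof proposal for Lemma \ref{NoInvMeasure}]
The plan is to argue by contrapositive: assuming that $\mathrm{Prob}_{\Gamma \times \R}(\Delta \times_c Z)$ is non-empty, we will produce an element of $\mathrm{Prob}_\Gamma(\Delta)$ by pushing forward along the projection to the first factor.

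First, suppose $\mu \in \mathrm{Prob}_{\Gamma \times \R}(\Delta \times_c Z)$. Restricting the invariance property from $\Gamma \times \R$ to the subgroup $\Gamma \times \{0\}$, we see that $\mu$ is in particular $\Gamma$-invariant under the restricted action $\gamma.(y,z) = (\gamma.y, c(\gamma,y).z)$.

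Next, let $\pi_1: \Delta \times_c Z \to \Delta$ be the projection onto the first factor. By Remark \ref{BaseFactor} this map is $\Gamma$-equivariant and continuous, so the pushforward $\nu := (\pi_1)_*\mu$ is a Borel probability measure on $\Delta$ and is $\Gamma$-invariant: for every Borel set $E \subset \Delta$ and every $\gamma \in \Gamma$,
\[
\nu(\gamma.E) = \mu(\pi_1^{-1}(\gamma.E)) = \mu(\gamma.\pi_1^{-1}(E)) = \mu(\pi_1^{-1}(E)) = \nu(E).
\]
Hence $\nu \in \mathrm{Prob}_\Gamma(\Delta)$, contradicting the assumption $\mathrm{Prob}_\Gamma(\Delta) = \emptyset$. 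This completes the proof.
\end{proof}

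This is as straightforward as the paper's remark ``immediate consequence'' suggests; there is no genuine obstacle. The only point worth flagging is the logical direction: we do not need the $\R$-action to contribute anything, since the contradiction is extracted from $\Gamma$-invariance alone, which is automatic from $(\Gamma\times\R)$-invariance by restriction to the subgroup $\Gamma \times \{0\}$. Everything else is just equivariance of $\pi_1$ plus the general fact that continuous equivariant maps push invariant measures to invariant measures.
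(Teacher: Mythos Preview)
Your proof is correct and is exactly the argument the paper has in mind: the lemma is stated with a \qedsymbol{} and the preceding sentence points to Remark \ref{BaseFactor}, i.e.\ the $\Gamma$-equivariance of $\pi_1$, which is precisely what you use to push forward a $(\Gamma\times\bR)$-invariant measure to a $\Gamma$-invariant one on $\Delta$.
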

If $Z$ admits an $\R$-invariant probability measure $\theta$, then we can relate stationary (in particular, invariant) measures on $\Delta$ on $\Delta \times_c Z$. To make this precise, we fix a symmetric probability measure $\widetilde{p}$ on $\Gamma \times \R$; we are going to assume that $\widetilde{p}$ is absolutely continuous with respect to the Haar measure on $\Gamma \times \R$ and that it supports generates $\Gamma \times \R$ as a semigroup. We then denote by $p \in \mathrm{Prob}(\Gamma)$ the pushforward of $\widetilde{p}$ to $\Gamma$. We then obtain a well-defined injection
\[
\sigma_\theta : \Prob_p(\Delta) \ra \Prob_{\widetilde{p}}(\Delta \times_c Z), \enskip \nu \mapsto \nu \otimes \theta
\]
\begin{lemma}\label{StationaryDown} If $Z$ is uniquely ergodic with $\mathrm{Prob}_\bR(Z) = \{\theta\}$, then $\sigma_\theta$ is onto, i.e.\ every $\widetilde{p}$-stationary probability measure on $\Delta \times_c Z$ is of the form $\nu \otimes \theta$ for some unique $\nu \in  \Prob_p(\Delta)$.
\end{lemma}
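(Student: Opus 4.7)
My plan is to project to $\Delta$, disintegrate, and use the unique ergodicity of $\bR \acts Z$ together with the centrality of $\bR$ in $\Gamma \times \bR$ to pin down the fiber measures. Given $\mu \in \Prob_{\widetilde{p}}(\Delta \times_c Z)$, set $\nu := (\pi_1)_*\mu$. Since $\bR$ acts trivially on $\Delta$ and $p = \pi_{\Gamma *}\widetilde{p}$, pushing the equation $\widetilde{p} \ast \mu = \mu$ forward through the $\Gamma$-equivariant map $\pi_1$ yields $p \ast \nu = \nu$, so $\nu \in \Prob_p(\Delta)$. This $\nu$ is forced on us, which already establishes injectivity of $\sigma_\theta$; it remains to show that $\mu = \nu \otimes \theta$.

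To that end, disintegrate $\mu = \int_\Delta \mu_y\,d\nu(y)$ with $\mu_y \in \Prob(Z)$, and observe that by unique ergodicity of $\bR \acts Z$ the equality $\mu_y = \theta$ (for $\nu$-a.e.\ $y$) reduces to $\bR$-invariance of $\mu_y$. Since the $\bR$-action on $\Delta \times_c Z$ preserves fibers, this amounts in turn to $\bR$-invariance of $\mu$ itself. Here centrality of $\bR$ in $\Gamma \times \bR$ is crucial: the $\bR$-action commutes with the whole $(\Gamma \times \bR)$-action, so each push-forward $(T_s)_*\mu$ is again $\widetilde{p}$-stationary and still projects to $\nu$, and hence so do the Ces\`aro averages $\overline{\mu}_S := \frac{1}{2S}\int_{-S}^S (T_s)_*\mu\,ds$. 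Uniform convergence of Birkhoff averages for the uniquely ergodic action $\bR \acts Z$ yields, for every $F \in C(\Delta \times Z)$, uniform convergence of $\frac{1}{2S}\int_{-S}^S F(y, s.z)\,ds$ to $\int_Z F(y, z')\,d\theta(z')$ in $(y, z)$, and so $\overline{\mu}_S \to \nu \otimes \theta$ weakly as $S \to \infty$. It would thus suffice to show $\overline{\mu}_S = \mu$ for every $S$, i.e.\ that $\mu$ is itself $\bR$-invariant.

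The main obstacle is exactly this final $\bR$-invariance, and my plan is to derive it from the absolute continuity of $\widetilde{p}$ together with the assumption that $\supp(\widetilde{p})$ generates $\Gamma \times \bR$. By symmetry of $\widetilde{p}$, the density of $\widetilde{p}^{\ast 2n}$ restricted to $\{e\}\times\bR$ is the autocorrelation $h_n(t) = \sum_\gamma \int f_n(\gamma, s)\,f_n(\gamma, s - t)\,ds$, which is continuous at $t = 0$ with $h_n(0) > 0$; thus the conditional law of the $\bR$-coordinate of a $\widetilde{p}^{\ast 2n}$-walk given return-to-$e$ in $\Gamma$ is an absolutely continuous symmetric probability on $\bR$ whose support grows to exhaust $\bR$ as $n \to \infty$. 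Since the return-to-$e$ event preserves each fiber of $\pi_1$ and acts on it by the $\bR$-flow $T_{T_n}$, decomposing the iterated stationarity $\widetilde{p}^{\ast 2n} \ast \mu = \mu$ along the $\Gamma$-coordinate and applying a Choquet--Deny-type argument for the abelian group $\bR$ should force the fiber measures $\mu_y$ to be genuinely $\bR$-invariant (rather than merely quasi-invariant). Unique ergodicity of $\bR \acts Z$ then promotes $\bR$-invariance to equality $\mu_y = \theta$, completing the proof.
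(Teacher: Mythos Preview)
Your overall architecture---project to $\Delta$ to obtain $\nu$, reduce to showing $\bR$-invariance of $\mu$, then conclude via unique ergodicity of $Z$---is correct and is exactly the paper's strategy. The gap is at the crucial step: establishing $\bR$-invariance of $\mu$.

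Your proposed mechanism does not work as written. Writing $\widetilde p^{\,*2n}=\sum_{\gamma} p^{*2n}(\gamma)\,\delta_\gamma\otimes\rho_{n,\gamma}$ with $\rho_{n,\gamma}\in\Prob(\bR)$, the stationarity equation becomes
\[
\sum_{\gamma\in\Gamma} p^{*2n}(\gamma)\int_{\bR}(\gamma,t)_*\mu\;d\rho_{n,\gamma}(t)\;=\;\mu.
\]
Only the $\gamma=e$ summand acts by the pure $\bR$-flow; every other term moves mass between $\pi_1$-fibers via the cocycle $c$, and there is no way to isolate the $\gamma=e$ contribution to obtain an equation of the form $\rho_{n,e}*\mu=\mu$ (or $\rho_{n,e}*\mu_y=\mu_y$). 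So there is no harmonicity equation on $\bR$ to feed into Choquet--Deny, and the phrase ``should force'' hides exactly the missing idea.

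The paper closes this gap differently. Rather than conditioning on return to $e$, it uses centrality of $\bR$ at the level of convolution: since $(\delta_e\otimes p_{\bR})*\widetilde p=\widetilde p*(\delta_e\otimes p_{\bR})$, a lemma from \cite{BHO} gives that any $\widetilde p$-stationary $\mu$ is automatically $(\delta_e\otimes p_{\bR})$-stationary. (A direct argument for ergodic $\mu$: admissibility of $\widetilde p$ makes $\mu$ quasi-invariant under $\Gamma\times\bR$, hence $(\delta_e\otimes p_\bR)*\mu\ll\mu$; but $(\delta_e\otimes p_\bR)*\mu$ is again $\widetilde p$-stationary, and distinct ergodic stationary measures are mutually singular, forcing $(\delta_e\otimes p_\bR)*\mu=\mu$.) Choquet--Deny on $\bR$, applied to the slice measures $\mu_B(\,\cdot\,)=\mu(B\times\,\cdot\,)$, then upgrades $p_\bR$-stationarity to $\bR$-invariance, and unique ergodicity finishes. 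Your Ces\`aro-averaging paragraph is correct but becomes unnecessary once $\bR$-invariance is in hand.
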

\begin{proof} Let $\mu$ be a $\widetilde{p}$-stationary probability measure on $\Delta \times_c Z$, and let $p_\bR$ denote the push-forward of $p$ to $\bR$. Then 
\[
(\delta_e \otimes p_\bR) * \widetilde{p} = \widetilde{p} * (\delta_e \otimes p_\bR).
\]
so by \cite[Corollary 3.2]{BHO}, $\mu$ is also $(\delta_e \otimes p_\bR)$-stationary. Fix a Borel set $B \subset Y$ and define the positive Borel measure $\mu_B$ on $Z$ by
\[
\mu_B(C) = \mu(B \times C), \quad C \in \mathscr{B}_Z.
\]
Since $\mu$ is $(\delta_e \otimes p_\bR)$-stationary, we conclude that $\mu_B$ is $p_\bR$-stationary, and thus $\bR$-invariant by the Choquet-Deny Theorem \cite[Theorem 3]{Blackwell}. Since $\Prob_\bR(Z) = \{\theta\}$, we conclude that there exists a non-negative constant $\nu(B)$
such that 
\[
\mu_B(C) = \mu(B \times C) = \nu(B) \theta(C).
\]
One readily checks that $B \mapsto \nu(B)$ is a Borel probability measure on $\Delta$, which is the push-forward of $\mu$ under the canonical projection $Y \times_c Z \ra Y$. 
Since $\mu$ is $\widetilde{p}$-stationary, $\nu$ is $p$-stationary.
\end{proof}

\subsection{Stationary and invariant measures on hulls of twisted sets}
Combining Proposition \ref{SkewRealization} with Proposition \ref{QMTrivialities}.(iii), Corollary \ref{LeftHarmonicCase}, Remark \ref{BaseFactor}, Lemma \ref{NoInvMeasure} and Lemma \ref{StationaryDown} we obtain:
\begin{theorem}\label{FinalThm} In the situation of Setting \ref{MainSetting} the following hold:
\begin{enumerate}[(i)]
\item If $\varphi_o$ is cohomologically non-trivial, then $\mathrm{Prob}_{\Gamma \times \R}(\Omega_{{P}_o(\varphi_o)}) = \emptyset$.
\item There is a continuous $\Gamma$-equivariant factor map \[\Omega_{{P}_o(\varphi_o)} \xrightarrow{\pi_{P_o, \phi_o}^{-1}}  \Delta_{\varphi_o} \times_c \Omega_{P_o} \to \Delta_{\varphi_o}.\]
\item If $\Omega_{P_o}$ is uniquely ergodic with $\R$-invariant probability measure $\theta$, then the map
\[
\pi_{P_o, \phi_o}^\theta: \mathrm{Prob}_p(\Delta_{\phi_o}) \to \mathrm{Prob}_{\widetilde{p}}(\Omega_{P_o(\phi_o)}), \quad \nu \mapsto (\pi_{P_o, \phi_o})_*(\nu \otimes \theta) \]
is a homeomorphism.
\item If $\Omega_{P_o}$ is uniquely ergodic and $\phi_o$ is left-$p$-harmonic, then $\Omega_{P_o(\varphi_o)}$ is uniquely $\widetilde{p}$-stationary and an isometric extension of a $p$-boundary (which is non-trivial if $\phi_o$ is cohomologically non-trivial).\qed
\end{enumerate}
\end{theorem}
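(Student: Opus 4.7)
The plan is to exploit the skew-product realization $\Omega_{P_o(\phi_o)} \cong \Delta_{\phi_o} \times_c \Omega_{P_o}$ established in Proposition \ref{SkewRealization} and transport the structural results on $\Delta_{\phi_o}$ proven in Sections \ref{SecQuasi}--\ref{SecCore} across this homeomorphism. All four parts of the theorem are essentially direct consequences of previously established facts.

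For (ii), I would compose the inverse homeomorphism $\pi_{P_o,\phi_o}^{-1}: \Omega_{P_o(\phi_o)} \to \Delta_{\phi_o} \times_c \Omega_{P_o}$ from Proposition \ref{SkewRealization} with the canonical continuous $\Gamma$-equivariant projection $\Delta_{\phi_o} \times_c \Omega_{P_o} \to \Delta_{\phi_o}$ furnished by Remark \ref{BaseFactor}. Part (i) is then immediate: cohomological non-triviality of $\phi_o$ yields $\mathrm{Prob}_\Gamma(\Delta_{\phi_o}) = \emptyset$ by Proposition \ref{QMTrivialities}.(iii), whence Lemma \ref{NoInvMeasure} implies $\mathrm{Prob}_{\Gamma \times \R}(\Delta_{\phi_o} \times_c \Omega_{P_o}) = \emptyset$, and the homeomorphism of Proposition \ref{SkewRealization} transfers this to $\Omega_{P_o(\phi_o)}$.

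For (iii), assume unique ergodicity of $\Omega_{P_o}$ with $\R$-invariant probability measure $\theta$. Lemma \ref{StationaryDown} produces a bijection $\sigma_\theta: \mathrm{Prob}_p(\Delta_{\phi_o}) \to \mathrm{Prob}_{\tilde p}(\Delta_{\phi_o} \times_c \Omega_{P_o})$, $\nu \mapsto \nu \otimes \theta$. I would then define $\pi^\theta_{P_o,\phi_o}$ as the composition of $\sigma_\theta$ with push-forward along the homeomorphism $\pi_{P_o,\phi_o}$. Bijectivity follows from bijectivity of both maps; the weak-$*$ continuity of $\nu \mapsto \nu \otimes \theta$ is standard (test against products of continuous functions and invoke dominated convergence), and push-forward along a continuous map between compact metrizable spaces is weak-$*$ continuous, so $\pi^\theta_{P_o,\phi_o}$ is a homeomorphism.

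For (iv), combine (iii) with Corollary \ref{LeftHarmonicCase}: since $\phi_o$ is left-$p$-harmonic and (by assumption on $\phi_o$ in Setting \ref{MainSetting}) cohomologically non-trivial, there is a unique $p$-stationary measure $\nu_{\phi_o}$ on $\Delta_{\phi_o}$, and $(\Delta_{\phi_o}, \nu_{\phi_o})$ is a non-trivial $(\Gamma, p)$-boundary. By (iii) the unique $\tilde p$-stationary measure on $\Omega_{P_o(\phi_o)}$ is the push-forward of $\nu_{\phi_o} \otimes \theta$. The projection $\Delta_{\phi_o} \times_c \Omega_{P_o} \to \Delta_{\phi_o}$ is visibly measure-preserving with constant fiber measure $\theta$, on which $\R$ acts by the measure-preserving translations defining the twist; this exhibits $\Omega_{P_o(\phi_o)}$ as an isometric (in particular, relatively measure-preserving) extension of the non-trivial $(\Gamma, p)$-boundary $(\Delta_{\phi_o}, \nu_{\phi_o})$. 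Since all four parts reduce mechanically to cited results, I do not anticipate a serious obstacle; the only point requiring mild care is verifying weak-$*$ bicontinuity of $\pi^\theta_{P_o,\phi_o}$ in (iii), which as noted reduces to standard facts about tensor products and push-forwards of measures.
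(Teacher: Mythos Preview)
Your proposal is correct and follows exactly the paper's approach, which simply records that the theorem is obtained by combining Proposition \ref{SkewRealization} with Proposition \ref{QMTrivialities}.(iii), Corollary \ref{LeftHarmonicCase}, Remark \ref{BaseFactor}, Lemma \ref{NoInvMeasure} and Lemma \ref{StationaryDown}. One small slip: Setting \ref{MainSetting} does \emph{not} assume cohomological non-triviality of $\phi_o$, so in (iv) the unique stationarity should be drawn from Theorem \ref{QuasiMain}.(i)--(ii) rather than Corollary \ref{LeftHarmonicCase}, with non-triviality of the boundary remaining conditional as in the theorem statement.
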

\begin{remark}[Classification of stationary measures]
If $\Omega_{P_o}$ is uniquely ergodic, but $\phi_o$ is not left-$p$-harmonic, then the classification of $\widetilde{p}$-stationary measures on $\Omega_{P_o(\varphi_o)}$ is more complicated. In this situation we can argue as follows:

Let $\mu$ be a $\widetilde{p}$-stationary measure on $\Omega_{P_o(\phi_o)}$;  by Theorem \ref{FinalThm} we have $\mu =  (\pi_{P_o, \phi_o})_*(\nu \otimes \theta)$ for some (unique) $\nu \in \mathrm{Prob}_p(\Delta_{\phi_o})$. Now let $\psi_o \in \mathcal L_p(\phi_o)$ and denote by $\nu'$ the unique $p$-stationary measure on $\Delta_{\psi_o}$. By Corollary \ref{CorollaryMain} there is a $\Gamma$-equivariant $\nu$-measurable map $q: (\Delta_{\phi_o}, \nu) \to (\Delta_{\psi_o}, \nu')$ (which coincides with a Borel map on a $\nu$-conull set) such that $q_\ast \nu = \nu'$. We now consider the set $P_o(\psi_o)$, which by Lemma \ref{BoundedDisplacement} is bounded displacement equivalent to $P_o(\phi_o)$.
If we now define $\mu' =  (\pi_{P_o, \psi_o})_*(\nu' \otimes \theta)$, then we obtain a commuting diagram of measurable (but not Borel) maps
\begin{equation}
\begin{xy}\xymatrix{
(\Omega_{P_o(\varphi_o)}, \mu) \ar[rr] \ar[d] && (\Omega_{P_o(\psi_o)}, \mu') \ar[d]\\
(\Delta_{\phi_o}, \nu) \ar[rr] && (\Delta_{\psi_o}, \nu')
}\end{xy}
\end{equation}
with the following properties:
\begin{enumerate}[(1)]
\item All maps are measure-preserving and all measures are stationary.
\item The vertical maps are continuous factor maps, in fact, isometric extensions.
\item The spaces on the right are uniquely stationary with unique stationary measures $\mu'$, respectively $\nu'$.
\item $(\Delta(\psi_o), \nu')$ is a $p$-boundary.
\end{enumerate}
In general, the spaces on the left are \emph{not} uniquely stationary. In fact, by Theorem \ref{ManyStationaryMeasures} we can always modify $\psi_o$ by adding a bounded function and achieve in this way that
\[
| \mathrm{Prob}^{\textrm{erg}}_{\widetilde{p}}(\Omega_{P_o(\varphi_o)})| = | \mathrm{Prob}^{\textrm{erg}}_p(\Delta_{\phi_o})| = 2^{\aleph_o}.
\]
\end{remark}
\begin{example} Let $\alpha$ be irrational and consider the subset
\[
P_o := \{m + n\alpha \mid m, n \in \bZ, m - n\alpha \in [-1,1]\}.
\]
Then $P_o \subset \bR$ is a uniform approximate lattice such that the $\R$-action on $\Omega_{P_o}$ is uniquely ergodic and free. Moreover, let $F$ be a non-abelian free group with basis S and consider
\[\widetilde{p} := (\sum_{s \in S \cup S^{-1}}\delta_s) \otimes \lambda|_{[-1/2,1/2]} \in \Prob(F \times \bR) \qand p := \sum_{s \in S \cup S^{-1}}\delta_s \in \Prob(F).\]
We now define three subsets of $F \times \bR$ as follows: Firstly, let $\varphi_o: F \to \bZ$ be a counting quasimorphism and $Q_1 := P_o(\varphi_o)$. Secondly, let $\psi_o$ be the unique $p$-biharmonic quasimorphism equivalent to $\varphi_o$ and set $Q_2 := P_o(\psi_o)$. Finally, let $\phi_1: F \to \bZ$ be an antisymmetric quasimorphism equivalent to $\varphi_o$ such that $|\mathrm{Prob}^{\mathrm{erg}}_p(\Delta_{\varphi_1})| = 2^{\aleph_o}$ (cf.\ Theorem  \ref{ManyStationaryMeasures}) and let $Q_3 := P_o(\varphi_1)$. Then the following hold:
\begin{enumerate}
\item $Q_1$ and $Q_3$ are uniform approximate lattices in $F \times \bR$.
\item The hulls of $Q_1$, $Q_2$ and $Q_3$ do not admit any $F \times \bR$-invariant probability measures.
\item $\Omega_{Q_2}$ admits a unique $p$-stationary measure $\nu_2$ and $(\Omega_{Q_2}, \nu_2)$ is a relative isometric extension of a non-trivial $p$-boundary.
\item For every $p$-stationary probability measure $\nu_1$ on $\Omega_{Q_1}$ there is a measure-preserving measurable $(F \times \bR)$-equivariant map $(\Omega_{Q_1}, \nu_1) \to (\Omega_{Q_2}, \nu_2)$.
\item $\Omega_{Q_3}$ admits $2^{\aleph_o}$ different $p$-stationary $(F \times \bR)$-ergodic probability measures.
\end{enumerate}
\end{example}

\end{document}